\newtheorem{thm}{Theorem}[section]
\newtheorem{lem}[thm]{Lemma}
\newtheorem{prop}[thm]{Proposition}
\newtheorem{cor}[thm]{Corollary}
\theoremstyle{definition}
\newtheorem{definition}{Definition}[section]
\theoremstyle{remark}
\theoremstyle{remark}
\newtheorem{remark}{Remark}[section]
\numberwithin{equation}{section}
\newcommand{\N}{{\mathbb N}}
\newcommand{\R}{{\mathbb R}}
\definecolor{blu}{rgb}{0,0,1}
\title[Multiplicity result]{Multiplicity results in the non-coercive case for an elliptic problem
with critical growth in the gradient}
\author[C. De Coster]{Colette De Coster }
\address{Colette de Coster
\newline\indent
Universit\'e de Valenciennes et du Hainaut Cambr\'esis
\newline\indent
LAMAV,  FR CNRS 2956, 
\newline\indent
Institut des Sciences et Techniques de Valenciennes
\newline\indent
F-59313  Valenciennes Cedex 9, France}
\email{Colette.DeCoster@univ-valenciennes.fr}
\author[L. Jeanjean]{Louis Jeanjean}
\thanks{L. Jeanjean is supported by the project NONLOCAL (ANR-14-CE25-0013), funded by the French National Research Agency (ANR)}
\address{Louis Jeanjean
\newline\indent
Laboratoire de Math\'ematiques (UMR 6623)
\newline\indent
Universit\'{e} de Franche-Comt\'{e}
\newline\indent
16, Route de Gray 25030 Besan\c{c}on Cedex, France} 
\email{louis.jeanjean@univ-fcomte.fr}
\begin{document}
\subjclass[2010]{35J25, 35J62}

\keywords{Quasilinear elliptic equations, quadratic growth in the gradient, lower and upper solutions}

\begin{abstract}
We consider the boundary value problem
\begin{equation*}
- \Delta u = \lambda c(x)u+ \mu(x) |\nabla u|^2 + h(x), \qquad u \in H^1_0(\Omega) \cap L^{\infty}(\Omega),
\leqno{(P_{\lambda})}
\end{equation*}
where $\Omega \subset \R^N, N \geq 3$ is a bounded domain with smooth boundary. 
It is assumed that $c\gneqq 0$, $c,h$ belong to $L^p(\Omega)$ for some  $p > N$. Also  $\mu \in L^{\infty}(\Omega)$ and $\mu \geq \mu_1 >0$ for some $\mu_1 \in \R$. It is known that when $\lambda \leq 0$, problem $(P_{\lambda})$ has at most one solution. In this paper we study, under various assumptions, the structure of the set of solutions of  $(P_{\lambda})$  assuming that $\lambda>0$. Our study unveils the rich structure of this problem. We show, in particular, that what happen for $\lambda=0$ influences the set of solutions in all the half-space $]0,+\infty[\times(H^1_0(\Omega) \cap L^{\infty}(\Omega))$. Most of our results are valid without assuming that $h$ has a sign. If we require $h$ to have a sign, we observe that the set of solutions differs completely for $h\gneqq 0$ and $h\lneqq 0$. We also show  when $h$ has a sign  that solutions not having this sign may exists. Some uniqueness results of signed solutions are also derived. The paper ends with a list of open problems.
\end{abstract}

\maketitle

\section{Introduction}

We consider the boundary value problem
\begin{equation*}
- \Delta u = \lambda c(x)u+ \mu(x) |\nabla u|^2 + h(x), \qquad u \in H^1_0(\Omega) \cap L^{\infty}(\Omega) 
\leqno{(P_{\lambda})}
\end{equation*}
under the assumption
$$
\left\{ \begin{array}{c} 
\Omega \subset \R^N,\,\,  N \geq 2 \,\,\mbox{ is a bounded domain with } \partial \Omega
\mbox{  of class } C^{1,1},
\\[2mm]
c \mbox{ and } h \mbox{ belong to }  L^p(\Omega) \,\, \mbox{for some } p > N \mbox{ and satisfy } c\gneqq 0,
\\[2mm]
 \mu \in L^{\infty}(\Omega) \mbox{ satisfies } 0<\mu_1\leq \mu(x)\leq \mu_2.
\end{array}
\right.
\leqno{\mathbf{(A)}}
$$
\medbreak
Depending on the parameter $\lambda \in \R$ we study the existence and multiplicity of solutions of  $(P_{\lambda})$. By solutions we mean functions $u\in H_0^1(\Omega)\cap L^{\infty}(\Omega)$ satisfying 
$$
\int_\Omega \nabla u \nabla v \, dx
				= 
\lambda \int_\Omega c(x)  u  v \, dx
		+
\int_\Omega \mu(x)  |\nabla u |^2 v \, dx
		+
\int_\Omega h(x) v \, dx \, , 
$$
for any $v\in H_0^1(\Omega)\cap L^{\infty}(\Omega).$

First observe that, by the change of variable $v=-u$, problem $(P_{\lambda})$ reduces to
\begin{equation*}
- \Delta u = \lambda c(x)u - \mu(x) |\nabla u|^2 - h(x), \qquad u \in H^1_0(\Omega) \cap L^{\infty}(\Omega).  
\end{equation*}
Hence, since we make no assumptions on the sign of $h$, we actually also consider the case where $ |\nabla u|^2$ has a negative coefficient.
\medbreak

The study of quasilinear elliptic equations with a gradient dependence up to the critical growth $|\nabla u|^2$ was essentially initiated by
Boccardo, Murat and Puel in the 80's and it has been an active field of research until now. Under the condition
$\lambda c(x) \leq -\alpha_0 <0 $ a.e. in $\Omega$ for some $\alpha_0 >0$, which is usually referred to as the {\it coercive case},
the existence of a unique solution of $(P_{\lambda})$  is guaranteed by assumption (A). 
This is a special case of the results of \cite{BoMuPu1,BoMuPu3} for the existence and of \cite{BaBlGeKo,BaMu}  for the uniqueness.  

The limit case where one just require that $\lambda c(x) \leq 0$  a.e. in $\Omega$  is more complex. There had been a lot of contributions  \cite{AbDaPe, 
FeMu1, MaPaSa,Po} when   $\lambda = 0$ (or equivalently when $c\equiv 0$) but the general case where 
$\lambda c \leq 0 $  may vanish only on some parts of $\Omega$ was left open until the paper \cite{ArDeJeTa}. 
It appears in \cite{ArDeJeTa} that under assumption (A) the existence of solutions is not guaranteed, 
additional conditions are necessary. When $\lambda =0$ this was already observed in \cite{FeMu1}. 
By \cite{ArDeJeTa}, the uniqueness itself holds as soon as $\lambda c(x) \leq 0$ a.e. in $\Omega$. See also \cite{ArDeJeTa2}  
for a related uniqueness result in a more general frame.

The case $\lambda c \gneqq 0$ remained unexplored until very recently. Following the paper \cite{Si} which consider a particular case, Jeanjean and Sirakov \cite{JeSi} study a problem directly connected to $(P_{\lambda})$. In \cite[Theorem 2]{JeSi} assuming that $\mu$ is a positive constant and $h$ is small (in an appropriate sense) but without sign condition, a $\lambda_0 >0$ is given under which $(P_{\lambda})$ has two solutions whenever $\lambda \in \,]0, \lambda_0[$. This result have been complemented in \cite{HuJe} where two solutions are obtained, 
allowing the function $c$ to change sign but assuming that $h \geq 0$ and that  $\max\{0, \lambda c\} \gneqq  0$. The restriction that $\mu$ is a constant was subsequently removed in \cite{ArDeJeTa} under the price  of the assumption $h \geq 0$.

If multiplicity results can be observed in case  $\lambda c \gneqq 0$, the existence of solution itself may  fail. In \cite[Lemma 6.1]{ArDeJeTa}, letting $\gamma_1 >0$ be the first eigenvalue of 
\begin{equation}
\label{eigenvaluep}
-\Delta \varphi_{1} = \gamma c (x) \varphi_{1},  \quad \varphi_1 \in H^1_0(\Omega),
\end{equation}
it is proved when $h \geq 0$ that problem $(P_{\lambda})$ has no solution when $\lambda = \gamma_1$ and no non-negative solutions when $\lambda > \gamma_1$. 
This contrasts to what was observed  in \cite[Theorem 3.3]{AbPePr}, namely that if $\mu >0$ is a constant and  $h\lneqq 0$, then there exists a negative solution of 
$(P_{\lambda})$ as soon as $\lambda>0$. In addition this negative solution is unique \cite[Theorem 3.12]{AbPePr}. Considered together, the results of \cite{AbPePr,ArDeJeTa} 
show that the sign of $h$ has definitely an influence on the set of solutions of 
$(P_{\lambda})$ when $\lambda >0$. 
\medskip

Despite the works \cite{AbPePr,ArDeJeTa,HuJe, JeSi}, having a clear picture of the set of solutions of $(P_{\lambda})$ in the half-space 
$]0,+\infty[\times(H^1_0(\Omega) \cap L^{\infty}(\Omega))$ is still widely open. The present paper aims to be a contribution in that direction. 
Note that both in \cite{AbPePr} and \cite{ArDeJeTa} the main results (under this assumption) are obtained assuming that $h$ has a sign, positive in \cite{ArDeJeTa}, 
negative in \cite{AbPePr} and then these papers  look for solutions having the same sign as $h$. In our paper we remove in particular the assumption that $h$ has a sign.
Also we show that even when $h$ has a sign, solutions not having this sign may exist. \medskip

We point out that with respect to \cite{AbPePr, ArDeJeTa} we have strengthened our regularity assumptions by requiring  $c$ and $h$ in $L^p(\Omega)$ 
for some $p>N$ while in  \cite{ArDeJeTa},  $c$ and $h$ are in $L^p(\Omega)$ for some   $p> \frac{N}{2}$ and in \cite{AbPePr}, the regularity assumptions are even weaker. 
Under our assumptions all solutions of $(P_{\lambda})$ lies in $W^{2,p}_0(\Omega) \subset C^1_0(\overline{\Omega})$ (see Theorem \ref{regularity}).  
This permits to use lower and upper solutions arguments together with degree theory.  Now for future reference we recall,

\begin{definition}
Let $u,v \in C(\overline{\Omega})$.  We say that 
\\
$\bullet$ $u\leq v$ if, for all $x\in \Omega$, $u(x)\leq v(x)$;
\\
$\bullet$ $u\lneqq v$ if, for all $x\in \Omega$, $u(x)\leq v(x)$ and $u\not\equiv v$;
\\
$\bullet$ $u < v$ if, for all $x\in \Omega$,  $u(x) < v(x)$.
\end{definition}

Let $\varphi_1$ be the first eigenfunction of \eqref{eigenvaluep}. We know that, for all $x\in \Omega$,  $\varphi_1(x) > 0$ and, for $x \in \partial \Omega$,
$\frac{\partial \varphi_1}{\partial \nu} (x)< 0$ where $\nu$ denotes the exterior unit normal.

\begin{definition}
Let $u,v \in C(\overline{\Omega})$.  We say that 
\\
$\bullet$ $u \ll v$ in case there exists $\varepsilon>0$ such that, for all $x\in \overline\Omega$, $v(x)-u(x)\geq \varepsilon \varphi_1(x)$.
\end{definition}

\begin{remark}
Observe that, in case $u,v \in C^1(\overline{\Omega})$, the definition of  $u \ll v$ is equivalent to: for all $x\in \Omega$,  $u(x) < v(x)$
 and, for $x \in \partial \Omega$,
either $u(x) < v(x)$ or $u(x) = v(x)$ and $\frac{\partial u}{\partial \nu} (x)
> \frac{\partial v}{\partial \nu} (x)$.
\end{remark}

Recall that by \cite[Theorems 1.2 and 1.3]{ArDeJeTa}, we have the following result relying on \cite[Theorem 3.2]{Ra}.

\begin{thm} 
\label{ADJT1}
Under assumption $(A)$, for $\lambda\leq 0$ the problem  $(P_{\lambda})$ has at most one solution $u_{\lambda}$. Moreover, in case $(P_{0})$ has a solution $u_0$, then 
$$
\Sigma = \{ (\lambda, u) \in \R \times C(\overline \Omega) 
\mid (\lambda, u) \mbox{ solves } (P_{\lambda})\},
$$
possesses one unbounded component ${\mathcal C}^+$ in $[0, + \infty[ \times C(\overline \Omega)$ such that ${\mathcal C}^+\cap (\{0\}\times C(\overline\Omega))=\{u_0\}$.

In case $h\gneqq  0$,  this continuum 
${\mathcal C}^+$ consists of non-negative functions and
its projection  $\mbox{\rm Proj}_{\R} {\mathcal C}^+$ 
on the $\lambda$-axis is an interval  $]-\infty,\overline \lambda] \subset {]-\infty,\gamma_1[}$ containing $\lambda =0$  and  
${\mathcal C}^+ \subset \Sigma$ bifurcates from infinity to the right of the axis $\lambda = 0$. 
\end{thm}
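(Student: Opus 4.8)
The plan is to prove this as a consequence of global bifurcation theory applied to a suitably reformulated, compact fixed-point problem. The crucial observation — already exploited in \cite{ArDeJeTa} — is that the exponential (Cole--Hopf type) change of unknown $v = \frac{1}{\mu_1}(e^{\mu_1 u}-1)$, or more precisely a change adapted to the variable coefficient $\mu(x)$, transforms the quadratic-gradient term into a lower-order term, so that $(P_\lambda)$ becomes equivalent to a semilinear problem of the form $-\Delta v = \lambda\, \tilde c(x,v) + \tilde h(x,v)$ whose solution operator $v \mapsto (-\Delta)^{-1}[\,\cdot\,]$ is compact on $C(\overline\Omega)$ thanks to the $L^p$, $p>N$, regularity recalled in Theorem \ref{regularity} and the embedding $W^{2,p}_0(\Omega)\hookrightarrow C^1_0(\overline\Omega)$. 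In this formulation $v=0$ corresponds to $u\equiv 0$ when $h\equiv 0$, but in general we have a fixed known solution $u_0$ of $(P_0)$ which, after the change of variable, is a solution $v_0$ at $\lambda=0$.

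First I would set up the functional-analytic frame: write $(P_\lambda)$ as $u = T(\lambda,u)$ with $T$ completely continuous on $\R\times C(\overline\Omega)$, using the a priori $W^{2,p}$ bounds to get compactness, and record that for $\lambda\le 0$ uniqueness holds (this is the already-cited part, relying on \cite{ArDeJeTa,Ra}). Next, to produce the continuum ${\mathcal C}^+$, I would apply the Rabinowitz-type global bifurcation/continuation theorem (\cite[Theorem 3.2]{Ra}) in the neighbourhood of the known solution $(0,u_0)$: since $(P_0)$ has the solution $u_0$ and, by uniqueness for $\lambda\le0$, the solution set near $\lambda=0$ cannot escape to the left in a bounded way, there must be a connected component ${\mathcal C}^+$ of $\Sigma$ emanating from $(0,u_0)$ that is unbounded in $[0,+\infty[\times C(\overline\Omega)$ and meets $\{0\}\times C(\overline\Omega)$ only at $u_0$. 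The key inputs here are: (i) a degree-theoretic nontriviality at $(0,u_0)$, obtained from the fact that the linearization of the (transformed) problem at $u_0$ for $\lambda$ slightly positive is invertible with a degree jump, and (ii) the a priori bound that confines ${\mathcal C}^+$ to the right half-space, i.e. that solutions cannot blow up at a finite $\lambda\le0$ — which again is exactly the uniqueness/continuation statement of \cite{ArDeJeTa}.

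Finally, under the extra assumption $h\gneqq 0$, I would refine the description. That ${\mathcal C}^+$ consists of non-negative functions follows from a maximum-principle/lower-solution argument: $u\equiv 0$ is a strict lower solution of $(P_\lambda)$ for every $\lambda\ge 0$ when $h\gneqq0$, hence along the connected branch starting at $u_0\gneqq 0$ solutions stay $\ge 0$ (sign cannot change on a connected set of solutions without crossing $0$, which the strong maximum principle excludes). To locate $\mbox{\rm Proj}_\R {\mathcal C}^+$, I would invoke \cite[Lemma 6.1]{ArDeJeTa}: there are no non-negative solutions for $\lambda>\gamma_1$ and none at $\lambda=\gamma_1$, so $\mbox{\rm Proj}_\R{\mathcal C}^+\subset\,]-\infty,\gamma_1[$; combined with $0\in\mbox{\rm Proj}_\R{\mathcal C}^+$, unboundedness of ${\mathcal C}^+$, and $C^1$-a priori bounds on any compact $\lambda$-interval away from $\gamma_1$ (so the branch cannot be unbounded in the $u$-direction at a fixed $\lambda<\gamma_1$ unless it runs to $\lambda=0^+$), one deduces that the projection is a full interval $]-\infty,\overline\lambda]$ with $0\le\overline\lambda<\gamma_1$ and that ${\mathcal C}^+$ is unbounded in the $u$-direction as $\lambda\to 0^+$, i.e. it bifurcates from infinity to the right of $\lambda=0$.

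I expect the main obstacle to be step (i)–(ii) of the second paragraph: establishing that the global continuum genuinely emanates from $(0,u_0)$ and is forced into $\lambda\ge0$. The delicate point is that $(0,u_0)$ need not be a bifurcation point in the classical (Crandall--Rabinowitz) sense — the linearized operator at $u_0$ may be invertible — so the connectedness has to come from a continuation argument: using that the topological degree of $I-T(\lambda,\cdot)$ on a small ball around $u_0$ is nonzero and stable as $\lambda$ varies, together with the \emph{uniqueness} result for $\lambda\le0$ to exclude a bounded component sitting entirely in $\{\lambda\le0\}$, forcing the component through $(0,u_0)$ to be unbounded on the $\lambda\ge0$ side. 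Making this rigorous is precisely where \cite[Theorem 3.2]{Ra} and the a priori estimates do the work, and checking their hypotheses (in particular the compactness and the no-blow-up-at-finite-$\lambda\le0$ statement) is the technical heart of the argument.
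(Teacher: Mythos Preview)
This theorem is not proved in the present paper: it is explicitly recalled from \cite[Theorems 1.2 and 1.3]{ArDeJeTa}, with the remark that those results rely on \cite[Theorem 3.2]{Ra}. So there is no ``paper's own proof'' to compare against here; you are in effect sketching what \cite{ArDeJeTa} does.

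That said, your outline has the right architecture --- compact fixed-point reformulation, Rabinowitz-type continuation through $(0,u_0)$, uniqueness for $\lambda\le 0$ to force the branch to the right, and maximum-principle arguments when $h\gneqq 0$ --- but there is a concrete error in the $h\gneqq 0$ part. You claim ``$C^1$-a priori bounds on any compact $\lambda$-interval away from $\gamma_1$'' and then conclude the branch must blow up as $\lambda\to 0^+$. This is backwards: the a priori bounds (see Theorem~\ref{bounds1} in this paper, or the corresponding estimate in \cite{ArDeJeTa}) hold on intervals $[\Lambda_1,\Lambda_2]$ with $\Lambda_1>0$, i.e.\ they hold \emph{away from $\lambda=0$}, not away from $\gamma_1$. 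It is precisely the failure of the bound as $\lambda\to 0^+$ (the constant $M$ behaves like $O(1/\Lambda_1)$) that permits --- and, combined with the nonexistence for $\lambda\ge\gamma_1$, forces --- the branch to bifurcate from infinity to the right of $\lambda=0$. As written, your argument would place the blow-up near $\gamma_1$ rather than near $0$, contradicting the conclusion you are trying to reach.

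A second, smaller point: the Cole--Hopf change ``adapted to the variable coefficient $\mu(x)$'' does not actually eliminate the quadratic gradient term when $\mu$ is nonconstant; one obtains instead differential inequalities (as in the proof of Theorem~\ref{bounds1}) rather than a clean semilinear equation. The compactness of the fixed-point operator is obtained directly from the $W^{2,p}$ regularity (Theorem~\ref{regularity}) without needing a full transformation.
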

%

\begin{remark}
From \cite[Corollary 3.2]{ArDeJeTa}, we know that $(P_0)$  has a solution if
\begin{equation}
\label{cond ArDeJeTa}
\displaystyle \inf_{ \{ u \in H^1_0(\Omega) \mid \,\,\|u\|_{H^1_0(\Omega)}=1 \}} \,  \displaystyle \int_{\Omega} \left(|\nabla u|^2 -  \mu_2 h^+(x) u^2 \right) dx >0,
\end{equation}
where $h^+ = \max\{0,h\}$.
\end{remark}
 
Our first main result gives informations on the behaviour of this continuum without assuming  that $h \gneqq 0$.

\begin{thm}
\label{thm 0}
Under assumption $(A)$, in case $(P_{0})$ has a solution, the continuum  ${\mathcal C}^+$ of Theorem \ref{ADJT1} satisfies  one of the two cases :
\begin{enumerate}
\item[(i)] it bifurcates  from infinity to the right of the axis $\lambda = 0$ with the corresponding solutions having a positive part blowing up to infinity
 as $\lambda \to 0^+$; 
\item[(ii)] it is such that 
its projection  $\mbox{\rm Proj}_{\R} {\mathcal C}^+$ 
on the $\lambda$-axis is $[0,+\infty[$.
\end{enumerate}
\end{thm}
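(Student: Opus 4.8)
The plan is to argue by a dichotomy on the projection $J:=\mathrm{Proj}_{\R}\,{\mathcal C}^+$. By Theorem \ref{ADJT1} the set ${\mathcal C}^+$ is an unbounded connected subset of $[0,+\infty[\times C(\overline\Omega)$ which meets $\{0\}\times C(\overline\Omega)$ only at $u_0$, so $J$ is an interval of $[0,+\infty[$ containing $0$. If $J$ is unbounded then $J=[0,+\infty[$ and we are in case (ii). Hence it suffices to show that if $J$ is bounded, say $\Lambda:=\sup J<+\infty$, then case (i) holds; note that then ${\mathcal C}^+\subset[0,\Lambda]\times C(\overline\Omega)$ and, being unbounded, it must contain solutions of arbitrarily large $C(\overline\Omega)$-norm.

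First I would establish a uniform lower bound forcing any blow-up into the positive part. Fixing $\varepsilon\in\,]0,\gamma_1[$, for $\lambda\in[0,\gamma_1-\varepsilon]$ the operator $-\Delta-\lambda c$ obeys the maximum principle, so $-\Delta\Psi_\lambda-\lambda c\,\Psi_\lambda=h^-$ in $\Omega$, $\Psi_\lambda=0$ on $\partial\Omega$ (with $h^-=\max\{0,-h\}$) has a unique solution $\Psi_\lambda\geq 0$, with $\|\Psi_\lambda\|_{L^\infty(\Omega)}\leq M_0$ independent of $\lambda$. If $u$ solves $(P_\lambda)$, then $v:=-u$ satisfies $-\Delta v=\lambda c\,v-\mu|\nabla v|^2-h$, so on $O:=\{u<0\}$ one has $-\Delta v-\lambda c\,v\leq h^-$; comparing $v$ with $\Psi_\lambda$ on $O$ (where $\Psi_\lambda-v=\Psi_\lambda\geq 0$ on $\partial O$) gives $v\leq\Psi_\lambda$ on $O$, i.e.\ $u\geq-\Psi_\lambda$ in $\Omega$, whence $\|u^-\|_{L^\infty(\Omega)}\leq M_0$ whenever $\lambda\in[0,\gamma_1-\varepsilon]$.

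The decisive ingredient, and the one I expect to be the main difficulty, is an a priori bound away from $\lambda=0$: for every $0<\delta\leq\Lambda$ there should be $R(\delta)>0$ with $\|u\|_{C(\overline\Omega)}\leq R(\delta)$ for all solutions $u$ of $(P_\lambda)$ with $\lambda\in[\delta,\Lambda]$. I would obtain this by a Hopf--Cole-type change of unknown — carried out with the bounds $\mu_1\leq\mu\leq\mu_2$ and differential inequalities, since $\mu$ is nonconstant — reducing matters to an a priori estimate for a semilinear problem with growth of order $s\log s$, strictly below the critical Sobolev exponent, for which Brezis--Turner/Gidas--Spruck type bounds apply; the possible vanishing of $c$ would be controlled through the $L^p$, $p>N$, integrability of $c,h$ and the $C^1$-regularity of Theorem \ref{regularity}, and estimates of this kind are available from \cite{ArDeJeTa}.

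With these two bounds in hand the conclusion is soft. For each $\delta>0$ the piece ${\mathcal C}^+\cap\big([\delta,\Lambda]\times C(\overline\Omega)\big)$ is bounded, so, ${\mathcal C}^+$ being unbounded, ${\mathcal C}^+\cap\big([0,\delta]\times C(\overline\Omega)\big)$ is unbounded for every $\delta>0$. Taking $\delta_n\downarrow 0$ and $(\lambda_n,u_n)\in{\mathcal C}^+$ with $\lambda_n\in[0,\delta_n]$ and $\|u_n\|_{C(\overline\Omega)}\geq n$, one gets $\lambda_n\to 0$ and $\|u_n\|_{C(\overline\Omega)}\to+\infty$. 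For $n$ large $\lambda_n<\gamma_1$, hence $\|u_n^-\|_{L^\infty(\Omega)}\leq M_0$ and therefore $\|u_n^+\|_{L^\infty(\Omega)}=\|u_n\|_{C(\overline\Omega)}\to+\infty$; also $\lambda_n\neq 0$ for $n$ large, since $\lambda_n=0$ would give $u_n=u_0$ by Theorem \ref{ADJT1}. Thus ${\mathcal C}^+$ bifurcates from infinity to the right of $\lambda=0$ with the positive parts of the corresponding solutions blowing up as $\lambda\to0^+$, i.e.\ case (i), completing the dichotomy.
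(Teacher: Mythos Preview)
Your proof is correct and follows the same architecture as the paper's: a dichotomy on $\mathrm{Proj}_{\R}\,{\mathcal C}^+$, combined with (a) an a priori bound on $[\Lambda_1,\Lambda_2]$ for any $0<\Lambda_1<\Lambda_2$ (the paper's Theorem~\ref{bounds1}, proved exactly via the Hopf--Cole type change of variable you outline) and (b) a uniform lower bound near $\lambda=0$, forcing any blow-up into $u^+$.

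The only real difference is in how you obtain the lower bound. The paper's Lemma~\ref{lem 4.1} tests the equation against $u^-$ and exploits $\mu\geq\mu_1>0$ to control $\int|\nabla(u^-)^{3/2}|^2$, yielding a bound valid on all of $[0,\Lambda_2]$. Your comparison with the auxiliary linear problem $-\Delta\Psi_\lambda-\lambda c\,\Psi_\lambda=h^-$ is correct (and can be streamlined: set $z=u+\Psi_\lambda$, then $-\Delta z-\lambda c\,z=\mu|\nabla u|^2+h^+\geq 0$ in $\Omega$ with $z=0$ on $\partial\Omega$, so $z\geq0$ for $\lambda<\gamma_1$ without restricting to $O$). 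Your version is restricted to $\lambda<\gamma_1$, but since you only apply it along $\lambda_n\to0$ this suffices. Interestingly, your argument uses only $\mu\geq0$, whereas the paper's lower bound genuinely needs $\mu\geq\mu_1>0$; on the other hand the paper's version is valid for all $\lambda\in[0,\Lambda_2]$, which is what is needed elsewhere (e.g.\ in Lemma~\ref{non ex} and Corollary~\ref{cor 4.1b}).
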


In Corollary \ref{Cor 4.1} below, we show that we are in situation (i) of Theorem \ref{thm 0} if $(P_0)$ has a solution and
$$
\int_{\Omega} h \,\varphi_1 \geq 0.
$$

In \cite[Theorem 2]{JeSi} under conditions insuring that $(P_0)$ has a solution it was proved, assuming that $\mu$ is a constant, that $(P_{\lambda})$ has two solutions 
for $\lambda >0$ small. Here we remove this restriction on $\mu$.

\begin{thm}
\label{thmlocal}
 Under assumption $(A)$ and assuming that $(P_0)$ has a solution $u_0$, there exists a $\overline\lambda \in \,]0,+\infty]$ such that 
\begin{enumerate}
\item[(i)] for every $\lambda \in \,]0,\overline\lambda[$, the problem $(P_{\lambda})$ has at least two solutions with 
\\
$\bullet$
$u_{\lambda,1} \ll u_{\lambda,2}$;
\\
$\bullet$
$\displaystyle\max_{\overline\Omega} u_{\lambda,2}\to + \infty$ and $u_{\lambda,1}\to u_0$ in $C^1_0(\overline\Omega)$ as $\lambda \to 0$;
\item[(ii)] if $\overline\lambda < + \infty$,    the problem $(P_{\overline\lambda})$ has exactly one solution $u$. 
\end{enumerate} 
\end{thm}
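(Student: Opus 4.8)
The plan is to obtain the two solutions as a ``small'' one, living near $(\lambda,u)=(0,u_0)$ and found by a local argument, and a ``large'' one, found by comparing a small lower solution with a suitably large upper solution; the number $\overline\lambda$ will be the threshold beyond which the large upper solution (equivalently, the two‑solution structure) disappears, and part (ii) will come from a compactness argument at $\lambda=\overline\lambda$.

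For the small solution I would linearise $(P_0)$ at $u_0$. By Theorem \ref{regularity} all solutions lie in $W^{2,p}_0(\Omega)\subset C^1_0(\overline\Omega)$, so the map $F(\lambda,u):=-\Delta u-\lambda c u-\mu|\nabla u|^2-h$ is of class $C^1$ from $\R\times W^{2,p}_0(\Omega)$ to $L^p(\Omega)$, and its partial differential $\partial_u F(0,u_0)v=-\Delta v-2\mu(x)\,\nabla u_0\cdot\nabla v$ is a uniformly elliptic operator with bounded drift and no zero order term, hence obeys the maximum principle and is an isomorphism $W^{2,p}_0(\Omega)\to L^p(\Omega)$. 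The implicit function theorem then yields a $C^1$ branch $\lambda\mapsto u_{\lambda,1}$ of solutions of $(P_\lambda)$ with $u_{0,1}=u_0$, so $u_{\lambda,1}\to u_0$ in $C^1_0(\overline\Omega)$ and $\max_{\overline\Omega}u_{\lambda,1}\to\max_{\overline\Omega}u_0$ as $\lambda\to 0$. Equivalently this branch is obtained by a localised lower/upper solution argument, squeezing a solution between the strict lower solution $\underline u_\lambda:=u_0-\lambda\phi$ and the strict upper solution $\overline v_\lambda:=u_0+\lambda\phi'$, where $\phi,\phi'\in W^{2,p}_0(\Omega)$ solve linear problems $\partial_uF(0,u_0)\phi=f$, $\partial_uF(0,u_0)\phi'=f'$ with $f,f'\gneqq 0$ large enough that the first order expansion of $(P_\lambda)$ about $u_0$ has the right sign and the $O(\lambda^2)$ remainders are absorbed; then $\underline u_\lambda,\overline v_\lambda\to u_0$ in $C^1_0(\overline\Omega)$ and, since $\overline v_\lambda-\underline u_\lambda=\lambda\,(\partial_uF(0,u_0))^{-1}(f+f')$ with $f+f'\gneqq 0$, the strong maximum principle and Hopf's lemma give $\underline u_\lambda\ll\overline v_\lambda$. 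This second point of view is the one suited to the comparison arguments below.

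For the large solution I would build, for each small $\lambda>0$, a strict upper solution $\overline u_\lambda$ of $(P_\lambda)$ with $\max_{\overline\Omega}\overline u_\lambda\to+\infty$ as $\lambda\to 0$. Since $\mu$ need not be constant, the exponential substitution of \cite{JeSi} is not available; instead I would use only $\mu\le\mu_2$ and try $\overline u_\lambda=G_\varepsilon(\psi)$, with $\psi$ solving an auxiliary linear problem $-\Delta\psi=1+|h|$ in $\Omega$, $\psi\in H^1_0(\Omega)$, and $G_\varepsilon$ the increasing concave function with $G_\varepsilon'(s)=(\mu_2 s+\varepsilon)^{-1}$, $G_\varepsilon(0)=0$, so that the identity $-G_\varepsilon''-\mu\,(G_\varepsilon')^2=(\mu_2-\mu)(G_\varepsilon')^2\ge 0$ neutralises the quadratic gradient term (a further bounded correction may be needed for the lower order terms). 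Since $G_\varepsilon(\psi)\to+\infty$ on compact subsets of $\Omega$ while $\max_{\overline\Omega}G_\varepsilon(\psi)\sim\mu_2^{-1}\log(1/\varepsilon)$, choosing $\varepsilon=\varepsilon(\lambda)\to 0$ with $\lambda\log(1/\varepsilon(\lambda))\to 0$ renders the obstruction term $\lambda c\,\overline u_\lambda$ negligible, so $\overline u_\lambda$ is a genuine strict upper solution with $\max_{\overline\Omega}\overline u_\lambda\to+\infty$, and $\underline u_\lambda\ll\overline u_\lambda$, $\overline v_\lambda\ll\overline u_\lambda$ for $\lambda$ small. A second solution with $u_{\lambda,1}\ll u_{\lambda,2}\le\overline u_\lambda$ is then extracted by a Leray–Schauder degree argument in the order interval $[\underline u_\lambda,\overline u_\lambda]$ (whose degree equals $1$) relative to the sub‑interval around $u_{\lambda,1}$, the additional solution being detected by the difference of degrees or, when that vanishes, furnished by the ``upper arm'' of the continuum ${\mathcal C}^+$ of Theorems \ref{ADJT1}–\ref{thm 0}. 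The blow‑up $\max_{\overline\Omega}u_{\lambda,2}\to+\infty$ is immediate in case (i) of Theorem \ref{thm 0}; in case (ii) it follows by contradiction, since if $u_{\lambda,2}$ stayed bounded along some $\lambda_n\to 0$, the a priori bounds together with the arbitrarily large upper solutions $\overline u_{\lambda_n}$ would, by a degree computation in the growing solution‑free region between $u_{\lambda_n,2}$ and $\overline u_{\lambda_n}$, force a further solution of arbitrarily large norm. Finally, defining $\overline\lambda\in\,]0,+\infty]$ as the supremum of the $\lambda>0$ such that, for all $\lambda'\in\,]0,\lambda[$, $(P_{\lambda'})$ has a solution lying strictly below some strict upper solution, a comparison/continuation argument shows that this set is an interval on which both solutions persist; and if $\overline\lambda<+\infty$, the a priori bounds valid up to $\overline\lambda$ force $u_{\lambda,1}$ and $u_{\lambda,2}$ to converge, as $\lambda\uparrow\overline\lambda$, to the same solution of $(P_{\overline\lambda})$, which is then the only one, because a second solution of $(P_{\overline\lambda})$ would, via the same comparison argument, propagate the two‑solution structure slightly beyond $\overline\lambda$.

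The main obstacle is twofold: constructing the large upper solution with a blow‑up rate slow enough to defeat the term $\lambda c\,\overline u_\lambda$ that is responsible for non‑existence for large $\lambda$, which the non‑constant $\mu$ prevents us from doing by the clean change of variables of \cite{JeSi}; and showing that the degree/continuum argument truly produces a \emph{second}, distinct solution with $\max_{\overline\Omega}u_{\lambda,2}\to+\infty$ in \emph{both} alternatives of Theorem \ref{thm 0}, together with the sharp ``exactly one solution'' assertion at $\lambda=\overline\lambda$.
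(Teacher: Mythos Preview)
Your local argument for the small solution $u_{\lambda,1}$ (implicit function theorem, or equivalently a lower/upper solution pair collapsing onto $u_0$) is fine and morally equivalent to the paper's Steps~1--2. The real gap is in how you obtain the \emph{second} solution.

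Your plan is to sandwich everything between a small strict lower solution $\underline u_\lambda$ and a \emph{large} strict upper solution $\overline u_\lambda$, compute degree $1$ on that big order interval, degree $1$ on the small sub-interval around $u_{\lambda,1}$, and read off a second solution from the difference. But that difference is $1-1=0$, so the degree argument yields nothing; you concede this (``or, when that vanishes\ldots'') and fall back on the continuum ${\mathcal C}^+$ of Theorem~\ref{thm 0}. That fallback does not work either: in alternative~(ii) the continuum merely projects onto $[0,+\infty[$ and may well consist of a single solution for each $\lambda$; in alternative~(i) it gives a large second solution only for $\lambda$ \emph{near} $0$, not for every $\lambda\in\,]0,\overline\lambda[$. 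So neither mechanism produces the second solution in general, and the construction of the large upper solution $\overline u_\lambda=G_\varepsilon(\psi)$ --- which you admit is incomplete for the lower order terms --- is in fact irrelevant.

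The paper's mechanism is different and is the point you are missing. One does not build a large upper solution at all. Instead, the a~priori bound of Theorem~\ref{bounds1} confines \emph{all} solutions of $(P_\lambda)$ (and of the one-parameter deformation $-\Delta u=\lambda cu+\mu|\nabla u|^2+h+ac$, $a\in[0,A_1]$) to a fixed ball $B(0,R_0)\subset C^1_0(\overline\Omega)$. By Lemma~\ref{non ex} the deformed problem has no solution for $a=A_1$, so the total degree $\deg(I-{\mathcal M}_\lambda,B(0,R_0))$ equals $0$, not $1$. Since $\deg(I-{\mathcal M}_\lambda,{\mathcal S})=1$ on the small set ${\mathcal S}$ around $u_0$, excision gives $\deg(I-{\mathcal M}_\lambda,B(0,R_0)\setminus{\mathcal S})=-1$, hence a second solution $u_{\lambda,2}\notin{\mathcal S}$. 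The ordering $u_{\lambda,1}\ll u_{\lambda,2}$ then comes from a convexity trick you did not mention: $\tfrac12(u_{\lambda,1}+u_{\lambda,2})$ is a strict upper solution of $(P_\lambda)$, forcing $u_{\lambda,1}\ll\tfrac12(u_{\lambda,1}+u_{\lambda,2})$ by minimality of $u_{\lambda,1}$. Finally the blow-up of $u_{\lambda,2}$ is obtained cleanly from uniqueness of the solution of $(P_0)$: if $\{u_{\lambda_n,2}\}$ stayed bounded it would converge (by compactness) to a solution of $(P_0)$ distinct from $u_0$ (since $u_{\lambda_n,2}\notin{\mathcal S}$ while $u_{\lambda_n,1}\to u_0$), a contradiction. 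Your definition of $\overline\lambda$ and your sketch for~(ii) can be made to work once the degree-$0$ / degree-$(-1)$ picture is in place, but not before.
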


Next we show that having a sign information on the solution $u_0$  of  $(P_0)$ allows us to give more precise informations on the set of solutions of $(P_{\lambda})$ when $\lambda >0$.

\begin{thm}
\label{thm 1}
Under assumption $(A)$  and assuming that $(P_0)$ has a solution $u_0\geq0$ with $cu_0 \gneqq 0$, every non-negative solution of $(P_{\lambda})$ with $\lambda >0$ satisfies 
$u \gg u_0$. Moreover, there exists $\overline\lambda\in \,]0,+\infty[$ such that
\begin{enumerate}
\item[(i)] for every $\lambda \in \,]0,\overline\lambda[$, the problem $(P_{\lambda})$ has at least two solutions with 
\\
$\bullet$
$0\leq u_0 \ll u_{\lambda,1} \ll u_{\lambda,2}$;
\\
$\bullet$
if $\lambda_1< \lambda_2$, we have $u_{\lambda_1,1}\ll u_{\lambda_2,1}$;
\\
$\bullet$
$\displaystyle\max_{\overline\Omega} u_{\lambda,2}\to + \infty$ and $u_{\lambda,1}\to u_0$ in $C^1_0(\overline\Omega)$ as $\lambda \to 0$;
\item[(ii)] the problem $(P_{\overline\lambda})$ has exactly one non-negative solution $u$;
\item[(iii)] for every $\lambda>\overline\lambda$, the problem $(P_{\lambda})$ has no {\rm non-negative} solution. 
\end{enumerate} 
\end{thm}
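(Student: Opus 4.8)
The plan is to combine the continuum ${\mathcal C}^+$ of Theorem \ref{ADJT1} with a priori bounds and sub/supersolution arguments, exploiting the sign condition on $u_0$. First I would establish the comparison statement: if $u\geq 0$ solves $(P_\lambda)$ with $\lambda>0$, then $u\gg u_0$. The idea is that $u$ itself is a supersolution of $(P_0)$ (since $\lambda c u\geq 0$ and the dropped term is nonnegative where $cu\gneqq 0$), while $u_0$ solves $(P_0)$; a strong comparison principle for this quasilinear operator — available because $cu_0\gneqq 0$ forces strict ordering somewhere, and the term $\mu|\nabla u|^2$ can be absorbed via the Hopf-type lemma after the usual change of variables $w=\frac{1}{\mu_1}(1-e^{-\mu_1 u})$ linearizing the gradient term — then upgrades $u\geq u_0$ to $u\gg u_0$. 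The technical point is that one must first know $u\geq u_0$; this follows because $u_0$ is then a subsolution of $(P_\lambda)$ lying below the supersolution $u$ (or by a direct test-function argument on $(u_0-u)^+$ after the exponential change of variables, using $\lambda c u_0\geq 0$).

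Next I would set $\overline\lambda := \sup\{\lambda>0 : (P_\lambda) \text{ has a non-negative solution}\}$ and show $\overline\lambda<+\infty$. This is exactly the non-existence phenomenon already recorded in the introduction via \cite[Lemma 6.1]{ArDeJeTa}: comparing with the first eigenvalue $\gamma_1$ of \eqref{eigenvaluep}, testing the equation against $\varphi_1$ and discarding the nonnegative gradient term shows no nonnegative solution can exist for $\lambda$ too large (one gets $\lambda\int c u\varphi_1 \le \int \nabla u\nabla\varphi_1 - \int h\varphi_1 = \gamma_1\int cu\varphi_1 - \int h\varphi_1$, and since $\int cu\varphi_1>0$ once $u\gg u_0$ and $cu_0\gneqq 0$, this bounds $\lambda$). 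This gives (iii) for $\lambda>\overline\lambda$ immediately.

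For (i), I would run the sub/supersolution plus degree machinery along the continuum. The subsolution is $u_0$ (or $u_{\lambda,1}$ already constructed for smaller $\lambda$, giving the monotonicity $u_{\lambda_1,1}\ll u_{\lambda_2,1}$); a supersolution for small $\lambda$ comes from perturbing $u_0$, or from the fact that $(P_\lambda)$ for $\lambda$ near $0$ inherits one from $(P_0)$ — here I would invoke the local analysis underlying Theorem \ref{thmlocal}, which already produces $u_{\lambda,1}\ll u_{\lambda,2}$ with $u_{\lambda,1}\to u_0$ in $C^1_0(\overline\Omega)$ and $\max u_{\lambda,2}\to+\infty$. Combining with the comparison $u\gg u_0$ just proved and the ordering $u_0\geq 0$ gives the chain $0\leq u_0\ll u_{\lambda,1}\ll u_{\lambda,2}$. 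The monotonicity in $\lambda$ of the minimal solution $u_{\lambda,1}$ is obtained because, for $\lambda_1<\lambda_2$, the solution $u_{\lambda_1,1}$ is a strict subsolution of $(P_{\lambda_2})$ (the extra term $(\lambda_2-\lambda_1)c\, u_{\lambda_1,1}\gneqq 0$ since $u_{\lambda_1,1}\gg u_0$ and $cu_0\gneqq 0$), so the minimal solution of $(P_{\lambda_2})$ above it satisfies $u_{\lambda_2,1}\gg u_{\lambda_1,1}$.

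Finally, (ii): at $\lambda=\overline\lambda$ I would pass to the limit along a sequence $\lambda_n\uparrow\overline\lambda$ of minimal solutions $u_{\lambda_n,1}$, which are monotone increasing; the a priori $W^{2,p}$ bounds (uniform on compact $\lambda$-intervals, from Theorem \ref{regularity} combined with an $L^\infty$ bound coming from the comparison with the two ordered solutions) give convergence in $C^1_0(\overline\Omega)$ to a non-negative solution of $(P_{\overline\lambda})$, so existence at $\overline\lambda$ holds. Uniqueness of the non-negative solution at $\overline\lambda$ is the delicate part: I would argue that if two non-negative solutions existed at $\overline\lambda$, both being $\gg u_0$, one could build a strict supersolution strictly above a subsolution and reopen the degree-theoretic argument to continue solutions slightly past $\overline\lambda$, contradicting the definition of $\overline\lambda$ — this is the standard "the extremal parameter carries a unique solution" mechanism, but making the strict inequalities rigorous for the quasilinear operator (again via the exponential change of variables and Hopf's lemma) is where the real work lies. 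I expect this last uniqueness-at-the-turning-point step, together with the strong comparison principle in the quasilinear setting, to be the main obstacle.
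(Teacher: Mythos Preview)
Your overall architecture matches the paper's: prove $u\gg u_0$ for non-negative solutions, define $\overline\lambda$ as the supremum, bound it via the $\varphi_1$ test, and get uniqueness at $\overline\lambda$ by a continuation/degree argument. Those parts are fine, and your handling of (ii) and (iii) is essentially the paper's.

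The genuine gap is in (i). Your plan only produces two solutions for $\lambda$ \emph{small}: you invoke Theorem~\ref{thmlocal}, and your supersolution construction is explicitly ``for small $\lambda$'' (perturbing $u_0$, or inheriting from $(P_0)$). You then try to propagate via monotonicity, noting that $u_{\lambda_1,1}$ is a strict subsolution of $(P_{\lambda_2})$ for $\lambda_1<\lambda_2$. But a subsolution alone does not give existence at $\lambda_2$: you still need an \emph{upper} solution of $(P_{\lambda_2})$ above it, and nothing in your plan supplies one for general $\lambda<\overline\lambda$. The continuum ${\mathcal C}^+$ does not automatically cover $]0,\overline\lambda[$ either.

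The paper's missing idea is simple but decisive, and works in the opposite direction from your monotonicity: for any $\lambda\in\,]0,\overline\lambda[$, the definition of $\overline\lambda$ gives some $\tilde\lambda\in\,]\lambda,\overline\lambda[$ with a non-negative solution $u_{\tilde\lambda}$. Since $u_{\tilde\lambda}\geq 0$ and $c\geq 0$, one has $\tilde\lambda\, c\, u_{\tilde\lambda}\geq \lambda\, c\, u_{\tilde\lambda}$, so $u_{\tilde\lambda}$ is a (strict) upper solution of $(P_\lambda)$ with $u_{\tilde\lambda}\gg u_0$. Pairing it with the strict lower solution $u_0$ runs the degree argument on all of $]0,\overline\lambda[$ at once. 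This downward-in-$\lambda$ use of solutions at larger parameters as supersolutions is the key step you are missing. (Incidentally, for the $u\gg u_0$ comparison the paper does not need the exponential change of variables: Proposition~\ref{comparison} gives $u\geq u_0$, and the strong maximum principle applied to $w=u-u_0$, which satisfies $-\Delta w - \mu\langle\nabla u+\nabla u_0,\nabla w\rangle = \lambda c u \gneqq 0$, yields $w\gg 0$ directly.)
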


\begin{figure}[t]
\begin{center}
\includegraphics[scale=0.3]{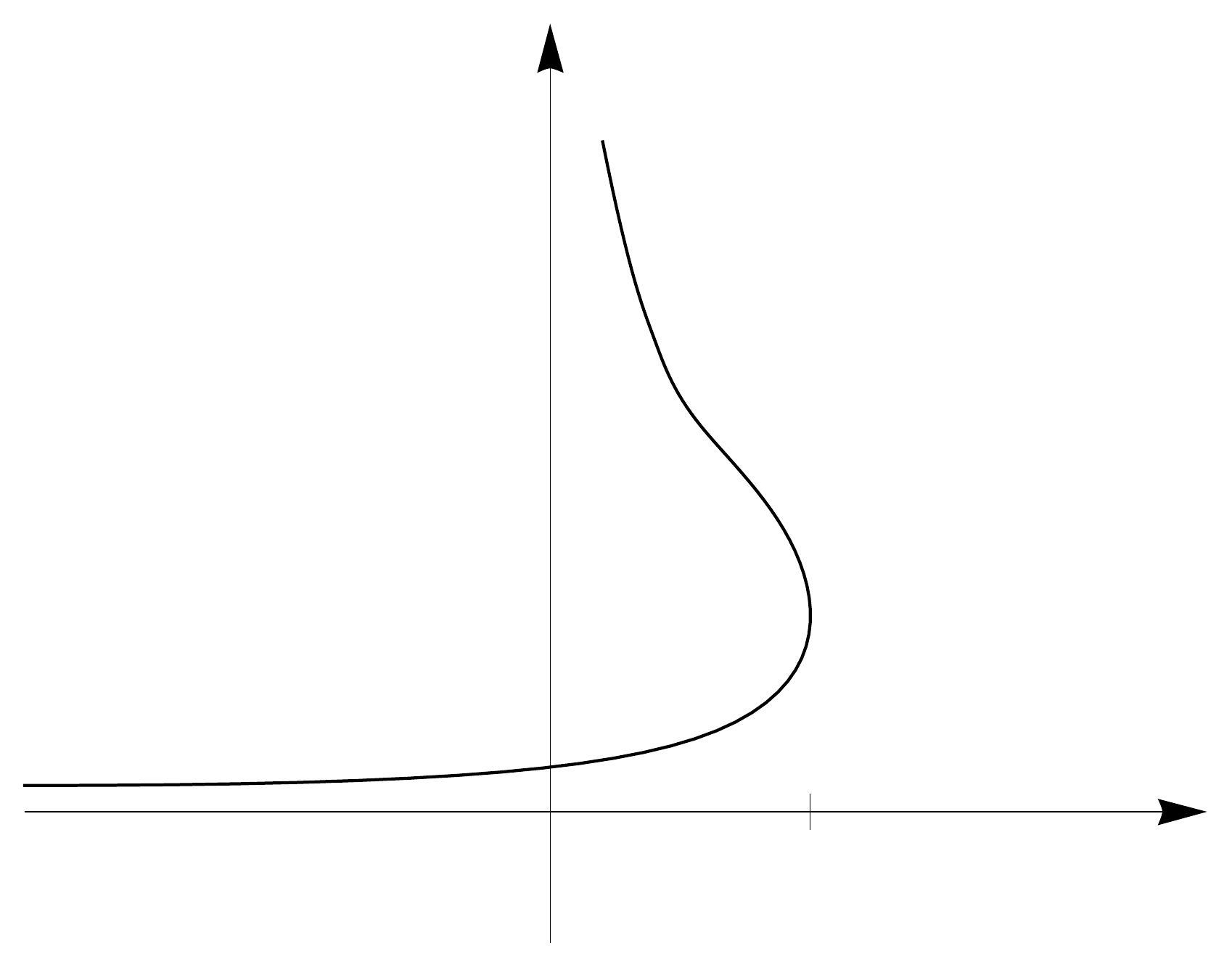}
\put(-95,27){$u_0$}
\put(-5,22){$\lambda$}
\put(-50,05){$\overline\lambda$}
\caption{Illustration of Theorem \ref{thm 1}}
\label{figdn1}
\end{center}
\end{figure}


\begin{remark}
Since $- \Delta u_0 = \mu(x) |\nabla u_0|^2 + h(x)$, we deduce by the strong maximum principle that, in case $h\gneqq 0$, we have $u_0 \gg0$ thus $c u_0\gneqq 0$.
\end{remark}

In comparison to Theorem \ref{thm 1} we have 

\begin{thm}
\label{thm 2}
Under assumption $(A)$  and assuming that $(P_0)$ has a solution $u_0\leq0$ with $c u_0 \lneqq  0$, for every $\lambda >0$, problem $(P_{\lambda})$ has  two solutions with 
$$
u_{\lambda,1} \ll u_{\lambda,2}, \qquad u_{\lambda,1} \ll u_0 ,\quad \mbox{and} \quad \max_{\overline\Omega} u_{\lambda,2}>0.  
$$ 
Moreover we have
\\
$\bullet$  if $\lambda_1< \lambda_2$, then $u_{\lambda_1,1}\gg u_{\lambda_2,1}$;
\\
$\bullet$ $\displaystyle\max_{\overline\Omega} u_{\lambda,2}\to + \infty$ and $u_{\lambda,1}\to u_0$ in $C^1_0(\overline\Omega)$ as $\lambda \to 0$;
\end{thm}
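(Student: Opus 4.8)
\emph{Strategy and the first solution.}
We obtain $u_{\lambda,1}$ by the method of lower and upper solutions and $u_{\lambda,2}$ by topological degree, helped by the global continuum ${\mathcal C}^+$ of Theorem~\ref{ADJT1}. What makes the conclusion hold for \emph{every} $\lambda>0$ — unlike in Theorem~\ref{thm 1} — is that $u_0$ is a strict upper solution of $(P_\lambda)$ whatever the size of $\lambda$: since $u_0\le0$, $c\gneqq0$ and $cu_0\lneqq0$,
\begin{equation*}
-\Delta u_0-\lambda c(x)u_0-\mu(x)|\nabla u_0|^2-h(x)=-\lambda c(x)u_0\gneqq0\qquad\text{for all }\lambda>0 ,
\end{equation*}
and no analogue of the $\lambda$-obstruction of Theorem~\ref{thm 1} is present. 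Fix $\lambda>0$. The technical core of this part is to produce a lower solution $\underline u_\lambda$ of $(P_\lambda)$ with $\underline u_\lambda\le u_0$; this is where the sign of $u_0$ is decisive, guaranteeing such a lower solution \emph{simultaneously for all} $\lambda>0$ (the damping from $\mu|\nabla u|^2$ and the fact that $\lambda c\,w\le0$ for $w\le0$ are what make this possible — one may take $\underline u_\lambda$ of exponential type, or solve a suitable auxiliary coercive problem, and in fact obtain a lower solution below \emph{any} prescribed upper solution $\overline w\le0$). Granting this, the ordered pair $\underline u_\lambda\le u_0$ yields, by the sub/supersolution theory for problems with quadratic growth in the gradient and the regularity of Theorem~\ref{regularity}, the \emph{minimal} solution $u_{\lambda,1}$ of $(P_\lambda)$ in $[\underline u_\lambda,u_0]$; applying the construction with $\overline w=\min\{u,u_0\}$ for an arbitrary solution $u$ of $(P_\lambda)$ shows moreover that $u_{\lambda,1}$ is the \emph{smallest} solution of $(P_\lambda)$. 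As $u_0$ is a strict upper solution, the strong maximum principle and Hopf's lemma (cf. the Remark following the definition of $\ll$) upgrade $u_{\lambda,1}\le u_0$ to $u_{\lambda,1}\ll u_0$.

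\emph{Monotonicity and $\lambda\to0^+$.}
For $0<\lambda_1<\lambda_2$, using $u_{\lambda_1,1}\le u_0\le0$,
\begin{equation*}
-\Delta u_{\lambda_1,1}-\lambda_2 c(x)u_{\lambda_1,1}-\mu(x)|\nabla u_{\lambda_1,1}|^2-h(x)=(\lambda_1-\lambda_2)\,c(x)\,u_{\lambda_1,1}\gneqq0 ,
\end{equation*}
so $u_{\lambda_1,1}$ is a strict upper solution of $(P_{\lambda_2})$; a lower solution of $(P_{\lambda_2})$ below it together with the minimality of $u_{\lambda_2,1}$ gives $u_{\lambda_2,1}\le u_{\lambda_1,1}$, and the strong maximum principle and Hopf's lemma applied to the linear equation solved by the nonnegative function $u_{\lambda_1,1}-u_{\lambda_2,1}$ (whose right-hand side is $\gneqq0$) yield $u_{\lambda_1,1}\gg u_{\lambda_2,1}$. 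Hence $(u_{\lambda,1})$ is monotone as $\lambda\downarrow0$ and squeezed between $u_0$ and, say, $u_{1,1}$; standard a priori estimates and Theorem~\ref{regularity} give compactness in $C^1_0(\overline\Omega)$, and any limit solves $(P_0)$, hence equals $u_0$ by the uniqueness part of Theorem~\ref{ADJT1}. Thus $u_{\lambda,1}\to u_0$ in $C^1_0(\overline\Omega)$ as $\lambda\to0$.

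\emph{The second solution.}
For $\lambda>0$ small, Theorem~\ref{thmlocal} already provides a second solution $u_{\lambda,2}$ with $u_{\lambda,1}\ll u_{\lambda,2}$ and $\max_{\overline\Omega}u_{\lambda,2}\to+\infty$; equivalently, by Theorem~\ref{thm 0} the continuum ${\mathcal C}^+$ is in case~(i), bifurcating from infinity to the right of $\lambda=0$ with the positive part of its solutions blowing up as $\lambda\to0^+$ — which gives both $\max_{\overline\Omega}u_{\lambda,2}>0$ near $0$ and the stated limit. To obtain a second solution with positive maximum for \emph{every} $\lambda>0$ one argues by degree theory for the compact fixed-point operator associated with $(P_\lambda)$ on $C^1_0(\overline\Omega)$ (the quadratic gradient term handled in the usual way): using the a priori bounds available under assumption~(A) one shows the degree on a sufficiently large ball is $0$ — via a homotopy to a problem with no solution, exploiting nonexistence for large data — while on the open set $\{u:\underline u_\lambda\ll u\ll u_0\}$ it equals $1$; by excision there is a solution $u_{\lambda,2}$ outside $[\underline u_\lambda,u_0]$. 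Since $u_{\lambda,1}$ is the smallest solution, $u_{\lambda,1}\le u_{\lambda,2}$ and hence $u_{\lambda,2}\ge\underline u_\lambda$, so being outside $[\underline u_\lambda,u_0]$ forces $u_{\lambda,2}\not\le u_0$; by a comparison/uniqueness argument for non-positive solutions (as in \cite{AbPePr}) any non-positive solution must lie below $u_0$, so $\max_{\overline\Omega}u_{\lambda,2}>0$. Finally $u_{\lambda,1}\not\equiv u_{\lambda,2}$, and the strong maximum principle and Hopf's lemma applied to the \emph{homogeneous} linear equation solved by $u_{\lambda,2}-u_{\lambda,1}\ge0$ give $u_{\lambda,1}\ll u_{\lambda,2}$.

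\emph{Expected main obstacle.}
The two substantial points are (a) the uniform-in-$\lambda$ construction of the lower solution $\underline u_\lambda\ll u_0$, indeed below an arbitrary non-positive upper solution, which is precisely what removes the upper bound on $\lambda$ present in Theorem~\ref{thm 1}; and (b) the degree computation for $u_{\lambda,2}$ — securing the a priori bounds needed to identify a ball on which the degree vanishes, and establishing the comparison that forces $\max_{\overline\Omega}u_{\lambda,2}>0$. Part (b), the sign-reversed counterpart of the corresponding step in Theorem~\ref{thm 1}, is the heavier of the two.
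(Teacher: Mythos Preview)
Your approach is essentially the paper's: $u_0$ as a strict upper solution for every $\lambda>0$, the universal strict lower solution of Lemma~\ref{lem lower} (this is exactly your ``lower solution below any prescribed upper solution''), degree $1$ on the resulting order interval, degree $0$ on a large ball via the homotopy $a\mapsto h+ac$ and Lemma~\ref{non ex} together with the a priori bound of Theorem~\ref{bounds1}, and Proposition~\ref{unique} to force $\max_{\overline\Omega}u_{\lambda,2}>0$.

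Two small points. First, your aside that ``by Theorem~\ref{thm 0} the continuum $\mathcal C^+$ is in case~(i)'' is not justified here and in fact need not hold: in the setting of Theorem~\ref{thm 2} the projection of $\mathcal C^+$ is all of $[0,+\infty[$, so case~(ii) is the relevant one; the blow-up $\max_{\overline\Omega}u_{\lambda,2}\to+\infty$ comes directly from Theorem~\ref{thmlocal} (Step~7 of its proof), not from case~(i). This is harmless since you already invoke Theorem~\ref{thmlocal}, but the reference to case~(i) should be dropped. Second, your route to $\max_{\overline\Omega}u_{\lambda,2}>0$ (show $u_{\lambda,2}\not\le u_0$, then argue any non-positive solution lies below $u_0$) is correct but more circuitous than necessary: the paper simply notes that $u_{\lambda,1}\le0$ and Proposition~\ref{unique} gives uniqueness of non-positive solutions, so $u_{\lambda,2}\neq u_{\lambda,1}$ cannot be $\le0$.
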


\begin{figure}[t]
\begin{center}
\includegraphics[scale=0.3]{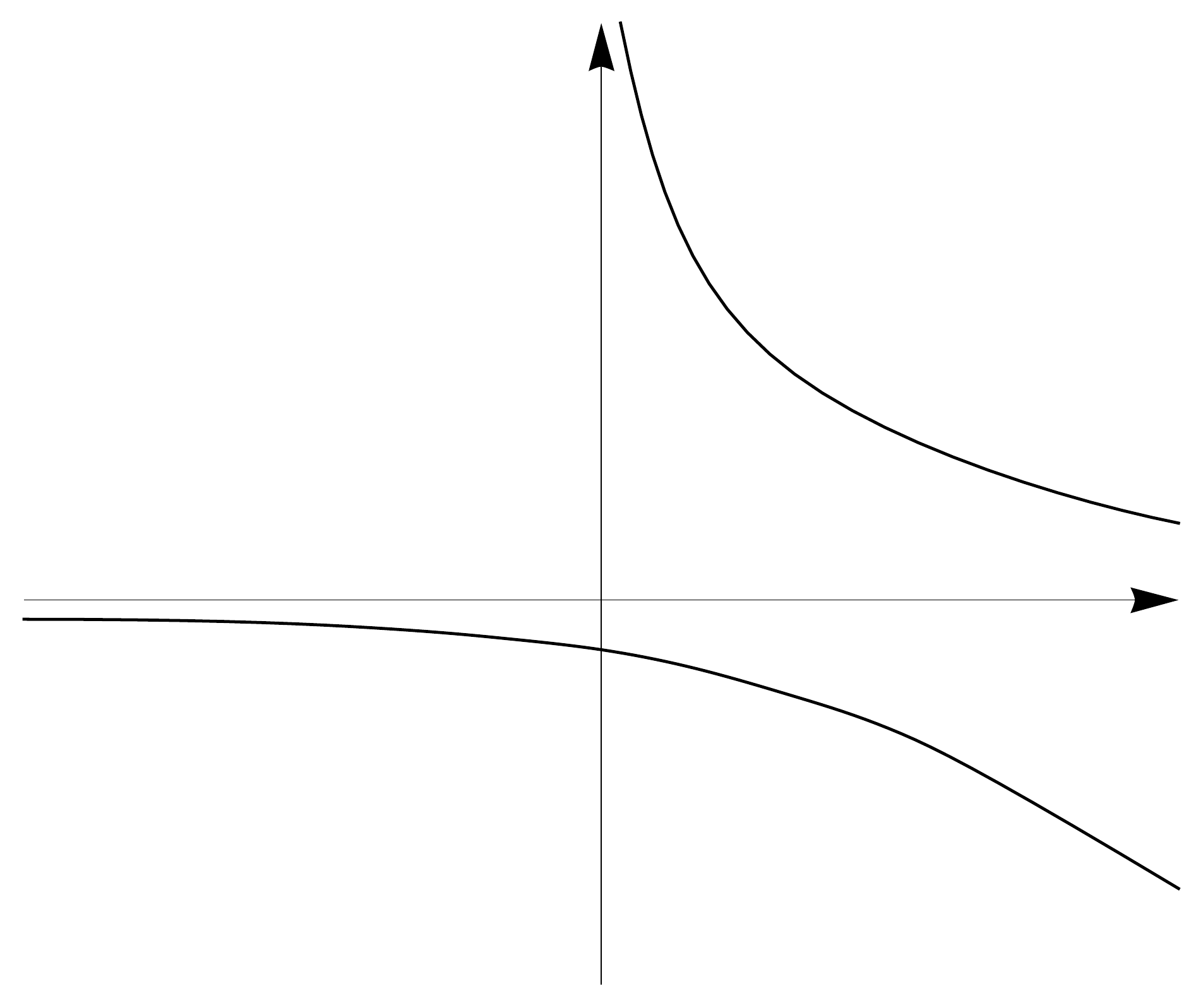}
\put(5,50){$\lambda$}
\put(-95,40){$u_0$}
\caption{Illustration of Theorem \ref{thm 2}}
\label{figdn2}
\end{center}
\end{figure}

\begin{remark}
\label{rem 0}
Observe that in case $(P_0)$ has a solution $u_0$ with  $cu_0 \equiv 0$, then $u_0$ is solution for all $\lambda\in \mathbb R$.
\end{remark}

\begin{remark}
In Proposition \ref{unique}, we prove also that, if $(P_0)$ has a solution $u_0\leq0$ with $c u_0 \lneqq  0$, then $(P_{\lambda})$ has at most one solution $u\leq 0$.
\end{remark}

\begin{cor}
\label{Cor Negatif}
Under assumption $(A)$  and assuming that $h\lneqq 0$, 
for every $\lambda >0$, problem $(P_{\lambda})$ has  two solutions $u_{\lambda,1}, u_{\lambda,2}$ satisfying the conclusions of Theorem \ref{thm 2}.
\end{cor}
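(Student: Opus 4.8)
The plan is to deduce the corollary from Theorem \ref{thm 2} by showing that when $h \lneqq 0$, problem $(P_0)$ has a solution $u_0 \leq 0$ with $c u_0 \lneqq 0$. First I would settle existence of a solution of $(P_0)$: when $h \lneqq 0$ we have $h^+ \equiv 0$, so the infimum in \eqref{cond ArDeJeTa} reduces to $\inf \{ \int_\Omega |\nabla u|^2 \, dx : \|u\|_{H^1_0(\Omega)} = 1 \} = 1 > 0$, and the remark following Theorem \ref{ADJT1} (i.e.\ \cite[Corollary 3.2]{ArDeJeTa}) then guarantees that $(P_0)$ has a solution $u_0$.

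Next I would establish the sign of $u_0$. Since $u_0$ solves $(P_0)$, it satisfies $-\Delta u_0 = \mu(x) |\nabla u_0|^2 + h(x)$; equivalently $-\Delta u_0 - \mu(x)|\nabla u_0|^2 = h(x) \leq 0$ a.e.\ in $\Omega$, with $u_0 = 0$ on $\partial\Omega$. Writing this as a linear inequality $-\Delta u_0 - b(x)\cdot\nabla u_0 \leq 0$ with $b(x) = \mu(x)\nabla u_0(x) \in L^\infty(\Omega)^N$ (recall $u_0 \in W^{2,p}_0(\Omega) \subset C^1_0(\overline\Omega)$ by Theorem \ref{regularity}, so the drift term is bounded), the weak maximum principle for such operators yields $u_0 \leq 0$ in $\Omega$. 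Moreover $u_0 \not\equiv 0$: if $u_0 \equiv 0$ then $h \equiv 0$, contradicting $h \lneqq 0$. Hence $u_0 \lneqq 0$.

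It remains to check that $c u_0 \lneqq 0$, i.e.\ that $c u_0 \not\equiv 0$. Suppose for contradiction that $c u_0 \equiv 0$ a.e.\ in $\Omega$. Since $c \gneqq 0$, the set $\{c > 0\}$ has positive measure, and there $u_0 = 0$ a.e. But then, by the strong maximum principle applied to the operator above (using $b \in L^\infty$, so Hopf-type principles apply to $u_0 \in C^1$), $u_0 \lneqq 0$ together with $u_0$ vanishing on a set of positive measure in the interior forces $u_0 \equiv 0$ — indeed, the strong maximum principle gives that $u_0 < 0$ throughout $\Omega$ unless $u_0 \equiv 0$, so $u_0$ cannot vanish at interior points while being $\lneqq 0$. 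This contradicts $u_0 \lneqq 0$. Therefore $c u_0 \lneqq 0$.

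With $u_0 \leq 0$ and $c u_0 \lneqq 0$ in hand, Theorem \ref{thm 2} applies verbatim and delivers, for every $\lambda > 0$, two solutions $u_{\lambda,1} \ll u_{\lambda,2}$ with $u_{\lambda,1} \ll u_0$ and $\max_{\overline\Omega} u_{\lambda,2} > 0$, together with the monotonicity of $u_{\lambda,1}$ in $\lambda$ and the asymptotics $\max_{\overline\Omega} u_{\lambda,2} \to +\infty$, $u_{\lambda,1} \to u_0$ in $C^1_0(\overline\Omega)$ as $\lambda \to 0$. This is exactly the assertion of the corollary. The only genuinely delicate point is the strong maximum principle step ruling out $c u_0 \equiv 0$: one must make sure the first-order term coming from $\mu |\nabla u_0|^2$ is handled correctly (it is, since $u_0 \in C^1_0(\overline\Omega)$ makes the coefficient $b = \mu \nabla u_0$ bounded), and that the strong maximum principle is invoked in a form valid for $W^{2,p}$ solutions with $p > N$; everything else is a direct citation of \cite[Corollary 3.2]{ArDeJeTa} and Theorem \ref{thm 2}.
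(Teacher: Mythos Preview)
Your proof is correct and follows essentially the same route as the paper: establish that $(P_0)$ has a solution $u_0$ which is strictly negative in $\Omega$ (hence $cu_0 \lneqq 0$), then invoke Theorem \ref{thm 2}. The only minor difference is that the paper obtains existence of $u_0 \leq 0$ directly via the lower/upper solution method (taking $\beta \equiv 0$ as upper solution of $(P_0)$ and the lower solution from Lemma \ref{lem lower}), whereas you cite \cite[Corollary 3.2]{ArDeJeTa} for existence and then recover $u_0 \leq 0$ by the weak maximum principle; both then use the strong maximum principle to get $u_0 \ll 0$.
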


Corollary \ref{Cor Negatif}  should be compared with \cite[Theorem 3.3]{AbPePr} where the authors prove the existence only of $u_{\lambda,1}$ under  however weaker regularity assumptions.
\medbreak

Our Theorems \ref{thmlocal} - \ref{thm 2} require $(P_0)$ to have a solution and thus we are in a situation where  a branch of solutions starts from $(0,u_0)$. In our next results we consider the situation for $\lambda>0$ ``large''.

\begin{thm}
\label{thm 3}
Under assumption $(A)$ and assuming that 

\begin{enumerate}
\item[(a)] $(P_0)$ does not have a solution $u_0\leq0$;
\item[(b)] there exists $\lambda_0 > 0$ and $\beta_0$ an upper solution of  $(P_{\lambda_0})$ with $\beta_0\leq0$.
\end{enumerate}
Then there exists $0 < \underline\lambda\leq\lambda_0$ such that
\begin{enumerate}
\item[(i)] for every $\lambda \in \,]\underline\lambda,+\infty[$, the problem $(P_{\lambda})$ has at least two solutions with 
$u_{\lambda,1}\ll 0$ and $u_{\lambda,1} \ll u_{\lambda,2}$.
\\ 
Moreover, if $\lambda_1< \lambda_2$, we have $u_{\lambda_1,1}\gg u_{\lambda_2,1}$;
\item[(ii)] the problem $(P_{\underline\lambda})$ has a unique solution $u_{\underline\lambda}\leq 0$;
\item[(iii)] for $\lambda<\underline\lambda$, the problem $(P_{\underline\lambda})$ has no 
solution $u\leq 0$.
\end{enumerate} 
\end{thm}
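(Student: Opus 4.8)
The plan is to reproduce, for $\lambda$ large, the mechanism behind Theorems~\ref{ADJT1} and \ref{thm 2}, with the solution $u_0$ of $(P_0)$ replaced by a non-positive solution of $(P_{\lambda_0})$ built out of the upper solution $\beta_0$. The cornerstone is the monotonicity in $\lambda$: if $w\le 0$ is a solution (or an upper solution) of $(P_{\lambda})$ and $\lambda'\ge\lambda$, then $cw\le 0$ gives $-\Delta w\ge\lambda' cw+\mu|\nabla w|^2+h$, so $w$ is an upper solution of $(P_{\lambda'})$, and a strict one as soon as $\lambda'>\lambda$ and $cw\lneqq 0$; the remaining case $cw\equiv 0$ is excluded since by Remark~\ref{rem 0} it would make $w$ a non-positive solution of $(P_0)$, against (a). Dually, a non-positive solution of $(P_{\lambda'})$ is a lower solution of $(P_{\lambda})$ whenever $\lambda<\lambda'$.

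I would first prove that $(P_{\lambda})$ admits a non-positive solution for every $\lambda\ge\lambda_0$. By the monotonicity, $\beta_0$ is an upper solution of each such $(P_{\lambda})$, and a non-positive solution below $\beta_0$ is obtained by passing to the Boccardo--Murat--Puel variable (in which the quadratic gradient term becomes semilinear and $\beta_0$ an upper solution of a semilinear problem) and minimizing the associated energy over the order interval with ceiling $\beta_0$ and the natural floor, the constraints being inactive thanks to the strong maximum principle and the upper-solution property; for non-constant $\mu$ the same conclusion is reached after the corresponding adaptation, cf.~\cite{ArDeJeTa}. Hence $\mathcal N:=\{\lambda>0:\ (P_{\lambda})\text{ has a solution }u\le 0\}$ is nonempty, and combining the monotonicity with this existence-below-an-upper-solution argument (applied at each $\lambda>\lambda'\in\mathcal N$) shows that $\mathcal N$ is an interval unbounded to the right; setting $\underline\lambda:=\inf\mathcal N\le\lambda_0$ we get $(\underline\lambda,+\infty)\subseteq\mathcal N\subseteq[\underline\lambda,+\infty)$, which is already (iii). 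To see $\underline\lambda>0$ I would argue by contradiction: if $\lambda_n\downarrow 0$ and $u_n\le 0$ solves $(P_{\lambda_n})$, then $\int_\Omega\mu|\nabla u_n|^2u_n\le 0$, so testing with $u_n$ bounds $\|u_n\|_{H^1_0(\Omega)}$ once an $L^\infty$ bound is available, and the $L^\infty$ bound (hence $W^{2,p}$-compactness) comes from the a priori estimates for non-positive solutions, uniform for $\lambda$ bounded (see \cite{ArDeJeTa} and Theorem~\ref{regularity}); the limit would be a non-positive solution of $(P_0)$, contradicting (a).

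For $\lambda>\underline\lambda$ I would pick $\lambda'\in(\underline\lambda,\lambda)\cap\mathcal N$ with a non-positive solution $v$ of $(P_{\lambda'})$, which is a strict non-positive upper solution of $(P_{\lambda})$, and run the previous construction to produce a non-positive solution of $(P_{\lambda})$ below $v$; among all non-positive solutions there is then a minimal one $u_{\lambda,1}$ (using the a priori bound from below and the stability of non-positive solutions under pointwise infima, cf.~\cite{ArDeJeTa}). Strictness follows from the strong maximum principle: $z:=v-u_{\lambda,1}\ge 0$ satisfies $-\Delta z-\mu\nabla(v+u_{\lambda,1})\cdot\nabla z=\lambda c z+g\ge 0$ with $g\gneqq 0$ and $z=0$ on $\partial\Omega$, so $u_{\lambda,1}\ll v\le 0$, hence $u_{\lambda,1}\ll 0$; the monotonicity $u_{\lambda_1,1}\gg u_{\lambda_2,1}$ for $\underline\lambda<\lambda_1<\lambda_2$ is obtained in the same way, using that $u_{\lambda_1,1}$ is a strict non-positive upper solution of $(P_{\lambda_2})$ together with minimality. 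The second solution I would get by a Leray--Schauder degree argument as in \cite{JeSi,ArDeJeTa}: the fixed-point index of $u_{\lambda,1}$ is $+1$, whereas the degree of the solution operator on a large ball of $C^1_0(\overline\Omega)$ — whose radius is furnished by the a priori bound valid on compact subintervals of $(\underline\lambda,+\infty)$ together with a non-existence input — is $0$, so a second solution $u_{\lambda,2}$ exists, with $u_{\lambda,1}\ll u_{\lambda,2}$ since a solution $\le u_{\lambda,1}$ would contradict minimality. This proves (i). For (ii), with $\lambda_n\downarrow\underline\lambda$ the minimal solutions $u_{\lambda_n,1}$ increase and lie between $u_{\lambda_1,1}$ and $0$, hence are bounded in $L^\infty$ and, after bootstrap, converge in $W^{2,p}$ to a non-positive solution $u_{\underline\lambda}$ of $(P_{\underline\lambda})$ (so $\underline\lambda\in\mathcal N$ and $\mathcal N=[\underline\lambda,+\infty)$); since $cu_{\underline\lambda}\lneqq 0$, uniqueness of the non-positive solution at $\underline\lambda$ follows from Proposition~\ref{unique} (or its proof).

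I expect the main obstacles to be: the existence of a non-positive solution below $\beta_0$ at $\lambda=\lambda_0$, which for general $\mu\in L^\infty$ cannot rely on the exact Boccardo--Murat--Puel substitution and requires a careful variational/truncation argument; the degree computation producing the second solution in the third step, which needs both the non-existence ingredient and the a priori bounds on compact $\lambda$-ranges that make the degree well defined; and the a priori bound / compactness for non-positive solutions used to locate $\underline\lambda$. By contrast, once the strong maximum principle is applied to $v-u_{\lambda,1}$, the ordering and monotonicity statements are routine.
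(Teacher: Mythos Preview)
Your overall strategy---define $\underline\lambda$ as the infimum over $\lambda$ admitting a non-positive solution (the paper uses non-positive \emph{upper} solutions, which is equivalent here), use monotonicity in $\lambda$ to propagate upper solutions, get the first solution by a lower/upper solution method, and the second by a degree argument---matches the paper's. The paper obtains the non-positive solution below a given non-positive upper solution directly from Lemma~\ref{lem lower} and Theorem~\ref{sousDe}, which avoids the change-of-variable/variational detour you sketch and works for non-constant $\mu$ without further ado; you may want to adopt that route, since your own ``main obstacle'' list already flags this step as delicate.

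There is, however, a genuine gap in your treatment of (ii). You invoke Proposition~\ref{unique} (``or its proof'') for the uniqueness of the non-positive solution at $\underline\lambda$, but Proposition~\ref{unique} \emph{assumes} that $(P_0)$ has a solution $u_0\le 0$ with $cu_0\lneqq 0$---precisely what hypothesis~(a) rules out. The proof of Proposition~\ref{unique} does not survive this: its Step~1 uses Proposition~\ref{comparison} (the comparison principle for $(P_0)$) to place any non-positive lower solution below $u_0$, and then the scaling argument in Step~3 needs $w_{\bar\varepsilon}\ll u_0\le 0$ to contradict the minimality of $\bar\varepsilon$. There is no substitute for $u_0$ here; Proposition~\ref{comparison} is specific to $\lambda=0$ (for $\lambda>0$ it would force uniqueness and contradict the very multiplicity you are proving), so you cannot replace $(P_0)$ by $(P_{\underline\lambda})$ in that argument.

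The paper's uniqueness argument at $\underline\lambda$ is instead the one from Step~6 of the proof of Theorem~\ref{thmlocal}: if $(P_{\underline\lambda})$ had two non-positive solutions $u_1,u_2$, their average $\beta=\tfrac12(u_1+u_2)\le 0$ would be a strict upper solution; together with the strict lower solution from Lemma~\ref{lem lower} one gets $\deg(I-\mathcal M_{\underline\lambda},\tilde{\mathcal S})=1$, and the homotopy argument of Step~2 then yields a solution $u\ll\beta\le 0$ of $(P_\lambda)$ for some $\lambda<\underline\lambda$, contradicting the definition of $\underline\lambda$. You should replace your appeal to Proposition~\ref{unique} by this argument.
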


\begin{figure}[t]
\begin{center}
\includegraphics[scale=0.3]{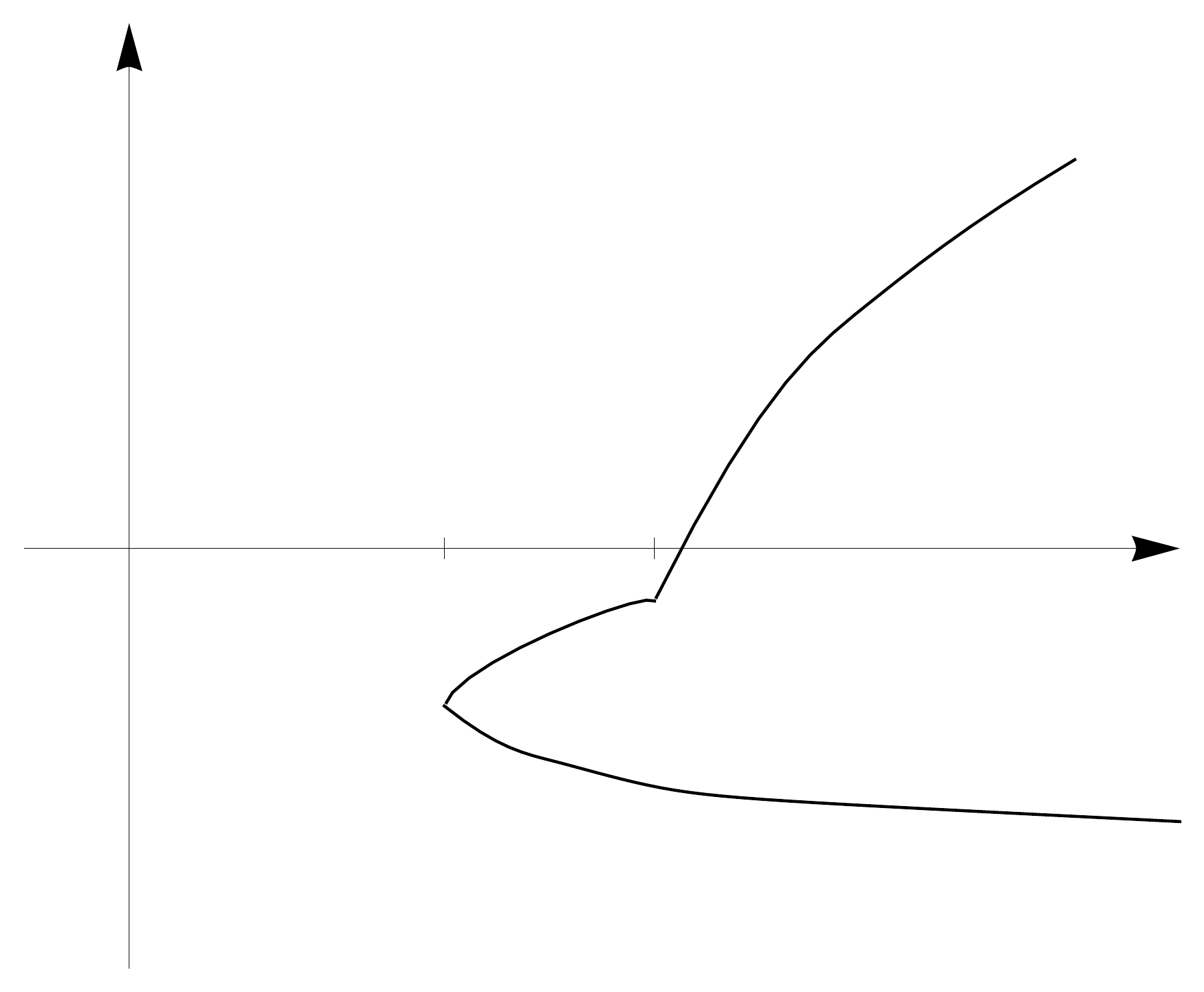}
\put(-5,50){$\lambda$}
\put(-80,65){$\lambda_0$}
\put(-105,65){$\underline{\lambda}$}
\caption{Illustration of Theorem \ref{thm 3}}
\label{figdn3}
\end{center}
\end{figure}

In our last results we change our point of view and consider no more the dependence in $\lambda$ but in $\|h^+\|$. In proving Theorem
\ref{thm 4}, we shall also obtain, in case  $\|h^+\|$ is small enough,   
the existence of a negative upper solution of   $(P_{\lambda_0})$ for some $\lambda_0 \geq 0$ as needed in the assumptions of  Theorem \ref{thm 3}.


\begin{thm}
\label{thm 4}
Under assumption $(A)$, let $\tilde h\in L^p(\Omega)$ and consider $\tilde h^+$ and $\tilde h^-$ respectively its positive and its negative part.
Assume that $\tilde h^+\not\equiv0$. Let  $\nu_1 >0$ be the first eigenvalue of
\begin{equation}
\label{other-eigenvalue}
-\Delta u+\mu_2 \tilde h^-(x) u  = \nu_1 c(x)u,   \quad u \in H^1_0(\Omega).
\end{equation}

Then, for all $\lambda>\nu_1$, there exists $\overline k=\overline k(\lambda)\in \, ]0,+\infty[$ such that, 
\begin{enumerate}
\item[(i)]
for all $k\in \,]0, \overline k[$, the problem
\begin{equation*}
- \Delta u = \lambda c(x)u+ \mu(x) |\nabla u|^2 +  k\tilde h^+(x)-\tilde h^-(x), \quad  u \in H^1_0(\Omega) \cap L^{\infty}(\Omega)
\eqno{(Q_{\lambda,k})}
\end{equation*}
has at least two solutions $u_{\lambda,1}\ll u_{\lambda,2}$;
\item[(ii)]
for all $k>\overline k$, the problem $(Q_{\lambda,k})$ has no solution;
\item[(iii)]
for $k=\overline k$, the problem $(Q_{\lambda,k})$ has  exactly one solution.
\end{enumerate}
\end{thm}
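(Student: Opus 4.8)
The plan is to fix $\lambda>\nu_1$ once and for all and treat $k$ (equivalently $\|h^+\|$, with $h=k\tilde h^+-\tilde h^-$) as the continuation parameter, showing that
$$K:=\{\,k>0 : (Q_{\lambda,k})\text{ has a solution}\,\}$$
is a nonempty bounded interval $]0,\overline k]$, with two ordered solutions strictly inside and exactly one at the right endpoint. The backbone is the lower/upper solution method combined with Leray--Schauder degree (as in Theorems \ref{thm 1}--\ref{thm 3} and using the $C^1_0(\overline\Omega)$ regularity of Theorem \ref{regularity}), together with an a priori bound obtained through an exponential change of variable tied to the eigenvalue problem \eqref{other-eigenvalue}. \emph{Step 1 (existence for small $k$ and a negative upper solution).} Using the principal eigenfunction $\psi_1>0$ of \eqref{other-eigenvalue} and the hypothesis $\lambda>\nu_1$, one constructs for $k$ small a strict upper solution $\beta_k\lneqq0$ of $(Q_{\lambda,k})$: one starts from a small multiple $-t\psi_1$ and corrects it by solving the auxiliary coercive linear problem for $-\Delta+\mu_2\tilde h^-$ with right-hand side tuned to absorb both $\mu(x)|\nabla\beta_k|^2$ and $k\tilde h^+$, the sign $\lambda-\nu_1>0$ being precisely what makes the principal term have the right sign; the delicate point is the set where $\tilde h=\tilde h^+-\tilde h^-$ is positive and $c$ vanishes, which forces $\beta_k$ to be strictly superharmonic there and is exactly why $k\|\tilde h^+\|$ has to be small. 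Dually one produces a large strict lower solution $\alpha_k\le\beta_k$ (a negative multiple of the solution of a torsion-type problem, rescaled so that the term $\lambda c\,\alpha_k$ stays controlled). The lower/upper solution theorem then gives a solution of $(Q_{\lambda,k})$ of Leray--Schauder index $1$ in the order interval $]\alpha_k,\beta_k[$; in particular $K\ne\emptyset$, and this is also where the negative upper solution announced for Theorem \ref{thm 3} is obtained.

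\emph{Step 2 ($K$ is a down-closed interval).} If $0<k_1<k_2$ and $u_{k_2}$ solves $(Q_{\lambda,k_2})$, then since $k_1\tilde h^+\le k_2\tilde h^+$ the function $u_{k_2}$ is an upper solution of $(Q_{\lambda,k_1})$; pairing it with a lower solution $\alpha_{k_1}$ chosen below it yields a solution of $(Q_{\lambda,k_1})$, and the minimal solutions depend monotonically on $k$. Hence $K=\,]0,\overline k[$ or $]0,\overline k]$ with $\overline k:=\sup K$. \emph{Step 3 (a priori bound, $\overline k<+\infty$).} For a solution $u$ of $(Q_{\lambda,k})$ set $v=\tfrac1{\mu_1}(e^{\mu_1 u}-1)\in H^1_0(\Omega)$, so $1+\mu_1 v=e^{\mu_1 u}>0$ and $v>-1/\mu_1$; since $\mu\ge\mu_1$ one gets the differential inequality
$$-\Delta v\ \geq\ (1+\mu_1 v)\bigl(\lambda c\,u+k\tilde h^+-\tilde h^-\bigr).$$
Testing against $\psi_1\ge0$, integrating by parts (boundary terms vanish since $u=\psi_1=0$ on $\partial\Omega$), using $(1+\mu_1 v)u\ge v$ and $v>-1/\mu_1$, one is led to
$$k\int_\Omega \tilde h^+\,e^{\mu_1 u}\,\psi_1\,dx\ \leq\ \frac{\lambda-\nu_1}{\mu_1}\int_\Omega c\,\psi_1\,dx+C,$$
with $C$ independent of $k$ and $u$. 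Combined with a uniform lower bound for $u$ on $\operatorname{supp}(\tilde h^+\psi_1)$ — coming from the fact that, as $k\tilde h^+\ge0$, every solution of $(Q_{\lambda,k})$ is a super solution of $(Q_{\lambda,0})$ and so lies above a fixed lower solution of $(Q_{\lambda,0})$ — this forces $k\le C'$; hence $\overline k<+\infty$ and $(Q_{\lambda,k})$ has no solution for $k>\overline k$, which is (ii).

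\emph{Step 4 (second solution and endpoint).} For $k\in\,]0,\overline k[$ the minimal solution $u_{\lambda,1}$ sits in $]\alpha_k,\beta_k[$ with index $1$, while a homotopy towards a problem with no solution for parameters just beyond $\overline k$, together with the a priori bound of Step 3 on the lower branch, shows the total Leray--Schauder degree on a large ball is $0$; hence a second solution $u_{\lambda,2}$ exists, and the strong maximum principle upgrades the order to $u_{\lambda,1}\ll u_{\lambda,2}$, giving (i). For (iii), take $k_n\uparrow\overline k$; the corresponding minimal solutions form a monotone, uniformly bounded (Step 3) family converging in $C^1_0(\overline\Omega)$ to a solution of $(Q_{\lambda,\overline k})$, and if $(Q_{\lambda,\overline k})$ had two solutions a degree/implicit-function argument as above would produce solutions for $k$ slightly larger than $\overline k$, contradicting Step 3.

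\emph{Main obstacle.} The crux is Step 3. The quadratic-gradient term blocks any direct maximum-principle comparison, and because $\mu$ is only bounded (not constant) the exponential substitution yields an \emph{inequality} rather than an exact semilinear equation, so the shifted eigenvalue $\nu_1$ of \eqref{other-eigenvalue} must be injected through the test function $\psi_1$ to close the estimate — this is exactly where the restriction $\lambda>\nu_1$ is used. Moreover, passing from ``$k\int \tilde h^+ e^{\mu_1 u}\psi_1$ bounded'' to ``$k$ bounded'' genuinely requires the extra, non-obvious uniform lower bound on solutions on the support of $\tilde h^+$. A secondary difficulty is that solutions are not globally a priori bounded (the upper branch blows up), so the degree computations in Steps 1 and 4 must be carried out on open sets adapted to the ordered sub/super-solution pairs rather than on a fixed ball, and the vanishing of $c$ on part of $\{\tilde h>0\}$ has to be handled with care throughout the construction of the barriers.
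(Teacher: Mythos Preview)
Your overall architecture is right, but two places contain genuine gaps.

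\textbf{Step 1 (the negative upper solution).} The missing ingredient is the anti-maximum principle (Proposition \ref{anti}). Starting from a multiple of $-\psi_1$ and ``correcting'' by an auxiliary coercive problem does not obviously close: on the set where $c=0$ and $\tilde h^+>0$ the required inequality reduces to $-\Delta\beta\ge\mu|\nabla\beta|^2+k\tilde h^+$, and no perturbation along $\psi_1$ or via $-\Delta+\mu_2\tilde h^-$ produces the needed strict superharmonicity there while keeping $\beta\le 0$. The paper's construction is specific: pick $\lambda_0\in{]\nu_1,\min(\nu_1+\varepsilon_0,\nu_1+\tfrac{\lambda-\nu_1}{2})]}$ so that, by anti-maximum, the solution $w$ of $-\Delta w+\mu_2\tilde h^-w=\lambda_0 cw+\mu_2\tilde h^+$ satisfies $w\ll0$ even though the forcing $\mu_2\tilde h^+\gneqq0$; then $\tilde\beta_k=\frac{k}{\lambda}w$ and $\beta_k=\frac{1}{\mu_2}\ln(1+\lambda\tilde\beta_k)$ give the upper solution for $k$ small. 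This is where $\lambda>\nu_1$ is actually used.

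\textbf{Steps 3--4 (nonexistence for large $k$ and the degree).} Your exponential substitution tested against $\psi_1$ can be made to work, but it is heavier than needed and it misleads you about where the difficulty lies. The paper simply tests the equation against $\phi^2$ with $\phi\in C_0^\infty(\Omega)$, $\int\tilde h^+\phi^2>0$, and uses the uniform lower bound $u\ge -M$ of Lemma \ref{lem 4.1} (independent of $k$, since $h^-=\tilde h^-$ does not depend on $k$) to force $k$ bounded; no eigenvalue comparison is required here. More importantly, your claim that ``solutions are not globally a priori bounded (the upper branch blows up)'' is wrong in this setting: $\lambda>0$ is \emph{fixed}, so Theorem \ref{bounds1} gives a uniform $L^\infty$ bound on \emph{all} solutions for each $k$. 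This is exactly what makes the total degree on a large ball well-defined; the paper then argues as in Theorems \ref{thm 1}--\ref{thm 2}, using the homotopy in $a$ of \eqref{EE1a} together with Lemma \ref{non ex} to get degree $0$, not a homotopy in $k$ controlled only on the lower branch.
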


We deduce from  Theorems \ref{thm 1} and \ref{thm 4} the following Corollary that concerns the case $h \gneqq 0$.

\begin{cor} 
\label{surprise}
Under assumption $(A)$ and assuming that $h \gneqq 0$, for all $\tilde \lambda>\gamma_1$ where $\gamma_1>0$ is the first eigenvalue \eqref{eigenvaluep},  there exists $\tilde k>0$ such that, for all $k\in \,]0,\tilde k]$, 
\begin{enumerate}
\item[(i)]
there exists $\lambda_1\in \,]0,\gamma_1[$ such that
\\
$\bullet$ for all $\lambda\in\, ]0, \lambda_1[$, the problem 
\begin{equation}
\label{eq cor}
- \Delta u = \lambda c(x)u+ \mu(x) |\nabla u|^2 + kh(x), \quad u \in H^1_0(\Omega) \cap L^{\infty}(\Omega)
\end{equation}
has at least two positive solutions;
\\
$\bullet$ for $\lambda=\lambda_1$, the problem \eqref{eq cor} has exactly one positive solution;
\\
$\bullet$ for $\lambda>\lambda_1$, the problem \eqref{eq cor} has no non-negative solution;
\item[(ii)]
for $\lambda=\gamma_1$ the problem \eqref{eq cor} has no solution;
\item[(iii)]
there exists $\lambda_2\in \,]\gamma_1, \tilde \lambda]$ such that
\\
$\bullet$  for $\lambda>\lambda_2$, the problem \eqref{eq cor} has at least two solutions with $u_{\lambda,1}\ll 0$ and $\min u_{\lambda,2}<0$;
\\
$\bullet$ for $\lambda=\lambda_2$,  the problem \eqref{eq cor} has a unique non-positive solution; 
\\
$\bullet$ for $\lambda<\lambda_2$,  the problem \eqref{eq cor} has no non-positive solution. 
\end{enumerate}
\end{cor}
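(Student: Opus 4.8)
The plan is to derive the three parts by specializing Theorems \ref{thm 1} and \ref{thm 4} to the right-hand side $kh$, using the hypothesis $h\gneqq 0$ so that $h^+=h$ and $h^-\equiv 0$. First I would fix $\tilde\lambda>\gamma_1$ and apply Theorem \ref{thm 4} with $\tilde h:=h$ (so $\tilde h^-\equiv 0$). With $\tilde h^-\equiv 0$, the eigenvalue problem \eqref{other-eigenvalue} reduces exactly to \eqref{eigenvaluep}, hence $\nu_1=\gamma_1$. Theorem \ref{thm 4} then gives, for our chosen $\tilde\lambda>\gamma_1$, a threshold $\overline k(\tilde\lambda)>0$ such that $(Q_{\tilde\lambda,k})$, which here is precisely \eqref{eq cor} with $\lambda=\tilde\lambda$, has at least two solutions for $k\in\,]0,\overline k(\tilde\lambda)[$. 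Set $\tilde k:=\overline k(\tilde\lambda)$ (or a slightly smaller value if needed to keep strict inequalities), and from now on fix $k\in\,]0,\tilde k]$.

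For part (iii): with $k\le\tilde k$ fixed, equation \eqref{eq cor} at $\lambda=\tilde\lambda$ has (at least) a solution; I would check that the argument inside the proof of Theorem \ref{thm 4} actually produces a nonpositive upper solution $\beta_0\le 0$ of \eqref{eq cor} for the parameter value $\tilde\lambda$ (this is the extra output explicitly advertised in the sentence preceding Theorem \ref{thm 4}), so hypothesis (b) of Theorem \ref{thm 3} holds with $\lambda_0=\tilde\lambda$ and right-hand side $kh$. Hypothesis (a) of Theorem \ref{thm 3} — that the $\lambda=0$ problem $-\Delta u=\mu|\nabla u|^2+kh$ has no nonpositive solution — follows from $h\gneqq 0$: any solution $u_0$ of $(P_0)$ satisfies $-\Delta u_0=\mu|\nabla u_0|^2+kh\gneqq 0$, so by the strong maximum principle $u_0\gg 0$, and in particular $(P_0)$ has no solution $\le 0$. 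Theorem \ref{thm 3} then yields $\underline\lambda=:\lambda_2\in\,]0,\tilde\lambda]$ with the two-solution statement $u_{\lambda,1}\ll 0$, $u_{\lambda,1}\ll u_{\lambda,2}$ for $\lambda>\lambda_2$, uniqueness of a nonpositive solution at $\lambda_2$, and no nonpositive solution below. Then I must argue $\lambda_2>\gamma_1$: this is exactly part (ii), so I would prove (ii) first.

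For part (ii): this is \cite[Lemma 6.1]{ArDeJeTa} as recalled in the introduction — when $h\ge 0$ (hence $kh\ge 0$), problem \eqref{eq cor} has no solution at $\lambda=\gamma_1$. Since $\lambda_2$ is, by Theorem \ref{thm 3}(iii), the infimum of parameters admitting a nonpositive solution and (by Theorem \ref{thm 3}(i)) every $\lambda>\lambda_2$ admits one, while $\lambda=\gamma_1$ admits no solution at all, necessarily $\gamma_1\le\lambda_2$; combined with Theorem \ref{thm 3}(iii) applied at $\gamma_1$ (which would force nonexistence of nonpositive solutions for $\lambda<\lambda_2$, consistent with the total nonexistence at $\gamma_1$) one gets $\gamma_1<\lambda_2$ strictly, since the two-solution regime $\lambda>\lambda_2$ cannot reach down to $\gamma_1$. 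Finally, to identify the solutions in (iii) as having $u_{\lambda,1}\ll 0$ and $\min u_{\lambda,2}<0$: the first is immediate from Theorem \ref{thm 3}(i); for $\min u_{\lambda,2}<0$ I would note $u_{\lambda,2}\gg u_{\lambda,1}\ll 0$ does not by itself force it, so instead I would invoke that $u_{\lambda,1}$ and $u_{\lambda,2}$ are distinct and use an ordered lower/upper solution squeeze showing $u_{\lambda,2}$ lies above $u_{\lambda,1}$ but, being obtained as a second solution between a strictly negative subsolution and a negative upper solution $\beta_0\le 0$, must still take a negative value somewhere.

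For part (i): here I apply Theorem \ref{thm 1} to \eqref{eq cor}. By the Remark following Theorem \ref{thm 1}, since $kh\gneqq 0$ any solution $u_0$ of $(P_0)$ (with right-hand side $kh$) satisfies $u_0\gg 0$, hence $cu_0\gneqq 0$; moreover, to even apply Theorem \ref{thm 1} I need $(P_0)$ to have a solution, which I get from the Remark after Theorem \ref{ADJT1}: condition \eqref{cond ArDeJeTa} with $kh$ in place of $h$ reads $\inf_{\|u\|=1}\int_\Omega(|\nabla u|^2-\mu_2 k h u^2)\,dx>0$, and shrinking $\tilde k$ if necessary (replacing it by $\min\{\tilde k,$ that bound$\}$) guarantees this for all $k\in\,]0,\tilde k]$. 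Theorem \ref{thm 1} then delivers $\overline\lambda=:\lambda_1\in\,]0,\gamma_1[$ (the containment in $]0,\gamma_1[$ coming from the last part of Theorem \ref{ADJT1}, since the continuum's $\lambda$-projection lies in $]-\infty,\gamma_1[$ when $h\gneqq 0$) with two positive solutions for $\lambda\in\,]0,\lambda_1[$, exactly one at $\lambda_1$, and no nonnegative solution for $\lambda>\lambda_1$. The main obstacle, I expect, is the bookkeeping to make a single $\tilde k$ work simultaneously for Theorems \ref{thm 1}, \ref{thm 3} and \ref{thm 4} — in particular verifying that the negative upper solution produced for small $\|h^+\|$ in the proof of Theorem \ref{thm 4} is available at the specific value $\lambda_0=\tilde\lambda$ and with the scaled datum $kh$ — and then pinning down the strict inequality $\gamma_1<\lambda_2$ rather than just $\gamma_1\le\lambda_2$.
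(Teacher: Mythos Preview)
Your approach is essentially the same as the paper's: use the proof of Theorem~\ref{thm 4} to produce a negative upper solution at $\tilde\lambda$ so that Theorem~\ref{thm 3} applies for part~(iii), invoke \cite[Lemma~6.1]{ArDeJeTa} for part~(ii), and apply Theorem~\ref{thm 1} for part~(i) after shrinking $\tilde k$ to ensure $(P_0)$ has a solution. The bookkeeping you flag as the ``main obstacle'' is handled exactly as you outline, and the strict inequality $\gamma_1<\lambda_2$ follows cleanly from the fact that $(P_{\lambda_2})$ has a solution while $(P_{\gamma_1})$ does not, combined with Theorem~\ref{thm 3}(i) giving solutions for all $\lambda>\lambda_2$.

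There is one genuine weak spot: your argument for $\min u_{\lambda,2}<0$ is not right. The second solution $u_{\lambda,2}$ in Theorem~\ref{thm 3} is \emph{not} obtained between the negative lower and upper solutions; it comes from a degree computation in the complement of the set $\{\alpha\ll u\ll\beta_{\tilde\lambda}\}$, so nothing in its construction forces it to lie below $\beta_0$. The correct argument is much simpler and already in your hands: once parts~(i) and~(ii) are established you know $\lambda_1<\gamma_1<\lambda_2$, and part~(i) says there is no nonnegative solution for any $\lambda>\lambda_1$. Hence for $\lambda>\lambda_2>\lambda_1$ the solution $u_{\lambda,2}$ cannot be nonnegative, so $\min u_{\lambda,2}<0$. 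The paper leaves this step implicit, but that is the intended reasoning.
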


\begin{remark}
Observe that, as $h\geq 0$, we have $\gamma_1=\nu_1$, where $\nu_1$ is the first eigenvalue of \eqref{other-eigenvalue} and $\gamma_1$ is the first eigenvalue of 
\eqref{eigenvaluep}.
\end{remark}

\begin{figure}[t]
\begin{center}
\includegraphics[scale=0.3]{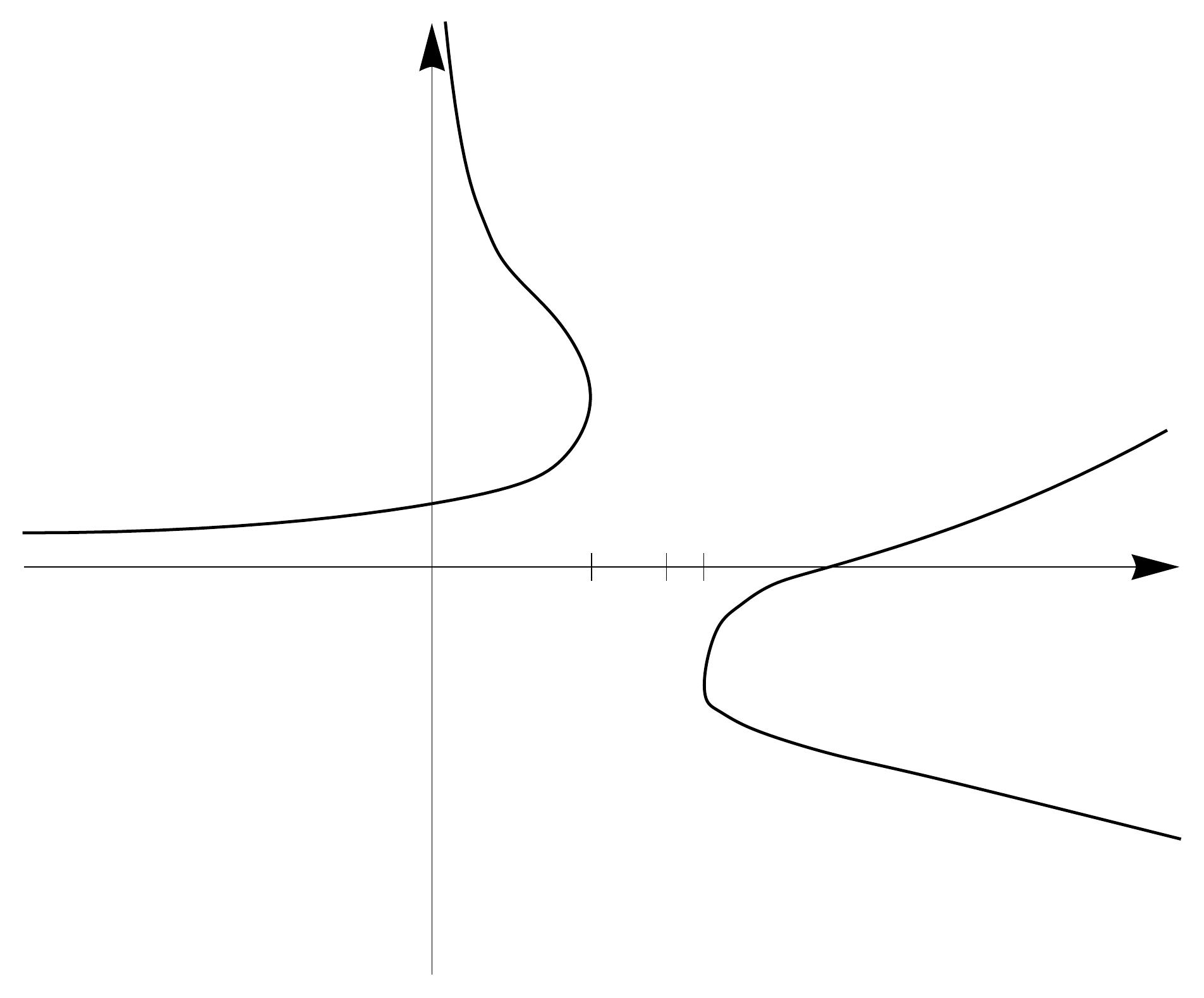}
\put(-0,60){$\lambda$}
\put(-90,49){$\lambda_1$}
\put(-78,49){$\gamma_1$}
\put(-70,65){$\lambda_2$}
\caption{Illustration of Corollary \ref{surprise}}
\label{figdn4}
\end{center}
\end{figure}

We conclude this paper considering the case $h\equiv 0$ which can be seen  as intermediate between  the case $h \gneqq 0$ considered in Corollary \ref{surprise} and the case $h \lneqq 0$ considered in Corollary \ref{Cor Negatif}. Observe also that if we consider the problem \eqref{eq cor} with $k\in \,]-\infty, \tilde k]$, then, it is easy to see that the lower of the two solutions tends to $0$ and that $\lambda_1\to \gamma_1$, $\lambda_2\to \gamma_1$ as $k\to 0$.

\begin{thm}
\label{cas h0}
Under assumption $(A)$ with $h \equiv 0$ and recalling that $\gamma_1>0$ denotes the first eigenvalue \eqref{eigenvaluep},   we have
\begin{enumerate}
\item[(i)]
for all $\lambda\in\, ]0, \gamma_1[$, the problem 
\begin{equation}
\label{eq 0}
- \Delta u = \lambda c(x)u+ \mu(x) |\nabla u|^2, \quad u \in H^1_0(\Omega) \cap L^{\infty}(\Omega)
\end{equation}
has at least two solutions $u_{\lambda,1}\equiv 0$ and $u_{\lambda,2}\gneqq 0$ 
 with $\displaystyle\max_{\overline\Omega} u_{\lambda,2}\to + \infty$  as $\lambda \to 0$;
\item[(ii)]
for $\lambda=\gamma_1$ the problem \eqref{eq 0} has only the trivial solution;
\item[(iii)]
for $\lambda>\gamma_1$, the problem \eqref{eq 0} has at least two solutions $u_{\lambda,1}\equiv0$ and $u_{\lambda,2}\ll 0$. 
\end{enumerate}
\end{thm}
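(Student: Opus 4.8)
The plan is to treat the three ranges separately, using the trivial solution, one elementary integral identity, and (for (iii)) the earlier theorems applied to a perturbation of $h$. Since $h\equiv0$, the function $u\equiv0$ solves \eqref{eq 0} for every $\lambda$; this is $u_{\lambda,1}$ in (i) and (iii) and a solution in (ii). I would then record the identity obtained by testing \eqref{eq 0} with the first eigenfunction $\varphi_1>0$ of \eqref{eigenvaluep} (which lies in $H^1_0(\Omega)\cap L^\infty(\Omega)$) and using $-\Delta\varphi_1=\gamma_1 c\varphi_1$: every solution $u$ of \eqref{eq 0} satisfies
\begin{equation}\label{starh}
(\gamma_1-\lambda)\int_\Omega c\,u\,\varphi_1\,dx=\int_\Omega\mu\,|\nabla u|^2\varphi_1\,dx\ge0 .
\end{equation}
For $\lambda=\gamma_1$ the left side of \eqref{starh} vanishes, so $\int_\Omega\mu|\nabla u|^2\varphi_1=0$; as $\mu\ge\mu_1>0$ and $\varphi_1>0$ in $\Omega$ this forces $\nabla u\equiv0$, hence $u\equiv0$, which is (ii). Next, testing \eqref{eq 0} with $-u^-$ (with $u^-=\max\{-u,0\}$) gives $\int_\Omega|\nabla u^-|^2\le\lambda\int_\Omega c(u^-)^2\le(\lambda/\gamma_1)\int_\Omega|\nabla u^-|^2$, so for $\lambda<\gamma_1$ we get $u^-\equiv0$: every solution of \eqref{eq 0} with $\lambda<\gamma_1$ is non-negative, and since $\lambda cu+\mu|\nabla u|^2\ge0$, by the strong maximum principle such a solution is either $\equiv0$ or $\gg0$.

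For (i) I would invoke Theorem~\ref{thmlocal}, which applies since $(P_0)$ has the solution $u_0\equiv0$: it provides $\overline\lambda\in\,]0,+\infty]$ such that, for $\lambda\in\,]0,\overline\lambda[$, \eqref{eq 0} has solutions $u_{\lambda,1}\ll u_{\lambda,2}$ with $u_{\lambda,1}\to0$ in $C^1_0(\overline\Omega)$ and $\max_{\overline\Omega}u_{\lambda,2}\to+\infty$ as $\lambda\to0$. The heart of the matter is to prove $\overline\lambda=\gamma_1$: on the one hand $\overline\lambda\le\gamma_1$, since $\overline\lambda>\gamma_1$ would give (via Theorem~\ref{thmlocal}) two solutions of $(P_{\gamma_1})$, contradicting (ii); on the other hand $\overline\lambda\ge\gamma_1$, since if $\overline\lambda<\gamma_1$ then, by the a priori bounds of \cite{ArDeJeTa} on compact subintervals of $\,]0,\gamma_1[$, $u_{\lambda,2}$ stays bounded as $\lambda\uparrow\overline\lambda$ and, with $u_{\lambda,1}$, converges to a solution of $(P_{\overline\lambda})$, which by Theorem~\ref{thmlocal}(ii) must be $0$; a branch of nontrivial solutions of \eqref{eq 0} then bifurcates from $(\overline\lambda,0)$, but the linearisation of \eqref{eq 0} at $0$ being $-\Delta-\lambda c(\cdot)$ (the term $\mu|\nabla u|^2$ is of higher order), bifurcation from the trivial line occurs only at an eigenvalue of \eqref{eigenvaluep}, and the bifurcating solutions being non-negative (first step) it is the principal one, $\overline\lambda=\gamma_1$ — a contradiction. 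Hence, for each $\lambda\in\,]0,\gamma_1[$, \eqref{eq 0} has the two solutions $0$ and $u_{\lambda,2}$; moreover $u_{\lambda,2}\gg u_{\lambda,1}\ge0$ forces $u_{\lambda,2}\gneqq0$, and $\max_{\overline\Omega}u_{\lambda,2}\to+\infty$ as $\lambda\to0$ by Theorem~\ref{thmlocal}. (Equivalently, since $\int_\Omega h\varphi_1=0\ge0$, Corollary~\ref{Cor 4.1} places us in case (i) of Theorem~\ref{thm 0}, and one concludes on the continuum $\mathcal{C}^+$ using the first step and the a priori bounds.)

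For (iii) I would fix $\lambda>\gamma_1$, choose $\tilde h\in L^p(\Omega)$ with $\tilde h\gneqq0$ (e.g. $\tilde h=\varphi_1$), and for $k>0$ consider \eqref{eq cor} with $h$ replaced by $k\tilde h$. Then $(P_0)$ with $h=k\tilde h$ has no solution $\le0$ (any such $u_0$ would satisfy $-\Delta u_0=\mu|\nabla u_0|^2+k\tilde h\gneqq0$, hence $u_0\gg0$), so hypothesis (a) of Theorem~\ref{thm 3} holds; and for $k$ small the construction in the proof of Theorem~\ref{thm 4} yields a non-positive upper solution of $(P_{\lambda_0})$ with $h=k\tilde h$ for some $\lambda_0\ge0$, giving (b). By Theorem~\ref{thm 3} (equivalently Corollary~\ref{surprise}(iii)) there is $\underline\lambda(k)\in\,]\gamma_1,+\infty[$, with $\underline\lambda(k)\to\gamma_1$ as $k\to0^+$ (as noted before the statement), such that for $\lambda>\underline\lambda(k)$ the perturbed problem has solutions $u^k_{\lambda,1}\ll0$ and $u^k_{\lambda,1}\ll u^k_{\lambda,2}$; as $\lambda>\gamma_1$, these exist for all small $k$. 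Letting $k\to0^+$ and using the a priori bound on the nonpositive solutions $u^k_{\lambda,1}$ (for $\lambda$ fixed, $k$ small), a subsequence of $u^k_{\lambda,1}$ converges in $C^1_0(\overline\Omega)$ to a solution $u_{\lambda,2}\le0$ of \eqref{eq 0}; writing $w=-u_{\lambda,2}\ge0$ one has $-\Delta w+\mu|\nabla w|^2=\lambda cw\ge0$, i.e. $-\Delta w+b\cdot\nabla w\ge0$ with $b:=\mu\nabla w\in L^\infty(\Omega)^N$, so if $w\not\equiv0$ the strong maximum principle and Hopf's lemma give $w\gg0$, i.e. $u_{\lambda,2}\ll0$; with $u_{\lambda,1}\equiv0$ this is (iii). (Alternatively one argues by global bifurcation from the simple eigenvalue $(\gamma_1,0)$: the branch leaving $(\gamma_1,0)$ into $\{\lambda>\gamma_1\}$ consists near the bifurcation point of negative solutions, and being unbounded, with norm bounded on compact subintervals of $\,]\gamma_1,+\infty[$ and not reaching a higher eigenvalue, it projects onto $\,]\gamma_1,+\infty[$.)

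The main obstacle is the non-collapse of the nontrivial branches: in (i), that $u_{\lambda,2}$ persists up to $\lambda=\gamma_1$ (i.e. $\overline\lambda=\gamma_1$, reduced above to the eigenvalue/bifurcation dichotomy plus a priori bounds), and in (iii), that $u^k_{\lambda,1}\not\to0$ as $k\to0^+$. For the latter one exploits that for $\lambda>\gamma_1$ away from the eigenvalues of \eqref{eigenvaluep}, $u=0$ is an isolated solution of \eqref{eq 0} with a nonzero Leray--Schauder index, so the negative branch of the perturbed problems cannot shrink to $0$; quantitatively this uses the bifurcation-from-infinity structure and the monotonicity in $\lambda$ from Theorem~\ref{thm 3}. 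The routine verifications — the a priori bounds on compact $\lambda$-intervals, and $\underline\lambda(k)\to\gamma_1$ — follow the lines of \cite{ArDeJeTa} and the discussion preceding the statement.
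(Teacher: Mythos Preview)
Your treatment of (ii) and the sign argument for $\lambda<\gamma_1$ are correct and essentially what the paper does (it quotes \cite[Lemma~6.1]{ArDeJeTa} and Corollary~\ref{Cor 4.1}).

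For (i) and (iii), however, you take a markedly more indirect route than the paper, and in (iii) there is a genuine gap. The paper does \emph{not} go through Theorem~\ref{thmlocal} and then argue that $\overline\lambda=\gamma_1$; instead, for each fixed $\lambda\in\,]0,\gamma_1[$ it constructs an explicit strict upper solution $\beta\gg0$ by taking $\tilde\beta=\varepsilon\varphi_1$ (with $\varepsilon>0$ small enough that $\lambda\,(1+\mu_2 v)\ln(1+\mu_2 v)/\mu_2\le\gamma_1 v$ for $v\in[0,\varepsilon]$) and setting $\beta=\mu_2^{-1}\ln(1+\mu_2\tilde\beta)$. With the strict lower solution from Lemma~\ref{lem lower} one then runs the degree argument of Theorem~\ref{thm 1}. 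This avoids entirely your bifurcation/limit step ``$\overline\lambda=\gamma_1$'', which, while plausible (normalize $u_{\lambda,2}/\|u_{\lambda,2}\|$ and pass to the limit), you only sketch.

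For (iii) the paper is again direct: for a given $\lambda>\gamma_1$ it uses the anti-maximum principle (Proposition~\ref{anti}) to pick $\lambda_0\in\,]\gamma_1,\lambda]$ for which $-\Delta u=\lambda_0 c u+1$ has a solution $u\ll0$; then $\beta_0=\varepsilon u$, for $\varepsilon$ small, is a negative upper solution of $(P_{\lambda_0})$ (hence of $(P_\lambda)$), and the method of Theorem~\ref{thm 3} yields $u_{\lambda,2}\ll0$. Your perturbation scheme $h=k\tilde h$, $k\to0$, is much heavier and the step you flag as the ``main obstacle'' is a real one: showing $u^k_{\lambda,1}\not\to0$. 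Your index argument only covers $\lambda$ away from the higher eigenvalues of \eqref{eigenvaluep}, and the global bifurcation alternative would require proving that the negative branch stays sign-definite and does not return to the trivial line at a higher eigenvalue, which you do not address. The anti-maximum construction sidesteps all of this in one line.
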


\begin{figure}[t]
\begin{center}
\includegraphics[scale=0.3]{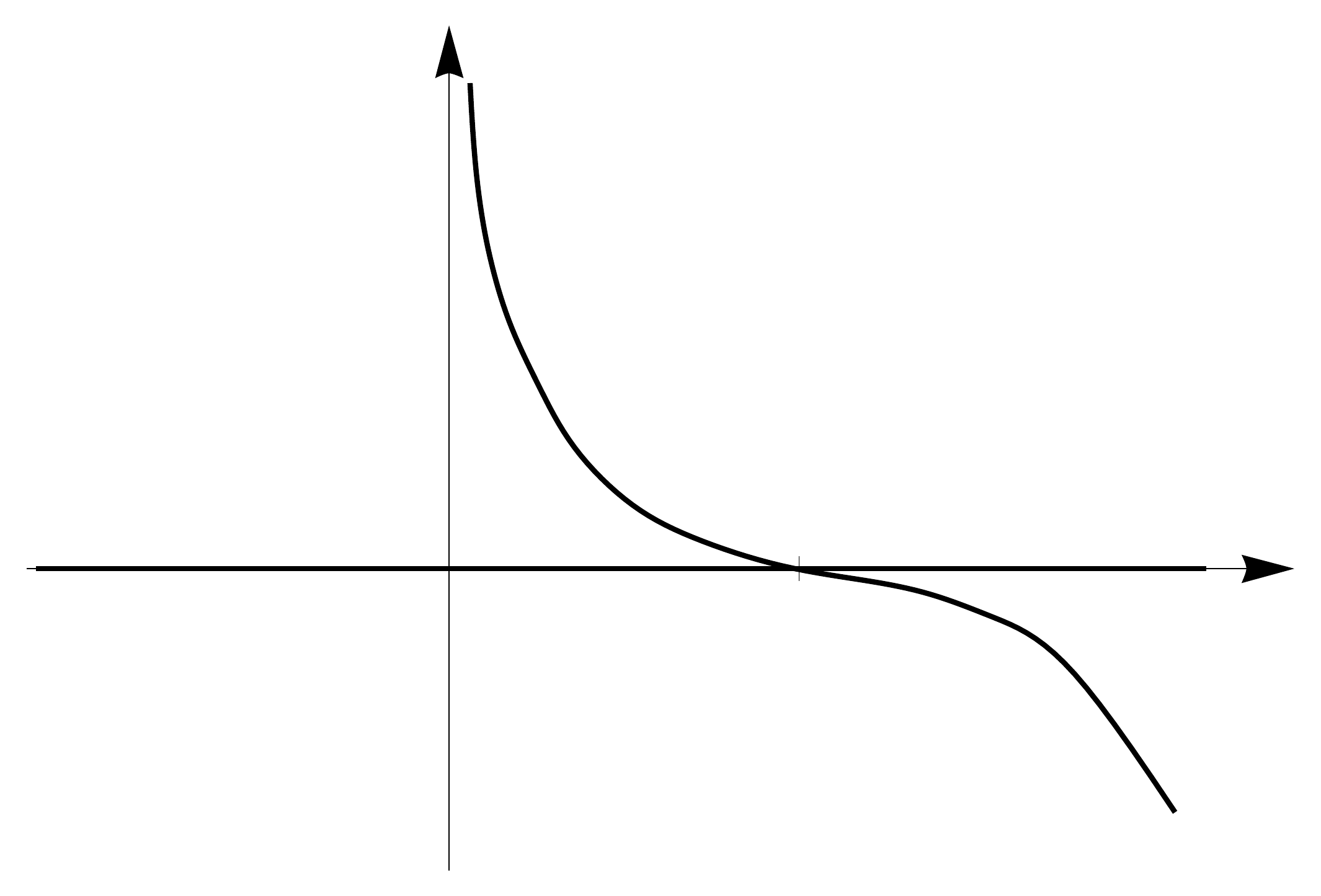}
\put(-10,53){$\lambda$}
\put(-78,53){$\gamma_1$}
\caption{Illustration of Theorem \ref{cas h0}}
\label{figdn4bis}
\end{center}
\end{figure}

\begin{remark}
\label{Parabolique}
Considering the solutions of $(P_{\lambda})$ as stationary solutions for the corresponding parabolic problem, assuming $(A)$ together with $\partial\Omega$ is of class $C^2$ and $c$, $h\in L^p(\Omega)$ with $p>N+2$, then, applying \cite[Corollary 2.34 and Proposition 2.41]{DeOm}, we can prove that, in the above results, the first solution $u_{\lambda,1}$ is ${\mathcal L}$-asymptotically stable from below and  $u_{\lambda,2}$ is ${\mathcal L}$-unstable from below. In the particular case of  Theorem \ref{thm 2}, 
as  $(P_{\lambda})$ has a unique negative solution $u_{\lambda,1}\ll u_0$, we have also $u_{\lambda,1}$ is ${\mathcal L}$-asymptotically stable. Fore more informations, see \cite{DeOm}. 
\end{remark}

Our existence results relies on the obtention  of a priori bounds on the solutions, see Lemma \ref{lem 4.1} and Theorem \ref{bounds1}. 
These results which are valid for arbitrary solutions use in a central way the assumption that $\mu(x) \geq \mu_1 >0$ for some $\mu_1 >0$. 
Removing this condition seems delicate and in that direction some results are obtained in \cite{So} for non-negative solutions.  
In \cite{So} it is also shown that some conditions are necessary to obtain a priori bounds for non-negative solutions. \medskip

In the case $\mu >0$ constant it is possible to precise the blow-up rate, as $\lambda \to 0^+$, of our solutions $u_{\lambda, 2}$ 
obtained in Theorems \ref{thmlocal}, \ref{thm 1},  \ref{thm 2} and \ref{cas h0}. As a by-product, we also obtain that the a priori 
estimates obtained in Theorem \ref{bounds1} are sharp. 
\medskip

The paper is organized as follows.  In Section \ref{Section1} we present some  preliminary results. Section \ref{Section2} is devoted to our a priori bounds results. 
In Section \ref{Section3} we prove our main results.  Section \ref{Section4} is devoted to the special case $\mu$ constant and in Section \ref{Section5} the reader can find  a list of open problems.
\medbreak

\noindent
{\bf Acknowledgments} The authors thank D. Mercier and C. Troestler for fruitful discussions on the interpretation of the results and for providing them the figures of the paper. The authors also thank warmly B. Sirakov for pointing to them a mistake in an earlier version of this work.
\smallskip

\noindent
{\bf Notations} For $v \in H_0^1(\Omega)$ we set $v^+ = \max\{0,v\}$ and $v^- = \max\{0, -v\}.$

\section{Preliminary results}
\label{Section1}

In our proofs we shall need some results on lower and upper solutions that we present here adapted to our setting. We consider the problem
\begin{equation}
\label{De-He}
\begin{array}{cl}
- \Delta u = f(x,u, \nabla u), &\mbox{in }\Omega
\\
u=0,&\mbox{on } \partial\Omega,
\end{array}
\end{equation}
where $f$ is an $L^p$-Carath\' eodory function with $p >N$ and solutions are sought in $W_0^{2,p}(\Omega)$.  We recall   that a {\it regular lower solution} (respectively a {\it regular upper solution}) 
of \eqref{De-He} is a function $\alpha$ (resp. $\beta$) in $W^{2,p}(\Omega)$ such that
$$
\begin{array}{cl}
- \Delta \alpha(x) \leq f(x,\alpha(x), \nabla \alpha(x)),& \mbox{for a.e. } x\in \Omega, 
\\
\alpha(x) \leq 0,& \mbox{for all } x\in \partial\Omega, 
\end{array}
$$
(respectively 
$$
\begin{array}{cl}
 - \Delta \beta(x) \geq f(x, \beta(x), \nabla \beta(x)), & \mbox{for a.e. } x\in \Omega,
\\
\beta(x) \geq 0,&\mbox{for all } x\in \partial\Omega).
\end{array}
$$
We define a {\it lower solution $\alpha$ of \eqref{De-He}} as $\alpha := \max\{\alpha_i\mid 1\leq i \leq k\}$ where $\alpha_1, \ldots,  \alpha_k$ are regular lower solutions of \eqref{De-He}.
Similarly,  an {\it upper solution $\beta$ of \eqref{De-He}} is defined as  $\beta=\min\{\beta_j\mid 1\leq j \leq l\}$ where $\beta_1, \ldots,  \beta_l$ are regular upper solutions of \eqref{De-He}.

\begin{remark} 
The set of functions $w$ such that $u \ll w \ll v$ is open
in
$C^1_0(\Omega)$
(the space  of the $C^1$-functions in $\overline\Omega$ which vanish on the boundary of $\Omega$).
\end{remark}

Problem \eqref{De-He} can be transformed into a fixed point problem. The operator
\begin{equation}
\label{defL}
{\mathcal L} : W^{2,p}_0(\Omega) \to L^p(\Omega) ;\,
u \mapsto -\Delta u 
\end{equation}
is a linear homeomorphism.

Since $f$ is an  $L^p$-Carath\'eodory function,  the operator
\begin{equation}
\label{defN}
{\mathcal N} : C_0^1(\overline{\Omega}) \to L^p(\Omega)\, ;\, u \mapsto f(.,u(.),\nabla u(.))
\end{equation}
is well defined, continuous and maps bounded sets to bounded sets.  Since
$p>N$,  as $W^{2,p}_0(\Omega)$ is compactly embedded in
$C^1_0(\overline \Omega)$, the
operator ${\mathcal M} : C^1_0(\overline\Omega) \to C^1_0(\overline\Omega)$
defined by
$$
 {\mathcal M} (u) = {\mathcal L}^{-1} {\mathcal N} u,
$$
where ${\mathcal L}$ and ${\mathcal N}$ are given respectively by \eqref{defL} and \eqref{defN}, is completely continuous and
the
problem \eqref{De-He} is equivalent
to
$$
 u = {\mathcal M} u.
$$

To be able to associate a degree to a pair of lower and upper solutions we also need to reinforce the definition.

\begin{definition}
A lower solution $\alpha$ of \eqref{De-He} is said to be {\it strict} if
every solution $u$ of \eqref{De-He} such that $\alpha\leq u$ on $\Omega$
satisfies $\alpha\ll u$.

In the same way a {\it strict upper solution} $\beta$ of \eqref{De-He} is an
upper solution such that every solution $u$ with $u\leq\beta$ is such that
$u \ll \beta$.
\end{definition}

Our main tool regarding the existence and characterizations of solutions of problem \eqref{De-He} by a lower and upper solutions approach is the following theorem. This result which can be obtained adapting some ideas from \cite{DeHa, DeOm} will be proved in the Appendix.

\begin{thm}
\label{sousDe}
Let $\Omega$ is a bounded domain in ${\mathbb R}^N$  with boundary $\partial \Omega$
of class $C^{1,1}$ and  $f$ be an
$L^p$-Carath\'eodory function with $p>N$.
  Assume that there exists a lower solution  $\alpha$ and an upper solution $\beta$  of \eqref{De-He} such that $\alpha \leq \beta$. Denote 
  $\alpha := \max\{\alpha_i\mid 1\leq i \leq k\}$ where $\alpha_1, \ldots,  \alpha_k$ are regular lower solutions of \eqref{De-He} and 
  $\beta=\min\{\beta_j\mid 1\leq j \leq l\}$ where $\beta_1, \ldots,  \beta_l$ are regular upper solutions of \eqref{De-He}.
    If there exists $K>0$ and $h \in L^p(\Omega)$
such that for a.e.\ $x \in \Omega$, all $u \in [\min\{\alpha_i\mid 1\leq i \leq k\},\max\{\beta_j\mid 1\leq j \leq l\}]$ and
all $\xi \in {\mathbb R}^N$,
\begin{equation}
 \label{Hf}
|f(x,u,\xi)| \leq h(x) + K |\xi|^2,
\end{equation}
then the problem \eqref{De-He} has at least one solution $u$
satisfying 
$$
\alpha \leq u  \leq \beta.
$$
Moreover, problem \eqref{De-He} has a minimal solution $u_{\min}$ and a maximal solution $u_{\max}$ in the sense that, $u_{\min}$ and $u_{\max}$ are solutions of \eqref{De-He} with $\alpha\leq u_{\min}  \leq u_{\max} \leq \beta$ and 
every solution $u$ of \eqref{De-He} with $\alpha \leq u \leq \beta$ satisfies $u_{\min} \leq u  \leq u_{\max}$.

If moreover $\alpha$ and $\beta$ are strict and satisfy $\alpha \ll
\beta$, then, there exists $R>0$ such that
$$
\deg(I-{\mathcal M}, {\mathcal S}) = 1,
$$
where
$$
{\mathcal S} =
\{u \in C^1_0(\overline\Omega) \mid \alpha \ll u \ll \beta, \quad \|u\|_{C^1}<R \}.
$$
\end{thm}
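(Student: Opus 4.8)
The plan is to proceed in three stages: first the existence of a solution between $\alpha$ and $\beta$, then the existence of minimal and maximal solutions, and finally the degree computation. For the existence part, the standard device is to truncate and regularize so that the growth in $f$ becomes subcritical and the classical Leray--Schauder machinery applies. Concretely, I would introduce the truncation $T(x,u) = \min\{\max\{u,\gamma(x)\},\delta(x)\}$ where $\gamma := \min_i \alpha_i$ and $\delta := \max_j \beta_j$ (the ``outer'' bounds, so that $[\gamma,\delta]$ contains $[\alpha,\beta]$ and all the regular sub/supersolutions), and replace the quadratic gradient term using the bound \eqref{Hf}. A convenient way to kill the quadratic growth is the exponential change of variable à la Boccardo--Murat--Puel: set $v = \Phi(u)$ with $\Phi' $ chosen so that $-\Delta v = g(x,v,\nabla v)$ with $g$ having at most linear growth in $\nabla v$ on the relevant range; alternatively one truncates $|\nabla u|^2$ at a large radius and recovers the a priori gradient bound from $\|u\|_\infty \le \|\gamma\|_\infty + \|\delta\|_\infty$ via the De Coster--Habets gradient estimate for $W^{2,p}$ solutions of equations with quadratic growth. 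Either way one gets a modified problem to which ${\mathcal M}$-type compactness applies, a fixed point $u$ exists by Schauder, and then the sub/supersolution inequalities plus the weak maximum principle (applied to each regular $\alpha_i$ and $\beta_j$ separately, using that $\alpha=\max_i\alpha_i$, $\beta=\min_j\beta_j$) force $\alpha \le u \le \beta$, so the truncation was inactive and $u$ solves \eqref{De-He}.

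For the minimal and maximal solutions I would run the classical monotone-iteration/Zorn argument in the ordered interval $[\alpha,\beta]$: the set $\mathcal{S}_{[\alpha,\beta]}$ of solutions in $[\alpha,\beta]$ is nonempty by the first part, and it is closed in $C^1_0(\overline\Omega)$ because ${\mathcal M}$ is completely continuous and the interval is bounded in $C^1_0$; given any two solutions $u_1,u_2\in[\alpha,\beta]$, the pair $\max\{u_1,u_2\}$ is a (generalized) lower solution and $\beta$ an upper solution, so there is a solution above both, and symmetrically $\min\{u_1,u_2\}$ and $\alpha$ give a solution below both. Combined with compactness this yields $u_{\min}$ and $u_{\max}$ with the stated extremality (every monotone sequence of solutions converges in $C^1_0$ to a solution by compactness of ${\mathcal M}$, so one can extract the extremal elements directly).

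The degree computation is where I expect the real work to lie, and it is the step I would flag as the main obstacle. Assuming $\alpha,\beta$ strict with $\alpha\ll\beta$, one wants $\deg(I-{\mathcal M},{\mathcal S})=1$ on the open set ${\mathcal S}=\{\alpha\ll u\ll\beta,\ \|u\|_{C^1}<R\}$. The plan: first note ${\mathcal S}$ is open in $C^1_0(\overline\Omega)$ by the Remark following the definition of $\ll$ together with the ball constraint, and for $R$ large enough (larger than the a priori $C^1$-bound for all solutions in $[\alpha,\beta]$, which exists by \eqref{Hf} and the quadratic-growth gradient estimate) the ball constraint is never active on solutions, so no zeros of $I-{\mathcal M}$ sit on $\{\|u\|_{C^1}=R\}\cap\overline{\{\alpha\le u\le\beta\}}$. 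Next, strictness guarantees that every fixed point of ${\mathcal M}$ in $\overline{\{\alpha\le u\le\beta\}}$ already lies in the \emph{open} set ${\mathcal S}$ (this is exactly what ``strict'' buys: solutions comparable to $\alpha$ are $\gg\alpha$, hence not on $\partial{\mathcal S}$), so ${\mathcal M}$ has no fixed point on $\partial{\mathcal S}$ and the degree is well defined. Finally, to evaluate it, replace ${\mathcal M}$ by the modified completely continuous operator $\widetilde{\mathcal M}$ coming from the truncated problem above: $\widetilde{\mathcal M}$ has \emph{all} its fixed points inside ${\mathcal S}$ (any fixed point lies in $[\alpha,\beta]$ by the maximum-principle argument, then in ${\mathcal S}$ by strictness) and maps a large ball into itself, so by the homotopy invariance on a big ball $\deg(I-\widetilde{\mathcal M},B_R)=1$, and by excision $\deg(I-\widetilde{\mathcal M},{\mathcal S})=1$; then a homotopy $t{\mathcal M}+(1-t)\widetilde{\mathcal M}$ — whose intermediate problems still have \eqref{Hf}-type bounds and still admit $\alpha,\beta$ as strict sub/supersolutions, hence no fixed points on $\partial{\mathcal S}$ — transfers this to $\deg(I-{\mathcal M},{\mathcal S})=1$. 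The delicate points to check carefully are: that the homotopy parameter does not destroy the a priori bound or the strictness (so one may need to arrange the truncation to agree with the original nonlinearity on $[\alpha,\beta]$ and only modify it outside), and that the gradient a priori estimate is uniform along the homotopy; both are handled by keeping $K$ and $h$ in \eqref{Hf} fixed along the deformation. I would refer to \cite{DeHa,DeOm} for the analogous computations and adapt them, since the structure is entirely parallel.
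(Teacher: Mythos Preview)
Your outline matches the paper's three-part structure (existence via a modified problem, extremal solutions by compactness and directedness of the solution set, degree by excision) and is correct. Two places where the paper's execution is tighter than your sketch: (i) the modification of $f$ in Part~1 is not a plain truncation in $u$ --- the paper replaces $f$ for $s<\alpha(x)$ by $\max_i\bigl[\overline f(x,\alpha_i(x),\xi)+\omega_{1,i}(x,\alpha_i(x)-s)\bigr]$, where $\omega_{1,i}(x,\delta)=\max_{|\eta|\le\delta}|\overline f(x,\alpha_i,\nabla\alpha_i+\eta)-\overline f(x,\alpha_i,\nabla\alpha_i)|$ is a gradient modulus; this added $\omega$ term is precisely what makes $-\Delta(u-\alpha_i)\geq 0$ in a ball around an interior minimum of $u-\alpha_i$, so your deferral to \cite{DeHa,DeOm} for this step is warranted but not optional --- a naive truncation will not give the comparison; (ii) for the degree, the homotopy $t\mathcal M+(1-t)\widetilde{\mathcal M}$ and the attendant worries about preserving strictness along it are unnecessary: since the modification agrees with $f$ on $\{\alpha\le u\le\beta,\ |\nabla u|\le\bar R\}$, the operators $\mathcal M$ and $\overline{\mathcal M}$ coincide on $\overline{\mathcal S}$, hence $\deg(I-\mathcal M,\mathcal S)=\deg(I-\overline{\mathcal M},\mathcal S)$ directly, and excision from the large ball (on which $\deg(I-\overline{\mathcal M},B(0,\bar R))=1$ because $\overline{\mathcal M}$ has relatively compact range) gives the value $1$.
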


\begin{remark}\label{rem-utile}
If $\alpha$ and $\beta$ are respectively strict lower and upper solutions of \eqref{De-He} with $\alpha\leq \beta$ then $\alpha\ll\beta$. 
Indeed from the first part of Theorem \ref{sousDe} we deduce, the existence of a solution $u$ with  $\alpha\leq u \leq \beta$. 
By definition of strict lower and upper solutions, we obtain $\alpha\ll u \ll\beta$ and hence $\alpha\ll\beta$.
\end{remark}

\begin{remark}
We shall apply Theorem \ref{sousDe} with
${\mathcal N}(u) = \lambda c(x) u + \mu(x) |\nabla u|^2+ h(x)$.
Hence, as we are concerned with the $\lambda$-dependance, we will denote the fixed point operator ${\mathcal M}_{\lambda}$ instead of ${\mathcal M}$.
\end{remark}

Our assumption (A) implies that the following regularity result applies to problem $(P_{\lambda})$. 

\begin{thm}
\label{regularity}
Let $\Omega$ is a bounded domain in ${\mathbb R}^N$ with boundary $\partial \Omega$
of class ${C}^{1,1}$, $c\in L^p(\Omega)$, $h\in L^p(\Omega)$ with $p>N$ and $\mu\in L^{\infty}(\Omega)$.
Let $u$ be a solution of
\begin{equation}
\label{1111}
- \Delta u = c(x)u+\mu(x)|\nabla u|^2+h(x), \quad u \in H^1_0(\Omega) \cap L^{\infty}(\Omega).
\end{equation}
Then $u\in W_0^{2,p}(\Omega) \subset {C}_0^1(\overline\Omega)$.
\end{thm}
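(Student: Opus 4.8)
The plan is to run a bootstrap argument, starting from the information $u\in H^1_0(\Omega)\cap L^\infty(\Omega)$ and iterating $L^q$-elliptic estimates ($W^{2,q}$ Calder\'on--Zygmund regularity on the $C^{1,1}$ domain $\Omega$) together with Sobolev embeddings, until the right-hand side of \eqref{1111} is known to lie in $L^p(\Omega)$. The first, easy, observation is that since $u\in L^\infty(\Omega)$ and $c\in L^p(\Omega)$ we have $c(x)u\in L^p(\Omega)$, so together with $h\in L^p(\Omega)$ the only term whose integrability is not yet controlled is $\mu(x)|\nabla u|^2$, which a priori belongs only to $L^1(\Omega)$ because $\nabla u\in L^2(\Omega)$. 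Thus the whole difficulty is to improve the integrability of $\nabla u$.

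Granting for the moment a foothold of the form $u\in W^{2,m_0}_0(\Omega)$ for some $m_0>N/2$, the remainder is a finite iteration. Indeed, if $u\in W^{2,m}_0(\Omega)$ with $N/2<m\le N$, then Sobolev embedding gives $\nabla u\in L^{m^*}(\Omega)$ with $m^*=\tfrac{Nm}{N-m}>N$, hence $|\nabla u|^2\in L^{m^*/2}(\Omega)$ with $m^*/2>N/2$; combined with $\lambda c u+h\in L^p(\Omega)$ and $\mu\in L^\infty(\Omega)$ this yields $-\Delta u\in L^{m_1}(\Omega)$ with $m_1:=\min\{p,\,m^*/2\}$, and the Calder\'on--Zygmund estimate gives $u\in W^{2,m_1}_0(\Omega)$. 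Since $\tfrac{m^*}{2}=\tfrac{Nm}{2(N-m)}>m$ for every $m>N/2$, the exponent strictly increases at each step as long as it has not reached $p$; moreover the map $m\mapsto\tfrac{Nm}{2(N-m)}$ has no fixed point in $(N/2,N)$ and blows up as $m\to N^-$, so the (capped) iteration reaches $p$, or exceeds $N$, after finitely many steps. In the latter case $W^{2,q}(\Omega)\hookrightarrow C^1(\overline\Omega)$ for $q>N$ gives $\nabla u\in L^\infty(\Omega)$, whence $-\Delta u\in L^p(\Omega)$ and $u\in W^{2,p}_0(\Omega)$. Finally, since $p>N$, the Sobolev--Morrey embedding $W^{2,p}(\Omega)\hookrightarrow C^1(\overline\Omega)$ and the vanishing trace of $u$ give $u\in C^1_0(\overline\Omega)$.

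The genuinely delicate point, and the one I expect to be the main obstacle, is precisely the initial foothold $u\in W^{2,m_0}_0(\Omega)$ with $m_0>N/2$, which amounts to gaining some H\"older continuity for the bounded solution $u$ in the presence of the critical term $\mu|\nabla u|^2$ and cannot be extracted by a naive use of elliptic estimates. I would obtain it either by invoking the regularity theory already available for equations of type \eqref{1111} under hypotheses weaker than (A) (in \cite{ArDeJeTa} the data are only required in $L^{m}(\Omega)$ with $m>N/2$, and $L^p(\Omega)\subset L^m(\Omega)$ on the bounded domain $\Omega$ whenever $N/2<m\le p$), or, self-containedly, by a De Giorgi iteration on super- and sub-level sets of $u$ using exponential test functions of the form $\eta^2\,\mathrm{sgn}(u-k)\,(e^{\gamma|u-k|}-1)$ to absorb the quadratic gradient term, which yields the Morrey-type decay needed for $u\in C^{0,\alpha}(\overline\Omega)$, hence $\nabla u\in L^{2+\varepsilon}(\Omega)$; boundary behaviour is handled by flattening $\partial\Omega$, which is licit since it is of class $C^{1,1}$.

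It is worth noting that when $\mu\equiv\mu_0>0$ is constant there is a shortcut avoiding the delicate step altogether: the substitution $v=\tfrac1{\mu_0}\bigl(e^{\mu_0 u}-1\bigr)$ transforms \eqref{1111} into $-\Delta v=e^{\mu_0 u}\bigl(\lambda c(x)u+h(x)\bigr)$, whose right-hand side already lies in $L^p(\Omega)$ by the boundedness of $u$; then $v\in W^{2,p}_0(\Omega)\subset C^1_0(\overline\Omega)$ and $u=\tfrac1{\mu_0}\log(1+\mu_0 v)$ inherits the same regularity. For non-constant $\mu$ this trick fails because no change of unknown depending on $u$ alone removes the quadratic term, which is exactly why one must pay the price of the H\"older-continuity argument above.
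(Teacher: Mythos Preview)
Your bootstrap above the threshold $m>N/2$ is fine, and you correctly locate the entire difficulty in the foothold. But the foothold you sketch does not close. The step ``$u\in C^{0,\alpha}(\overline\Omega)$, hence $\nabla u\in L^{2+\varepsilon}(\Omega)$'' is simply false: H\"older continuity of $u$ says nothing about the integrability of $\nabla u$. One can obtain $\nabla u\in L^{2+\varepsilon}$ for bounded solutions via a reverse-H\"older (Gehring) argument, but that only yields $|\nabla u|^2\in L^{1+\varepsilon/2}$, hence $u\in W^{2,m_0}$ with $m_0=1+\varepsilon/2$. For $N\ge 3$ this is strictly below $N/2$, and your own iteration map $m\mapsto \tfrac{Nm}{2(N-m)}$ satisfies $\tfrac{Nm}{2(N-m)}<m$ on $(0,N/2)$ (the fixed point $N/2$ repels from below), so the bootstrap never starts. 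Citing \cite{ArDeJeTa} does not help either: that paper works in $H^1_0\cap L^\infty$ and does not provide the $W^{2,m}$ input you need. This is precisely the content of the paper's remark that the theorem ``is not a simple consequence of classical bootstrap arguments.''

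The paper bypasses bootstrap entirely with a uniqueness argument. Rewrite \eqref{1111} as $-\Delta v=-v+\mu(x)|\nabla v|^2+g(x)$ with $g:=cu+u+h\in L^p(\Omega)$; for this coercive problem, uniqueness in $H^1_0\cap L^\infty$ holds by \cite{ArDeJeTa2}. Hence it suffices to \emph{construct} one solution in $W^{2,p}_0(\Omega)$, which must then coincide with $u$. The construction is by ordered lower and upper solutions (Theorem~\ref{sousDe}), and here your last paragraph turns out to be the engine rather than a digression: replacing $\mu(x)$ by the constant $\bar\mu=\|\mu\|_\infty$ and splitting $g=g^+-g^-$, the Cole--Hopf change $w\mapsto\frac{1}{\bar\mu}\ln(1+\bar\mu w)$ converts each one-signed auxiliary problem into a semilinear equation whose $W^{2,p}$ solvability follows from \cite{ArDeJeTa} and ordinary bootstrap; the resulting $W^{2,p}$ functions furnish $\alpha\le 0\le\beta$. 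So the constant-$\mu$ trick is used not on \eqref{1111} itself but to manufacture barriers, and uniqueness transfers the regularity back to $u$.
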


\begin{remark}
This result is not a simple consequence of classical bootstrap arguments as, for $u\in H^1_0(\Omega)\cap L^{\infty}(\Omega)$, $\mu |\nabla u|^2\in L^1(\Omega)$ which does not allow to start a bootstrap process.
\end{remark}

\begin{remark}
Observe that any solution $u \in C_0^1(\overline\Omega)$ of \eqref{1111} belongs to $W_0^{2,p}(\Omega)$.
\end{remark}

\begin{proof}
Let $u\in H^1_0(\Omega) \cap L^{\infty}(\Omega)$ and define the function $g= c\,u+u+h$. Observe that $g\in L^p(\Omega)$ with $p>N$ and $u$ is solution of 
\begin{equation}
\label{2222}
- \Delta v = - v + \mu(x)|\nabla v|^2 + g(x), \quad v \in H^1_0(\Omega) \cap L^{\infty}(\Omega).
\end{equation}

Let us prove that this problem has a solution $v\in W_0^{2,p}(\Omega).$
By uniqueness of the solution of \eqref{2222} in 
$H^1_0(\Omega) \cap L^{\infty}(\Omega)$ (see \cite[Theorem 1.1]{ArDeJeTa2}), we obtain that $v=u$ and hence $u\in W_0^{2,p}(\Omega)$.
To prove that this problem has a solution $v\in W_0^{2,p}(\Omega)$,  we shall apply Theorem \ref{sousDe}. Thus we need to prove that \eqref{2222} has a lower $\alpha$ and an upper solution $\beta$ with $\alpha\leq \beta$. \medskip

We set $\overline{\mu} = ||\mu||_{\infty}$. Clearly any solution of 
\begin{equation}
\label{1}
\begin{array}{cl}
- \Delta u = -u + \overline{\mu} |\nabla u|^2 + g^+(x), & \mbox{ in } \Omega,
\\
u=0, &\mbox{ on } \partial \Omega,
\end{array}
\end{equation}
is an upper solution of \eqref{2222} and   
any solution of
\begin{equation}
\label{222}
\begin{array}{cl}
- \Delta u = -u - \overline{\mu} |\nabla u|^2 - g^-(x),  & \mbox{ in } \Omega,
\\
u=0, &\mbox{ on } \partial \Omega,
\end{array}
\end{equation}
is a lower solution of \eqref{2222}. 
Now if $w \in W^{2,p}_0(\Omega) $ is a solution of 
\begin{equation}
\label{3b}
\begin{array}{cl}
- \Delta w = -w + \overline{\mu} |\nabla w|^2 + g^-(x),  & \mbox{ in } \Omega,
\\
w=0, &\mbox{ on } \partial \Omega,
\end{array}
\end{equation}
then $u = - w$ satisfies (\ref{222}). 
Thus if we find a non-negative solution $u_1 \in W^{2,p}_0(\Omega)$ of \eqref{1} and a non-negative solution $u_2 \in W^{2,p}_0(\Omega)$ 
of \eqref{3b} then, setting $\beta = u_1$ and $\alpha = -u_2$, we have the required couple of lower and upper solutions required to apply Theorem \ref{sousDe}. 

Let us construct $u_1$, the construction of $u_2$ being similar.
Let
$w_1 \in H^1_0(\Omega)$ be the non-negative solution of
$$
\begin{array}{cl}
-\Delta w_1 = \overline{\mu} g^+(x) w_1-m(w_1)+g^+, &\mbox{ in } \Omega,
\\
w_1=0,  &\mbox{ on } \partial \Omega
\end{array}
$$
where
\begin{equation}
\label{defg}
\begin{array}{ll}
m(s) = 
\left\{
\begin{array}{ll} 
\text{$\frac{1}{\overline{\mu}} (1+ \overline{\mu} s) \ln (1+ \overline{\mu} s)$},   &\mbox{if } \quad s \geq 0,
\\
\text{$-\frac{1}{\overline{\mu}} (1-\overline{\mu} s) \ln (1- \overline{\mu} s)$},   &\mbox{if } \quad s < 0
\end{array}
\right.
\end{array}
\end{equation}
given by \cite[Lemma 3.3]{ArDeJeTa}. By \cite[Lemma  3.22]{Tr87} and a bootstrap argument, it is easy to prove that $w_1\in W^{2,p}(\Omega)$.
Hence 
$$ 
u_1=\frac{\ln(\overline{\mu} w_1 +1)}{\overline{\mu}}
\in W^{2,p}(\Omega)
$$
and one readily shows that  $u_1\geq 0$ is a solution of 
\eqref{1}. 
\end{proof}

\begin{prop}
\label{comparison}
Under assumption (A) if $\alpha$ is a lower solution of $(P_{0})$ and $\beta$ an upper solution of $(P_0)$ then $\alpha \leq \beta$.
\end{prop}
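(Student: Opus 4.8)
The plan is to reduce the statement to a linear comparison principle and then invoke the weak maximum principle. First I would unwind the definitions: a lower solution of $(P_0)$ is $\alpha=\max\{\alpha_i\mid 1\le i\le k\}$ with each $\alpha_i$ a regular lower solution, and an upper solution is $\beta=\min\{\beta_j\mid 1\le j\le l\}$ with each $\beta_j$ a regular upper solution. Since $\alpha\le\beta$ on $\Omega$ holds precisely when $\alpha_i\le\beta_j$ on $\Omega$ for every pair $(i,j)$, it suffices to treat the case where both $\alpha$ and $\beta$ are regular. Recall that then $\alpha,\beta\in W^{2,p}(\Omega)\subset C^1(\overline\Omega)$ because $p>N$, so in particular $\nabla\alpha$ and $\nabla\beta$ are bounded on $\overline\Omega$.

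Next I would set $w:=\alpha-\beta\in W^{2,p}(\Omega)$ and subtract the defining inequalities for $(P_0)$, namely
\[
-\Delta\alpha\le \mu(x)|\nabla\alpha|^2+h(x),\qquad -\Delta\beta\ge \mu(x)|\nabla\beta|^2+h(x)\quad\text{a.e. in }\Omega .
\]
The datum $h$ cancels, and using the identity $|\nabla\alpha|^2-|\nabla\beta|^2=(\nabla\alpha+\nabla\beta)\cdot(\nabla\alpha-\nabla\beta)$ one obtains
\[
-\Delta w-b(x)\cdot\nabla w\le 0\quad\text{a.e. in }\Omega,\qquad b:=\mu\,(\nabla\alpha+\nabla\beta).
\]
Here $b\in L^\infty(\Omega;\R^N)$ since $\mu\in L^\infty(\Omega)$ and $\nabla\alpha,\nabla\beta\in C(\overline\Omega)$, and $w\le 0$ on $\partial\Omega$ because $\alpha\le 0\le\beta$ there.

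Finally, $w\in W^{2,p}(\Omega)$ with $p>N$ is a strong subsolution of the uniformly elliptic operator $L w:=-\Delta w-b\cdot\nabla w$, which has bounded first-order coefficients and no zeroth-order term; combined with $w\le 0$ on $\partial\Omega$, the weak maximum principle applied to strong ($W^{2,p}$) solutions yields $\sup_\Omega w\le \max_{\partial\Omega}w^+=0$, that is, $\alpha\le\beta$ in $\Omega$. I expect the only delicate point to be checking that $L$ meets the hypotheses of the maximum principle --- the boundedness of the drift $b$ (immediate from $\mu\in L^\infty$ and $\alpha,\beta\in C^1(\overline\Omega)$) and, crucially, the absence of a positive zeroth-order coefficient, which is precisely what makes the argument work for $(P_0)$ and would fail verbatim for $(P_\lambda)$ with $\lambda c\gneqq 0$; the remaining manipulations are routine.
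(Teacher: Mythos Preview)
Your argument is correct. The reduction to regular lower and upper solutions is exactly the one the paper makes, but from that point the paper simply observes that $\alpha_i,\beta_j\in H^1(\Omega)\cap W^{1,N}_{loc}(\Omega)\cap C(\overline\Omega)$ and invokes the comparison lemma \cite[Lemma~2.2]{ArDeJeTa2}, which is stated and proved under weaker regularity. You instead exploit the full $W^{2,p}$ regularity (with $p>N$) available under assumption~(A): this makes the drift $b=\mu(\nabla\alpha+\nabla\beta)$ bounded and lets you apply the classical weak maximum principle for strong subsolutions directly to $w=\alpha-\beta$. Your route is more elementary and self-contained in the present setting; the paper's route has the advantage of covering the weaker regularity framework of \cite{ArDeJeTa,ArDeJeTa2}, where the gradients need not be bounded and a more delicate test-function argument is required. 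Your closing remark that the absence of a zeroth-order term is what makes this work for $(P_0)$ is also on point.
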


\begin{proof}
Since $\alpha=\max\{\alpha_i\mid 1\leq i \leq k\}$, $\beta=\min\{\beta_j\mid 1\leq j \leq l\}$ with $\alpha_i$ and $\beta_j$ regular lower and upper solutions in  
$W^{2,p}(\Omega)$, the functions $\alpha_i$ and $\beta_j$ belong to  $H^1(\Omega) \cap W_{loc}^{1,N}(\Omega) \cap C(\overline \Omega) $ 
and we conclude by \cite[Lemma 2.2]{ArDeJeTa2}.
\end{proof}

The following estimates will also be useful.

\begin{lem}[Nagumo Lemma]
\label{lem 2}
Let $p>N$, $h\in L^p(\Omega, \mathbb R^+)$, $K>0$, $R>0$. Then there exists $C>0$ such that, for all $u\in W^{2,p}(\Omega)$ satisfying
$$
\begin{array}{cl}
|\Delta u|\leq h(x)+K|\nabla u|^2, &\mbox{a.e. in }\Omega,
\\
u=0,  &\mbox{on }\partial \Omega,
\end{array}
$$
and
$$
\|u\|_{\infty}\leq R,
$$
we have
$$
\|u\|_{W^{2,p}}\leq C.
$$
\end{lem}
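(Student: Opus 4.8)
The plan is to prove the Nagumo Lemma by a standard interior-gradient-estimate-plus-bootstrap argument, exploiting the fact that the $L^\infty$-bound on $u$ together with the quadratic-in-gradient control on $\Delta u$ forces an $L^\infty$-bound on $\nabla u$. First I would recall the classical Nagumo trick: since $u=0$ on $\partial\Omega$ and $\|u\|_\infty\le R$, one controls $|\nabla u|$ pointwise via an auxiliary scalar function. Concretely, with $M:=2KR$, consider the function $\varphi(t)=\frac1K(e^{Kt}-1)$ (or work with $w=e^{K u}$ and $w=e^{-Ku}$); a computation using $|\Delta u|\le h(x)+K|\nabla u|^2$ shows that the quantities $e^{\pm Ku}$ satisfy a linear elliptic inequality $|\Delta(e^{\pm Ku})|\le C(R,K)\,(h(x)+1)$ in the sense that the bad quadratic term is absorbed. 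Since $h\in L^p(\Omega)$ with $p>N$, elliptic $L^p$-regularity (Agmon–Douglis–Nirenberg / Calderón–Zygmund) gives $e^{\pm Ku}\in W^{2,p}(\Omega)$ with norm bounded by a constant depending only on $p,N,\Omega,K,R$ and $\|h\|_{L^p}$; by the Sobolev embedding $W^{2,p}(\Omega)\hookrightarrow C^1(\overline\Omega)$ this yields $\|\nabla u\|_\infty\le C_1=C_1(p,N,\Omega,K,R,\|h\|_{L^p})$.

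Once the gradient is bounded in $L^\infty$, the proof finishes immediately: the right-hand side $f_0(x):=\Delta u$ satisfies $|f_0(x)|\le h(x)+K C_1^2$ a.e., hence $f_0\in L^p(\Omega)$ with $\|f_0\|_{L^p}\le \|h\|_{L^p}+KC_1^2|\Omega|^{1/p}$. Applying the $L^p$-a-priori estimate for the Dirichlet problem $-\Delta u=f_0$, $u|_{\partial\Omega}=0$ (valid since $\partial\Omega$ is $C^{1,1}$), namely $\|u\|_{W^{2,p}}\le C\|f_0\|_{L^p}$, gives the desired bound $\|u\|_{W^{2,p}}\le C$ with $C$ depending only on the stated data $p$, $h$ (through $\|h\|_{L^p}$), $K$, $R$ and $\Omega$. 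I would present this as two displayed inequalities and one invocation of elliptic regularity, without grinding through the exponential computation.

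The step I expect to be the main obstacle is making the ``absorption'' of the quadratic term fully rigorous at the $W^{2,p}$ level rather than merely pointwise a.e.; one must know a priori that $e^{\pm Ku}\in W^{2,p}$ (or at least $W^{1,2}\cap L^\infty$ with distributional Laplacian in $L^p$) before quoting the regularity estimate, which requires a short chaining argument: first note $u\in W^{2,p}$ is given, so $e^{\pm Ku}\in W^{1,q}$ for every $q$, then compute $\Delta(e^{\pm Ku})=\pm K e^{\pm Ku}(\Delta u\mp K|\nabla u|^2)$ and observe that by hypothesis $\Delta u\mp K|\nabla u|^2$ is bounded above/below appropriately so that $|\Delta(e^{\pm Ku})|\le 2K e^{KR}h(x)\in L^p$, whence the Calderón–Zygmund estimate applies legitimately. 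Alternatively, and perhaps more cleanly, one can cite directly a known interior–global gradient estimate for solutions of $|\Delta u|\le h+K|\nabla u|^2$ with $h\in L^p$, $p>N$ (this is exactly the type of estimate underlying the regularity theory already used in Theorem \ref{regularity} and in \cite{ArDeJeTa}), and then only the final $L^p$-bootstrap remains. I would choose whichever of these two routes keeps the write-up shortest, most likely the exponential-substitution route since it is self-contained and uses only ADN regularity plus Sobolev embedding.
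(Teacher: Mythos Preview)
The paper does not actually prove this lemma; it simply cites \cite[Lemma 5.10]{Tr87}. So your ``alternative'' route --- quoting a known gradient estimate and then bootstrapping --- is precisely what the paper does.

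Your primary route, however, has a genuine gap. The exponential substitution absorbs the quadratic term only \emph{one-sidedly}. Indeed
\[
\Delta(e^{Ku}) = Ke^{Ku}\bigl(\Delta u + K|\nabla u|^{2}\bigr),
\]
and from $|\Delta u|\le h+K|\nabla u|^{2}$ you get $\Delta u + K|\nabla u|^{2}\ge -h$, which is gradient-free, but the other direction gives only $\Delta u + K|\nabla u|^{2}\le h + 2K|\nabla u|^{2}$, which still carries $|\nabla u|^{2}$. The same asymmetry occurs for $e^{-Ku}$: you obtain $\Delta(e^{-Ku})\ge -Ke^{KR}h$ but no gradient-free upper bound. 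Hence the claimed two-sided control $|\Delta(e^{\pm Ku})|\le 2Ke^{KR}h$ is false, and you cannot invoke Calder\'on--Zygmund to put $e^{\pm Ku}$ in $W^{2,p}$ with a uniform bound.

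What survives from the exponential trick are the one-sided inequalities $-\Delta(e^{\pm Ku})\le Ke^{KR}h$; these are useful for comparison arguments but do not by themselves yield $W^{2,p}$ or $C^{1}$ bounds (a function with Laplacian bounded only from below can be quite irregular). Extracting a genuine $L^{\infty}$ gradient bound from the hypothesis $|\Delta u|\le h+K|\nabla u|^{2}$, $\|u\|_{\infty}\le R$, requires a more delicate argument (barrier constructions near the boundary combined with interior estimates, or a Bernstein-type computation), which is exactly the content of the cited lemma in Troianiello. Once that $L^{\infty}$ gradient bound is in hand, your final bootstrap step is correct.
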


\begin{proof} see \cite[Lemma 5.10]{Tr87}.
\end{proof}

\begin{lem}
\label{Sirakov}
Assume that $c, h \in L^q(\Omega)$ for some $q > \frac{N}{2}$. Then if $u \in H^1_0(\Omega)$ is solution of
$$ 
- \Delta u \leq c(x) u + h(x), \qquad (\mbox{resp. }   - \Delta u \geq c(x) u + h(x))
$$
in a weak sense,
then $u$ is bounded above (resp. below) and
$$
\sup_{\Omega}u^+  \leq C ( \|u^+ \|_2 + \|h\|_q), 
\qquad
 (\mbox{resp. }   
\sup_{\Omega} u^- \leq C ( \|u^-\|_2 + \|h\|_q)),
$$
where $C>0$ depends on $N, q, |\Omega|$ and $\|c\|_q$.
\end{lem}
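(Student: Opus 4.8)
The plan is to prove the estimate for $u^+$ only; the statement for $u^-$ then follows by applying it to $-u$, which is a weak subsolution of $-\Delta(-u)\le c(x)(-u)+(-h)(x)$ with $\|-h\|_q=\|h\|_q$ and $(-u)^+=u^-$. Moreover I would first normalize: setting $M:=\|u^+\|_2+\|h\|_q$, the case $M=0$ forces $u^+\equiv0$, and otherwise I replace $u$ by $u/M$ and $h$ by $h/M$, which keeps the same $c$, preserves the differential inequality, and gives $\|u^+\|_2\le1$, $\|h\|_q\le1$. It is then enough to bound $\sup_\Omega u^+$ by a constant $C$ depending only on $N$, $q$, $|\Omega|$ and $\|c\|_q$, since undoing the scaling restores $\sup_\Omega u^+\le C(\|u^+\|_2+\|h\|_q)$.

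Write $v:=u^+\in H^1_0(\Omega)$ and $A_k:=\{x\in\Omega:v(x)>k\}$ for $k\ge0$. Testing the weak inequality against $\varphi=(v-k)^+$ (an admissible test function for $k\ge0$, after the usual truncation-and-limit if the test space is $H^1_0\cap L^\infty$) and using $A_k\subseteq\{u>0\}$, $\nabla u=\nabla v$ a.e.\ on $\{u>0\}$, $cu+h\le c^+v+h^+$ there, and $v=(v-k)^++k$ on $A_k$, I obtain
\[
\int_\Omega|\nabla(v-k)^+|^2\,dx\ \le\ \int_{A_k}c^+\big((v-k)^+\big)^2\,dx+\int_{A_k}\big(kc^++h^+\big)(v-k)^+\,dx.
\]
By H\"older (exponents $q,q'$), the Sobolev inequality $\|w\|_{2^*}\le C_S\|\nabla w\|_2$ with $2^*=\tfrac{2N}{N-2}$, and the inclusions $L^{2^*}(A_k)\hookrightarrow L^{2q'}(A_k),L^{q'}(A_k)$ (valid since $q\ge N/2$), the first term is at most $C_S^2\|c\|_q|A_k|^{2\sigma}\|\nabla(v-k)^+\|_2^2$ with $\sigma=\tfrac{2q-N}{2qN}>0$, and the second at most $C_S(k\|c\|_q+\|h\|_q)|A_k|^{\tau}\|\nabla(v-k)^+\|_2$ with $\tau=\tfrac1{q'}-\tfrac1{2^*}>0$. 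Since $|A_k|\le\|v\|_1/k\le|\Omega|^{1/2}/k\to0$, I can fix a level $k_1=k_1(N,q,|\Omega|,\|c\|_q)$ so large that $C_S^2\|c\|_q|A_{k_1}|^{2\sigma}\le\tfrac12$, absorb the first term into the left side for every $k\ge k_1$, and deduce $\|\nabla(v-k)^+\|_2\le2C_S(k\|c\|_q+\|h\|_q)|A_k|^{\tau}$ for $k\ge k_1$.

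Combining this with $(h-k)|A_h|^{1/2^*}\le\|(v-k)^+\|_{2^*}\le C_S\|\nabla(v-k)^+\|_2$ for $h>k\ge k_1$ gives the Stampacchia recursion
\[
|A_h|\ \le\ \Big(\tfrac{2C_S^2(k\|c\|_q+\|h\|_q)}{h-k}\Big)^{2^*}|A_k|^{\delta},\qquad \delta=\tau\cdot2^*=\tfrac{2N(q-1)}{q(N-2)}-1,
\]
and the crucial point is that $\delta>1$ holds precisely because $q>N/2$. I would then run the classical Stampacchia iteration lemma restricted to levels in $[k_1,K]$ (so that the factor $k\|c\|_q+\|h\|_q$ is bounded by $K\|c\|_q+\|h\|_q$): it yields $|A_{k_1+d}|=0$ with $d=\theta(K\|c\|_q+\|h\|_q)$, $\theta=C|A_{k_1}|^{(\delta-1)/2^*}$; enlarging $k_1$ if needed so that $\theta\|c\|_q\le\tfrac12$, the choice $K:=2(k_1+\theta\|h\|_q)$ satisfies $k_1+d\le K$, so the restriction to $[k_1,K]$ is consistent. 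Hence $v\le K$ a.e., i.e.\ $\sup_\Omega u^+\le K\le C(N,q,|\Omega|,\|c\|_q)$, and rescaling concludes.

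The step I expect to be the main obstacle is this last bookkeeping: the coefficient in the recursion grows linearly in the truncation level, so the iteration must be confined to a finite level-range and $k_1$ has to be chosen (using that $|A_{k_1}|$ is small thanks to $\|v\|_2\le1$) large enough to simultaneously absorb the $c^+((v-k)^+)^2$ term and enforce the self-consistency $k_1+d\le K$; the rest is the routine De Giorgi--Stampacchia truncation scheme. (For $N=2$ one replaces $2^*$ everywhere by any fixed large exponent for which the stated inclusions hold, the hypothesis being $q>1$.)
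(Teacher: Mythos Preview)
The paper does not actually prove this lemma; its entire proof reads ``see \cite[Lemma 5]{JeSi}''. So there is no approach in the paper to compare against, and what you have written is a genuine, self-contained argument where the paper simply cites an external source.

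Your proof is the standard De~Giorgi--Stampacchia level-set truncation scheme and is correct. A few small remarks:
\begin{itemize}
\item There is an unfortunate notational clash: you use $h$ both for the data function in $L^q$ and for the upper truncation level in the recursion $|A_h|\le(\cdots)|A_k|^\delta$. This is harmless mathematically but should be cleaned up (write the level as $\ell$ or $k'$).
\item The delicate point --- that the Stampacchia coefficient $k\|c\|_q+\|h\|_q$ grows with the level --- is handled correctly: you restrict to $k\in[k_1,K]$, bound the coefficient by $K\|c\|_q+\|h\|_q$, and then choose $k_1$ large (using only $\|u^+\|_2\le1$, hence $|A_{k_1}|\le|\Omega|^{1/2}/k_1$) to make $\theta\|c\|_q\le\tfrac12$, so that $K=2(k_1+\theta\|h\|_q)$ satisfies $k_1+d\le K$. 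Since the Stampacchia iteration only uses levels in $[k_1,k_1+d]\subset[k_1,K]$, the restriction is consistent. This bookkeeping is exactly the point you flagged and it goes through.
\item The exponent computations are right: $2\sigma=\tfrac{2q-N}{qN}$, $\tau=\tfrac1{q'}-\tfrac1{2^*}$, and $\delta=\tau\cdot 2^*=\tfrac{Nq+2q-2N}{q(N-2)}>1$ precisely when $q>\tfrac{N}{2}$.
\end{itemize}
The alternative route one often sees (and likely what is in \cite{JeSi}) is Moser iteration rather than Stampacchia truncation; both give the same conclusion with the same dependence of the constant, so your choice is equally valid.
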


\begin{proof} see \cite[Lemma 5]{JeSi}.
\end{proof}

We also need the following formulation of the anti-maximum principle. Under slightly more smooth data this result was established in \cite{PeHe} but the proof given in \cite{PeHe} directly extend under our regularity assumptions. 

\begin{prop}
\label{anti}
Let $\bar{c}, \bar{h}, \bar{d} \in L^p(\Omega)$ with $p >N$ and assume that $\bar{h} \gneqq 0$. We denote by $\bar{\nu}_1 >0$ the first eigenvalue of
\begin{equation}
\label{cl}
-\Delta u + \bar{d}(x) u = \bar{\nu}_1 \bar{c}(x) u, \quad u \in H^1_0(\Omega).
\end{equation}
Then there exists $\varepsilon_0>0$ such that, for all $\lambda\in\,]\bar{\nu}_1,\bar{\nu}_1+\varepsilon_0]$, the solution $w$ of
\begin{equation}
\label{cll}
-\Delta w+ \bar{d}(x)w = \lambda \bar{c}(x) w  + \bar{h},\quad u \in H^1_0(\Omega).
\end{equation}
satisfies $w\ll0$.
\end{prop}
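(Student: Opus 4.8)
The plan is to reduce \eqref{cll} to a perturbation of the eigenvalue problem \eqref{cl} and then argue by contradiction via a spectral decomposition, exactly in the spirit of the classical Clément–Peletier anti-maximum principle, checking that the weaker $L^p$-regularity hypotheses still give the necessary compactness. First I would fix notation: let $\varphi$ be the positive first eigenfunction of \eqref{cl} associated with $\bar\nu_1$, normalized so that $\int_\Omega \bar c\,\varphi^2 = 1$; by the assumption $\bar c \gneqq 0$ and the regularity $\bar c,\bar d\in L^p(\Omega)$ with $p>N$, standard elliptic regularity (as in Theorem \ref{regularity}'s bootstrap, or De Giorgi–Nash–Moser) gives $\varphi\in W^{2,p}_0(\Omega)\subset C^1_0(\overline\Omega)$ with $\varphi>0$ in $\Omega$ and $\partial\varphi/\partial\nu<0$ on $\partial\Omega$; moreover $\bar\nu_1$ is simple and isolated in the spectrum of the (compact, self-adjoint after the usual symmetrization) resolvent operator $u\mapsto(-\Delta+\bar d)^{-1}(\bar c\,u)$.

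For $\lambda$ close to $\bar\nu_1$, write the solution $w=w_\lambda$ of \eqref{cll} as $w_\lambda = t_\lambda\varphi + z_\lambda$ with $\int_\Omega \bar c\,\varphi z_\lambda=0$. Testing \eqref{cll} against $\varphi$ and using that $\varphi$ solves \eqref{cl} gives
\[
(\bar\nu_1-\lambda)\int_\Omega \bar c\,w_\lambda\varphi = \int_\Omega \bar h\,\varphi,
\]
hence $t_\lambda(\bar\nu_1-\lambda) = \int_\Omega \bar h\,\varphi$. Since $\bar h\gneqq 0$ and $\varphi>0$, the right-hand side is a strictly positive constant $\kappa>0$, so $t_\lambda = \kappa/(\bar\nu_1-\lambda)\to -\infty$ as $\lambda\to\bar\nu_1^+$. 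On the complementary subspace the operator $-\Delta+\bar d-\lambda\bar c$ is invertible with norm bounded uniformly for $\lambda$ near $\bar\nu_1$, and the source term $\bar h$ is fixed in $L^p(\Omega)$; therefore $z_\lambda$ stays bounded in $W^{2,p}_0(\Omega)$, and in fact $z_\lambda\to z_0$ in $C^1_0(\overline\Omega)$ where $z_0$ is the unique solution of $(-\Delta+\bar d-\bar\nu_1\bar c)z_0 = \bar h - \kappa\,\bar c\,\varphi$ in that subspace (here I use $W^{2,p}\hookrightarrow C^1$ compactly, $p>N$). Consequently
\[
\frac{w_\lambda}{t_\lambda} = \varphi + \frac{z_\lambda}{t_\lambda} \longrightarrow \varphi \quad\text{in } C^1_0(\overline\Omega)\ \text{as } \lambda\to\bar\nu_1^+,
\]
and since $\varphi\gg 0$ (that is, $\varphi>0$ in $\Omega$ and $\partial\varphi/\partial\nu<0$ on $\partial\Omega$), for $\lambda$ in some interval $]\bar\nu_1,\bar\nu_1+\varepsilon_0]$ we get $w_\lambda/t_\lambda \gg 0$; as $t_\lambda<0$ there, this yields $w_\lambda\ll 0$, which is the claim.

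The main obstacle is the uniform-in-$\lambda$ control of $z_\lambda$ together with the $C^1$ (not merely $H^1_0$) convergence, since the conclusion $w\ll 0$ is a boundary-behaviour statement requiring Hopf-type information. This is handled by the regularity package already in place: any $H^1_0$ solution of \eqref{cll} with $L^p$ data, $p>N$, lies in $W^{2,p}_0(\Omega)\subset C^1_0(\overline\Omega)$ with norm controlled by $\|\bar h\|_p$ and the (uniformly bounded) inverse on the complement of $\mathrm{span}\,\varphi$; the compact embedding then upgrades weak $W^{2,p}$ convergence to $C^1$ convergence. One should also verify that $\bar\nu_1$ is genuinely simple and isolated under only $\bar c,\bar d\in L^p$, $\bar c\gneqq 0$ — this follows from Krein–Rutman applied to the positive compact resolvent, exactly as invoked for \eqref{eigenvaluep}. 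The rest is the bookkeeping above, and it reproduces \cite{PeHe} verbatim once one notes that every step there used only boundedness of coefficients in a space guaranteeing $W^{2,p}$-regularity, which $L^p(\Omega)$, $p>N$, provides.
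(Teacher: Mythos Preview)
The paper does not actually prove this proposition: it merely states that the result was established in \cite{PeHe} under smoother data and that ``the proof given in \cite{PeHe} directly extend under our regularity assumptions.'' Your plan is precisely the classical Hess/Cl\'ement--Peletier argument from \cite{PeHe}, carried out with the regularity checks ($L^p$ coefficients with $p>N$ yield $W^{2,p}_0\hookrightarrow C^1_0$ solutions, Krein--Rutman gives simplicity and isolation of $\bar\nu_1$) that justify the extension the paper asserts --- so your approach is exactly what the paper has in mind, only spelled out.
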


\section{A priori bound}
\label{Section2}

This section is devoted to the derivation of some a priori bounds results for the solutions of $(P_{\lambda})$. Most of our results hold under more general assumptions than (A). \medskip

First, using ideas of \cite{AbPePr}, we obtain the following lower bound on the upper solutions of $(P_{\lambda})$.  

\begin{lem}
\label{lem 4.1}
Under conditions $(A)$, for any $\Lambda_2>0$, there exists a constant $M:=M(\Lambda_2, \mu_1, \|c\|_{N/2}, \|h^-\|_{N/2})>0$ such that, 
for any $\lambda \in [0, \Lambda_2]$,  any function  $u\in H^1_0(\Omega) \cap L^{\infty}(\Omega)$ verifying  $u\geq 0$ on $\partial \Omega$ and such that, for all 
$v \in H^1_0(\Omega) \cap L^{\infty}(\Omega)$ with $v\geq 0$ a.e. in $\Omega$,
\begin{equation}
\label{EE1bis}
\int_{\Omega} \nabla u \nabla v \, dx 
\geq  
\int_{\Omega} [\lambda c(x)u + \mu(x) |\nabla u|^2 + h(x)] v\, dx
\end{equation}
satisfies
$$
\min_{\Omega} u >-M.
$$
\end{lem}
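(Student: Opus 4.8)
The plan is to work with the change of variables that linearizes the quadratic-gradient term, and then invoke the linear estimate of Lemma~\ref{Sirakov} on the transformed function. Since we only have a one-sided (supersolution) inequality \eqref{EE1bis}, and we want a \emph{lower} bound on $u$, it is natural to localize on the set where $u$ is negative. Concretely, I would set $v_0(s) = \frac{1}{\mu_1}\bigl(1 - e^{-\mu_1 s}\bigr)$ for $s \le 0$ (so $v_0$ is increasing, concave, $v_0(0)=0$, $v_0' = e^{-\mu_1 s} \ge 1$ on $s\le 0$, and $v_0'' = -\mu_1 e^{-\mu_1 s} = -\mu_1 v_0'$), and consider $w := -v_0(-u^-) = v_0(\min\{u,0\})$, which is a nonpositive $H^1_0\cap L^\infty$ function supported on $\{u<0\}$; note $w = 0$ where $u \ge 0$, and on $\partial\Omega$ we have $u \ge 0$ so $u^- = 0$ there, consistent with $w \in H^1_0$.

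The key computation is to plug a suitable test function into \eqref{EE1bis} to derive a linear differential inequality for $w$. Formally, on $\{u<0\}$ one has $-\Delta u \ge \lambda c u + \mu(x)|\nabla u|^2 + h \ge \lambda c u - |h^-| + \mu_1 |\nabla u|^2$ (using $\lambda c u \le \lambda c u$ trivially and $h \ge -h^-$; here we also use $u<0$ only to handle the sign of $\lambda c u$ if needed, but since $\lambda, c \ge 0$ and $u<0$, $\lambda c u \le 0$, so actually $-\Delta u \ge -\lambda c u^- - h^- + \mu_1|\nabla u|^2$). Applying the chain rule to $w = v_0(u)$ on this set gives $-\Delta w = v_0'(u)(-\Delta u) - v_0''(u)|\nabla u|^2 \ge v_0'(u)\bigl(-\lambda c u^- - h^- + \mu_1|\nabla u|^2\bigr) + \mu_1 v_0'(u)|\nabla u|^2$; the two gradient terms have the wrong... — rather, choosing the sign of $v_0$ so that $w\le 0$ and $v_0''=-\mu_1 v_0'$ makes the $|\nabla u|^2$ contributions \emph{cancel}, leaving $-\Delta w \ge -v_0'(u)\lambda c u^- - v_0'(u) h^-$. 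Since $0 \le -w = v_0(-u^-)\cdot(-1)\cdots$ is comparable to $u^-$ up to the factor $v_0'$, and $v_0'(u) = e^{\mu_1 u^-}$ can be large, I would instead bound things in terms of $u^-$ directly on the set $\{u \ge -M_0\}$ for the eventual contradiction argument, or — cleaner — observe that $-w$ and $u^-$ satisfy $\frac{1}{\mu_1}(1-e^{-\mu_1 u^-}) = -w \le u^-$ and that a lower bound on $w$ (equivalently an upper bound on $-w < \frac{1}{\mu_1}$) automatically gives $u^- \le \frac{1}{\mu_1}\log\frac{1}{1-\mu_1(-w)}$, so it suffices to bound $-w$ away from $\frac1{\mu_1}$, i.e. to bound $\sup_\Omega(-w)$. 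The differential inequality $-\Delta(-w) \le \lambda c\, u^- v_0'(u) + h^- v_0'(u)$ combined with $v_0'(u) u^- \le$ (const)$\cdot(-w)\cdot$(something bounded) is not quite linear in $-w$; the honest route is to note $v_0'(u) u^- = e^{\mu_1 u^-} u^-$, and since $1-e^{-\mu_1 u^-} = \mu_1(-w)$ we get $e^{\mu_1 u^-} = \frac{1}{1-\mu_1(-w)}$, so the right-hand side is $\frac{\lambda c\, u^- + h^- }{1-\mu_1(-w)}$ which blows up exactly as $-w \to \frac1{\mu_1}$. This suggests running the argument by contradiction on $M$.

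Thus the cleanest structure is: suppose the conclusion fails, so there is a sequence $\lambda_n \in [0,\Lambda_2]$ and $u_n$ as in the statement with $\min_\Omega u_n = -m_n \to -\infty$ (equivalently $\|u_n^-\|_\infty \to \infty$). Set $w_n = v_0(\min\{u_n,0\})$ as above; then $-w_n \uparrow \frac1{\mu_1}$ somewhere but $w_n$ still satisfies, weakly on $\{u_n<0\}$ and hence on $\Omega$ (since $w_n\equiv 0$ on $\{u_n\ge0\}$ and $w_n\in H^1_0$, one checks the inequality propagates with the correct sign of the test function — this is where I'd be careful), the linear inequality $-\Delta(-w_n) \le \lambda_n c(x)(-w_n) \cdot(\text{bounded factor}) + \tilde h_n$ with $\tilde h_n$ controlled by $\|h^-\|_{N/2}$; more precisely after the substitution the equation for $z_n := -\mu_1 w_n = e^{-\mu_1 u_n^-}-1 \in [0,1)$ should read, using $-\Delta u_n^- \ge -\lambda_n c u_n^- - h^-$ in the distributional sense on $\{u_n<0\}$ and the identity for the Cole–Hopf type transform, as $-\Delta z_n \le \mu_1(1-... )$ — and crucially $z_n \le 1$, $\|z_n\|_2 \le |\Omega|^{1/2}$ is \emph{uniformly bounded}, while $z_n$ is a subsolution of $-\Delta z_n \le c(x)\,(\text{const})\,z_n + C\|h^-\|_{N/2}$-type inequality. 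Then Lemma~\ref{Sirakov} (applied with $q = N/2$) yields $\sup_\Omega z_n \le C(\|z_n\|_2 + \|h^-\|_{N/2}) \le C'$ with $C'$ independent of $n$, i.e. $\sup_\Omega z_n$ is bounded by a constant $<1$ for all large... — but $z_n \to 1$ at the point where $u_n^- \to \infty$, a contradiction once we know $C' < 1$; that strict inequality is exactly what must be arranged, which is possible because the transform can be rescaled (replace $\mu_1$ by a smaller positive number if necessary, or multiply $v_0$ by a small constant) so that the a priori sup bound coming from Lemma~\ref{Sirakov} is $<1$ while still keeping $z_n$ bounded below, and then let $n\to\infty$.

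The main obstacle, I expect, is the rigorous justification that the transformed function $w_n$ (or $z_n$) satisfies the claimed linear differential inequality \emph{weakly on all of $\Omega$} despite $u_n$ being only $H^1_0\cap L^\infty$ and the transform being only Lipschitz (not $C^1$ up to the kink at $u_n = 0$): one must use $v \mapsto \varphi\, v_0'(u_n)$-type test functions with $\varphi \ge 0$ in \eqref{EE1bis}, carefully handle the set $\{u_n = 0\}$ (where $\nabla u_n = 0$ a.e. by Stampacchia), and check the sign conditions so that the quadratic gradient term cancels rather than adds. Everything else — the elementary ODE facts about $v_0$ and $m$, the choice of the rescaling constant making the Lemma~\ref{Sirakov} bound strictly less than the barrier value, and the extraction of the contradictory sequence — is routine. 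I would also double-check that $\|c\|_{N/2}$ and $\|h^-\|_{N/2}$ (finite since $p>N>N/2$) are the only norms of the data entering the constant from Lemma~\ref{Sirakov}, matching the claimed dependence $M = M(\Lambda_2,\mu_1,\|c\|_{N/2},\|h^-\|_{N/2})$.
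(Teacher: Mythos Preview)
Your Cole--Hopf approach is a natural idea, but the execution has a fatal error. With your choice $v_0(s)=\frac{1}{\mu_1}(1-e^{-\mu_1 s})$ on $\{s\le 0\}$ one computes $w_n=\frac{1}{\mu_1}(1-e^{\mu_1 u_n^-})$, hence $z_n:=-\mu_1 w_n=e^{\mu_1 u_n^-}-1$, which lies in $[0,\infty)$ and blows up exactly when $u_n^-$ does. Your assertion that $z_n\in[0,1)$ and therefore $\|z_n\|_2\le|\Omega|^{1/2}$ uniformly is simply wrong, so the intended application of Lemma~\ref{Sirakov} collapses: there is no uniform $L^2$ control to feed into it. (Incidentally, with your $v_0$ the gradient terms do not cancel but double, since $v_0''=-\mu_1 v_0'$ gives $\mu_1 v_0'-v_0''=2\mu_1 v_0'>0$; this is harmless because the sign is favorable and you may drop them, but it is not ``cancellation''.) If you switch to the transform $v_0(s)=\frac{1}{\mu_1}(e^{\mu_1 s}-1)$, then $-w\in[0,1/\mu_1)$ and the right-hand side becomes bounded thanks to $u^- e^{-\mu_1 u^-}\le(e\mu_1)^{-1}$; but then you must show $\sup(-w)$ stays a fixed positive distance below $1/\mu_1$, and your ``rescale $\mu_1$'' suggestion is not worked out and does not obviously close.

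The paper's proof is entirely different and much more direct: it takes $v=u^-$ itself as test function in \eqref{EE1bis} and exploits the identity $\mu_1\int u^-|\nabla u^-|^2=\frac{4\mu_1}{9}\int|\nabla(u^-)^{3/2}|^2$. This creates the coercive quantity $\|\nabla(u^-)^{3/2}\|_2^2$ on the left; after splitting $\int c^+(u^-)^2=\int (c^+)^{1/2}(u^-)^{1/2}\cdot(c^+)^{1/2}(u^-)^{3/2}$ via Young's inequality and applying Sobolev to $\|(u^-)^{3/2}\|_{2^*}$, one absorbs the right-hand side and obtains $\|u^-\|_{H^1_0}\le C(\Lambda_2,\mu_1,\|c\|_{N/2},\|h^-\|_{N/2})$. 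Lemma~\ref{Sirakov} then converts this into the $L^\infty$ bound on $u^-$. No exponential change of variables, contradiction argument, or barrier value ever enters.
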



\begin{remark}
This result is valid under less regularity conditions than (A) and without sign conditions on $c$ and $h$.
More precisely, it holds under the conditions:
$\Omega \subset \R^N$, $N \geq 2$ is a bounded domain with $\partial \Omega$
of class $C^{1,1}$,
$c$ and  $h$  belong to $L^p(\Omega)$ for some $p > N/2$,
$\mu \in L^{\infty}(\Omega)$ satisfies $0<\mu_1\leq \mu(x)\leq \mu_2$.
 
Moreover, the lower bound does not depend on $h^+$ and depends only on an upper bound on $\lambda \geq 0$. 
\end{remark}

\begin{proof}
Let us take $v=u^-$ as test function in  \eqref{EE1bis}. We obtain
$$
\begin{array}{rcl}
\displaystyle 
-\int_{\Omega}|\nabla u^-|^2 \, dx
&\geq& 
\displaystyle
- \lambda \int_{\Omega} c^+ (u^-)^2 \, dx + \mu_1 \int_{\Omega} |\nabla u^-|^2u^- \, dx- \int_{\Omega} h^- u^-\, dx
\\[3mm]
& \geq & 
\displaystyle
- \Lambda_2 \int_{\Omega} c^+ (u^-)^2 \, dx+ \mu_1\frac49 \int_{\Omega} |\nabla (u^-)^{3/2}|^2 \, dx - \int_{\Omega} h^- u^-\, dx
\end{array}
$$
and hence
$$
 \mu_1\frac49 \int_{\Omega} |\nabla (u^-)^{3/2}|^2 \, dx + \int_{\Omega}|\nabla u^-|^2\, dx 
\leq
\int_{\Omega} h^- u^- \, dx + \Lambda_2 \int_{\Omega} c^+ (u^-)^2 \, dx.
$$
For every $\varepsilon>0$ we have
$$
\begin{array}{rcl}
\displaystyle
\Lambda_2 \int_{\Omega} c^+ (u^-)^2 \, dx
&=&
 \displaystyle
\int_{\Omega} (\Lambda_2 c^+)^{1/2} (u^-)^{1/2} (\Lambda_2 c^+)^{1/2} (u^-)^{3/2} \, dx
\\[3mm]
&\leq& 
\displaystyle
\frac{1}{2\varepsilon} \Lambda_2 \int_{\Omega} c^+ u^- \, dx+ \frac{\varepsilon}{2} \Lambda_2 \int_{\Omega} c^+ \left((u^-)^{3/2}\right)^2 \, dx.
\end{array}
$$
Also, for some constant $C_N$, by Sobolev's embedding, we get
$$
\int_{\Omega} c^+ \left((u^-)^{3/2}\right)^2 \, dx
\leq 
\|c^+\|_{N/2} \|(u^-)^{3/2}\|_{2^*}^2 
\leq 
\frac{1}{C_N} \|c^+\|_{N/2} \|\nabla (u^-)^{3/2}\|_{2}^2.
$$
We then obtain
$$
\begin{array}{l}
\displaystyle
 \mu_1\frac49 \int_{\Omega} |\nabla (u^-)^{3/2}|^2 \, dx
+
\displaystyle\int_{\Omega}|\nabla u^-|^2 \, dx
\\[3mm]
\mbox{}\hspace{20mm}
\leq
\displaystyle
\int_{\Omega} h^- u^- \, dx+ \frac{1}{2\varepsilon} \Lambda_2 \int_{\Omega} c^+ u^- \, dx
+ \frac{\varepsilon}{2} \frac{\Lambda_2}{C_N} \|c^+\|_{N/2} \|\nabla (u^-)^{3/2}\|_{2}^2.
\end{array}
$$
Hence, by choosing $\displaystyle \varepsilon=\frac{C_N}{\Lambda_2  \|c^+\|_{N/2}} \mu_1 \frac49$, it comes
\arraycolsep1.5pt
$$
\begin{array}{rcl}
\displaystyle
 \mu_1\frac29 \int_{\Omega} |\nabla (u^-)^{3/2}|^2 \, dx
&+& 
\displaystyle
\int_{\Omega}|\nabla u^-|^2 \, dx
\\
&\leq&
\displaystyle
\int_{\Omega} h^- u^- \, dx+ \frac{9 \Lambda_2^2  \|c^+\|_{N/2}}{8 \mu_1 C_N} \int_{\Omega} c^+ u^-\, dx 
\\[3mm]
&\leq&
\displaystyle
 C \! \left(\|h^-\|_{N/2} \|\nabla u^-\|_2+\frac{\Lambda_2^2}{\mu_1} \|c^+\|_{N/2}^2  \|\nabla u^-\|_2\right)
\end{array}
$$
\arraycolsep5pt
from which we deduce that
$$
\|u^-\|_{H^1_0} \leq C (\|h^-\|_{N/2} +\frac{\Lambda_2^2}{\mu_1}\|c^+\|_{N/2}^2).
$$
By Lemma \ref{Sirakov} we obtain that
$$
u\geq -M:=-M(\Lambda_2, \mu_1, \|c\|_{N/2}, \|h^-\|_{N/2})
$$
which allows to conclude.
\end{proof}

As a simple corollary we have the following result.

\begin{cor}
\label{cor 4.1b}
Under conditions $(A)$, for any $\Lambda_2>0$, there exists a constant $M:=M(\Lambda_2, \mu_1, \|c\|_{N/2}, \|h^-\|_{N/2})>0$ such that, 
for any $\lambda \in [0, \Lambda_2]$,  any upper solution $\beta$ of $(P_{\lambda})$
satisfies
$$
\min_{\Omega} \beta >-M.
$$
\end{cor}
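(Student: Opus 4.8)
The plan is to deduce this directly from Lemma \ref{lem 4.1} once the definition of upper solution is unwound. By definition an upper solution $\beta$ of $(P_{\lambda})$ has the form $\beta = \min\{\beta_j \mid 1 \le j \le l\}$, where each $\beta_j \in W^{2,p}(\Omega)$ is a regular upper solution, i.e. $-\Delta\beta_j \ge \lambda c(x)\beta_j + \mu(x)|\nabla\beta_j|^2 + h(x)$ for a.e. $x \in \Omega$ and $\beta_j \ge 0$ on $\partial\Omega$. So the first step is to check that each $\beta_j$ falls under the hypotheses of Lemma \ref{lem 4.1}: since $p > N$ we have $\beta_j \in W^{2,p}(\Omega) \subset C^1(\overline\Omega) \cap L^{\infty}(\Omega)$ and $\beta_j \ge 0$ on $\partial\Omega$, and multiplying the pointwise inequality by an arbitrary $v \in H^1_0(\Omega) \cap L^{\infty}(\Omega)$ with $v \ge 0$ and integrating by parts --- valid because $\beta_j \in W^{2,p}(\Omega)$ and $v \in H^1_0(\Omega)$ --- produces exactly inequality \eqref{EE1bis} with $u$ replaced by $\beta_j$. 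Although Lemma \ref{lem 4.1} is phrased for $u \in H^1_0(\Omega)$, its proof only ever uses the test function $u^-$, which here is $\beta_j^- \in H^1_0(\Omega)$ since $\beta_j \ge 0$ on $\partial\Omega$; so the lemma, and in particular the uniform constant it provides, applies verbatim to $\beta_j$.

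Applying Lemma \ref{lem 4.1} with this same $\Lambda_2$ then yields a constant $M = M(\Lambda_2, \mu_1, \|c\|_{N/2}, \|h^-\|_{N/2}) > 0$, independent of $j$, of $\lambda \in [0,\Lambda_2]$ and of the particular $\beta$, such that $\min_{\Omega} \beta_j > -M$ for every $j \in \{1,\dots,l\}$. The last step passes from the $\beta_j$'s to $\beta$: for each $x \in \overline\Omega$ there is an index $j(x)$ with $\beta(x) = \beta_{j(x)}(x) \ge \min_{\Omega} \beta_{j(x)} > -M$, and since there are only finitely many indices we conclude $\min_{\Omega} \beta \ge \min_{1 \le j \le l}\bigl(\min_{\Omega} \beta_j\bigr) > -M$, as claimed.

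There is no real obstacle here --- this is precisely why the statement is labelled a corollary. The only two points deserving a word of care are the translation of the pointwise definition of a regular upper solution into the weak inequality \eqref{EE1bis} (together with the remark on the test-function class just mentioned), and the observation that the constant delivered by Lemma \ref{lem 4.1} depends only on $\Lambda_2$, $\mu_1$, $\|c\|_{N/2}$ and $\|h^-\|_{N/2}$, hence is uniform over all $\lambda \in [0,\Lambda_2]$ and all upper solutions $\beta$ of $(P_{\lambda})$.
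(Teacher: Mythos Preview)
Your proposal is correct and follows essentially the same route as the paper's proof: decompose $\beta$ into its regular pieces $\beta_j$, check that each $\beta_j$ satisfies the weak inequality \eqref{EE1bis}, invoke Lemma~\ref{lem 4.1}, and then pass to the minimum. Your write-up is in fact more careful than the paper's, which simply asserts that the $\beta_j$ ``belong to $H^1(\Omega)\cap L^\infty(\Omega)$ and satisfy \eqref{EE1bis}''; you explicitly address the point that $\beta_j$ need not lie in $H^1_0(\Omega)$ but that the proof of Lemma~\ref{lem 4.1} only uses $\beta_j^-\in H^1_0(\Omega)$, which is guaranteed by $\beta_j\ge 0$ on $\partial\Omega$.
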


\begin{proof}
As $\beta=\min\{\beta_j\mid 1\leq j \leq l\}$ where $\beta_j$ are regular  upper solutions, they belong to  $H^1(\Omega) \cap L^{\infty}(\Omega) $ and 
satisfy \eqref{EE1bis}. We conclude by Lemma \ref{lem 4.1}.
\end{proof}

Let $\tilde\nu_1 >0$ denotes the first eigenvalue of
\begin{equation}
\label{eigenvaluep2}
-\Delta u + \mu_1 h^-(x) u= \nu c (x) u,  \quad u \in H^1_0(\Omega),
\end{equation}
with corresponding eigenfunction $\psi_1 >0$. 
\medbreak

\begin{thm}
\label{bounds1}
Under condition $(A)$, for any $\Lambda_2>\Lambda_1>0$, any $A>0$, there exists a constant $M>0$ such that,
 for any $\lambda\in [\Lambda_1,\Lambda_2]$, $a\in [0,A]$, any solution $u$ of
 \begin{equation}
\label{EE1a}
- \Delta u = \lambda c(x)u+ \mu(x) |\nabla u|^2 + h(x)+ac(x), \quad u \in H^1_0(\Omega) \cap L^{\infty}(\Omega),
\end{equation}
 satisfies
$$
\|u\|_{\infty}<M.
$$
Moreover, viewed as a function of $\Lambda_1$,  $M = O_{0^+}(1/\Lambda_1)$. 
\end{thm}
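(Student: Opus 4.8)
The plan is to obtain an a priori $L^\infty$ bound on solutions of \eqref{EE1a} by a blow-up (rescaling) argument combined with the Liouville-type nonexistence result that $\mu \geq \mu_1 > 0$ forces. First I would reduce to an $H^1_0$ estimate: writing $g = h + ac$, note that $\|g\|_p$ is bounded uniformly in $a \in [0,A]$, so it suffices to bound $\|u\|_\infty$ (and the lower bound $u \geq -M$ with $M = O(1/\Lambda_1)$ already follows from Lemma \ref{lem 4.1} applied with $\Lambda_2$, since $ac \geq 0$ adds a nonnegative term to the right side and only helps the sign). The real work is the upper bound on $u^+$. The standard device here (as in \cite{AbPePr,ArDeJeTa}) is to test the equation against $v = (u^-)$-type or against $e^{\mu_1 u}$-type functions to absorb the quadratic gradient term; more precisely, the substitution $w = \frac{1}{\mu_1}(e^{\mu_1 u^+} - 1)$ or a Gossez–type truncation converts the supersolution inequality for $u^+$ into a linear inequality $-\Delta w \leq C(\lambda c w + c + h^+ )(1 + \mu_1 w)$ on the set $\{u>0\}$, after which Lemma \ref{Sirakov} gives $\|w^+\|_\infty \le C(\|w^+\|_2 + \|c\|_p + \|h^+\|_p)$, hence an $L^\infty$ bound once we control $\|w\|_2$, i.e. an $H^1_0 \cap L^2$ bound.

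The key step, then, is the $H^1_0$ bound, and this is where $\mu \ge \mu_1$ enters decisively. I would test the equation with $v = u$ (or a truncation) and use $\int \mu |\nabla u|^2 u \, dx$ together with $\int \lambda c u^2 \,dx$; the term $\mu_1 \int |\nabla u^+|^2 u^+ \ge \mu_1 \frac49 \int |\nabla (u^+)^{3/2}|^2$ as in the proof of Lemma \ref{lem 4.1}. The crucial point giving the $O_{0^+}(1/\Lambda_1)$ dependence: rescaling $u \mapsto \Lambda_1 u$ (or tracking constants explicitly), the quadratic gradient term scales like $\Lambda_1^{-1}$ relative to the linear term $\lambda c u$, and the eigenvalue \eqref{eigenvaluep2} with its eigenfunction $\psi_1 > 0$ provides the mechanism: testing against $\psi_1$ and using $-\Delta u - \lambda c u \geq \mu_1 |\nabla u|^2 - h^- $ shows that $\int \mu_1 |\nabla u|^2 \psi_1 \,dx$ is controlled, so the solution cannot be both large and have large gradient energy; the larger $\lambda \geq \Lambda_1$ is, the more the spectral gap forces $u$ to stay bounded, giving $M = O(1/\Lambda_1)$ as $\Lambda_1 \to 0^+$ only — wait, the bound degenerates as $\Lambda_1 \to 0^+$, consistent with the blow-up in Theorem \ref{thmlocal}, so the dependence is genuinely $M \lesssim 1/\Lambda_1$ for $\Lambda_1$ small.

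If the direct energy estimate does not cleanly yield the quantitative $1/\Lambda_1$ rate, the fallback is the blow-up argument: suppose $\lambda_n \in [\Lambda_1,\Lambda_2]$, $a_n \in [0,A]$, $u_n$ solutions with $\|u_n\|_\infty \to \infty$; rescale around a near-maximum point to get, in the limit, an entire solution of $-\Delta U = \mu_\infty |\nabla U|^2$ on $\R^N$ or a half-space with $\mu_\infty \ge \mu_1$ and appropriate sign/boundedness from Lemma \ref{lem 4.1} (which gives a uniform two-sided control on $u_n^-$ but not $u_n^+$), and the substitution $V = e^{\mu_\infty U}$ makes $V$ a bounded positive harmonic/superharmonic function, forcing $U$ constant — contradiction. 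The main obstacle I anticipate is making the rescaling compatible with the linear term $\lambda c u$ and the inhomogeneity $h + ac$ (these do not scale nicely when $N \ge 3$ and $c,h \in L^p$ only), so one must choose the rescaling so that the Lebesgue norms of the rescaled data vanish in the limit — this is exactly why the hypothesis $p > N$ (rather than $p > N/2$) is used — and to carry the quantitative rate through this argument one tracks how the first rescaling length depends on $\|u_n\|_\infty$ and on $\lambda_n \ge \Lambda_1$. I expect the bookkeeping of constants for the $O_{0^+}(1/\Lambda_1)$ claim, rather than the qualitative bound, to be the delicate part.
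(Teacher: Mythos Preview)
Your plan has the right general architecture (exponential change of unknown, then Lemma~\ref{Sirakov} to pass from $L^2$ to $L^\infty$), but the crucial closing step is missing and one of your proposed arguments does not work.

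First, testing the original equation with $u^+$ \emph{cannot} bound $u^+$: the term $\int \mu |\nabla u^+|^2 u^+$ sits on the wrong side of the inequality when $u>0$ (compare with the proof of Lemma~\ref{lem 4.1}, where the analogous term for $u^-$ has the favourable sign). So the ``$\tfrac49\|\nabla (u^+)^{3/2}\|_2^2$'' trick is not available for the upper bound.

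Second, after the substitution $w_i=\tfrac{1}{\mu_i}(e^{\mu_i u}-1)$ the resulting problem is not linear but superlinear: one gets
\[
-\Delta w_1 \ge (1+\mu_1 w_1)[\lambda c\, g_1(w_1)+h+ac],\qquad
-\Delta w_2 \le (1+\mu_2 w_2)[\lambda c\, g_2(w_2)+h+ac],
\]
with $g_i(s)=\tfrac{1}{\mu_i}\ln(1+\mu_i s)$. Testing the second inequality with $w_2^+$ yields $\|\nabla w_2^+\|_2^2$ bounded by an integral of $(1+\mu_2 w_2^+)^2\ln(1+\mu_2 w_2^+)$ against $c$, which is \emph{superlinear} in $w_2^+$ and does not close by itself. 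The paper closes it in two moves you do not have: (i) test the $w_1$-inequality against the eigenfunction $\psi_1$ of \eqref{eigenvaluep2} and use $t\le \varepsilon(1+\mu_1 t)g_1(t)+C_\varepsilon$ with $\varepsilon\sim\Lambda_1$ to get a bound on $\int (1+\mu_2 w_2^+)^{1-\theta}[cg_2(w_2^+)+H]\psi_1$ (here the identity $1+\mu_1 w_1=(1+\mu_2 w_2)^{1-\theta}$, $\theta=(\mu_2-\mu_1)/\mu_2$, links the two transforms); (ii) interpolate this $\psi_1$-weighted integral with the unweighted one via H\"older and a Brezis--Turner type estimate (Lemmas~\ref{tec1}--\ref{tec2}), which controls $\|b^{1/q}w/\psi_1^\tau\|_q$ by $\|\nabla w\|_2$ and yields $\|\nabla w_2^+\|_2^2\le D(\Lambda_1)^\alpha\|\nabla w_2^+\|_2^{q(1-\alpha)}$ with $q(1-\alpha)<2$. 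This is the genuine mechanism, and it is where the quantitative rate comes from: $C_\varepsilon=O(\varepsilon e^{1/\varepsilon})$ gives $\|w_2^+\|_\infty=O(e^{\beta/\Lambda_1})$, and the logarithm $u^+=g_2(w_2^+)$ converts this to $O(1/\Lambda_1)$.

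Your blow-up fallback is a different route; it may give the qualitative bound, but (as you note) the $L^p$ coefficients make the rescaling delicate, and there is no evident way to extract the sharp $O(1/\Lambda_1)$ rate from it.
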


In the above theorem, the notation $M = O_{0^+}(g(\Lambda_1))$ means the existence of $C>0$ such that
$$
\left|\frac{M(\Lambda_1)}{g(\Lambda_1)}\right|\leq C, \qquad \mbox{ as }\Lambda_1\to 0^+.
$$

\begin{remark}
The above theorem is valid under less restrictive conditions. In fact it is valid if
we replace the regularity $c$ and $h\in L^p(\Omega)$ with $p>N$ by  $c$ and $h\in L^p(\Omega)$ with $p>N/2$ and $h^- \in L^q(\Omega)$ for some $q >N$. This last condition is used to prove that the first eigenfunction $\psi_1 >0$ of \eqref{eigenvaluep2} satisfies $\psi_1 \geq d \delta(x)$ for some constant  $d>0$ where $\delta(x)$
denotes the distance from $x$ to $\partial \Omega$. This is needed  to insure that the conclusion of Lemma \ref{tec2} holds.
Following the proof of \cite[Lemma 6.3]{ArDeJeTa} it is  possible to prove that this condition on $\psi_1$ holds under this stronger regularity.
\end{remark}

In the proof of the Theorem \ref{bounds1} the following  technical lemmas will be used.

\begin{lem}
\label{tec1}
Let $p > \frac{N}{2}$ and $\theta \in\, ]0,1[$. There exist   $r \in\, ]0,1[$ and  $\alpha \in ]0, \frac{p-1}{2p-1}[$ such that if  we define
    \begin{equation}
		\label{a0}
    q =1+ r + \frac{1+ \theta \alpha}{1- \alpha}, \quad \tau = \frac{1}{q} \, \frac{\alpha}{1- \alpha}
    \end{equation}
then it holds
\begin{equation}
\label{boundsq}
\frac{1}{p} \leq q \leq \frac{2N(p-1)}{p(N-2 + 2 \tau)}
\end{equation}
and
\begin{equation}
\label{boundsalpha}
1 - \alpha < \frac{2}{q}.
\end{equation}
\end{lem}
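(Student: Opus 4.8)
The goal is to prove Lemma \ref{tec1}, a purely arithmetic statement asserting the existence of parameters $r \in \,]0,1[$ and $\alpha \in \,]0, \frac{p-1}{2p-1}[$ (with $q,\tau$ defined by \eqref{a0}) satisfying the two inequalities \eqref{boundsq} and \eqref{boundsalpha}. Since $p > N/2$ and $\theta \in \,]0,1[$ are fixed, the plan is to analyze the limiting behaviour of $q$ and $\tau$ as $\alpha \to 0^+$ and $r \to 0^+$, and then invoke continuity to conclude that the strict inequalities persist in a neighbourhood.

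First I would compute the relevant limits. As $\alpha \to 0^+$ and $r \to 0^+$, we have $q \to 1 + 0 + \frac{1+0}{1-0} = 2$ and $\tau = \frac{1}{q}\frac{\alpha}{1-\alpha} \to 0$. Plugging these limiting values into the three quantities appearing in \eqref{boundsq} and \eqref{boundsalpha}: the left bound $\frac{1}{p}$ is $\le 2$ since $p > N/2 \ge 1$ (indeed $p > N/2 \ge 3/2$ when $N\ge 3$, and the statement requires $p>N/2$ in any case so $\frac1p < 1 < 2$), so $\frac1p \le q$ holds with room to spare; the right bound $\frac{2N(p-1)}{p(N-2+2\tau)} \to \frac{2N(p-1)}{p(N-2)}$, and one checks that $\frac{2N(p-1)}{p(N-2)} > 2$ is equivalent to $N(p-1) > p(N-2)$, i.e. $Np - N > Np - 2p$, i.e. $2p > N$, which is exactly our hypothesis $p > N/2$; finally $1 - \alpha \to 1 < 2 = \frac{2}{q}$ in the limit, so \eqref{boundsalpha} also holds strictly in the limit. (The degenerate case $N = 2$, where $N-2 = 0$, must be noted separately: there the right-hand side of \eqref{boundsq} is $+\infty$ for $\tau = 0$ and stays large for $\tau$ small, so the bound is trivially satisfied.)

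Having verified that all the required inequalities hold strictly at the limit point, I would then argue by continuity: the maps $(\alpha, r) \mapsto q$ and $(\alpha,r)\mapsto \tau$ given by \eqref{a0} are continuous on a neighbourhood of $(0,0)$, and hence so are the three expressions $\frac1p - q$, $q - \frac{2N(p-1)}{p(N-2+2\tau)}$ (or its obvious modification when $N=2$), and $1 - \alpha - \frac2q$. Since each is, at the limit, either negative or $\le 0$ with strict slack where needed, I can choose $\alpha \in \,]0, \frac{p-1}{2p-1}[$ and $r \in \,]0,1[$ both sufficiently small that \eqref{boundsq} and \eqref{boundsalpha} hold. One small point to handle carefully: $\frac1p \le q$ is a non-strict inequality, but since $\frac1p < 2$ strictly and $q \to 2$, it holds for $\alpha, r$ small; and the constraint $\alpha < \frac{p-1}{2p-1}$ is automatically compatible with "$\alpha$ small" since $\frac{p-1}{2p-1} > 0$ whenever $p > 1$, which follows from $p > N/2 \ge 1$.

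I do not anticipate a genuine obstacle here — the lemma is a routine continuity-and-limits argument — but the one place demanding care is the bookkeeping that ties the strict inequality in \eqref{boundsq} back to the hypothesis $p > N/2$, and making sure the case $N = 2$ (allowed by some of the paper's remarks, where $N-2+2\tau$ could be small) is not mishandled; in that case one simply notes $\tau > 0$ keeps the denominator positive and the right-hand side of \eqref{boundsq} large. So the write-up would be: state the limits of $q$ and $\tau$, verify the three inequalities become strict/valid in the limit using $p > N/2$, and conclude by continuity that a valid choice of $(r,\alpha)$ with $\alpha$ small exists.
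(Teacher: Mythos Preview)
Your approach has a genuine arithmetic slip that breaks the argument for \eqref{boundsalpha}. You write ``$1 - \alpha \to 1 < 2 = \frac{2}{q}$ in the limit,'' but since you correctly computed $q \to 2$, one has $\frac{2}{q} \to 1$, not $2$. So in the limit the inequality \eqref{boundsalpha} degenerates to the \emph{equality} $1 = 1$, and a naive continuity argument does not yield the strict inequality you need. Concretely, expanding
\[
q(1-\alpha) = (1+r)(1-\alpha) + (1+\theta\alpha) = 2 - \alpha(1-\theta) + r(1-\alpha),
\]
one sees that $q(1-\alpha) < 2$ is equivalent to $r(1-\alpha) < \alpha(1-\theta)$, i.e.\ $r < \dfrac{\alpha(1-\theta)}{1-\alpha}$. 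Thus \eqref{boundsalpha} is a genuine competition between $r$ and $\alpha$: it fails if $r$ is not small \emph{relative to} $\alpha$, so sending both to $0$ simultaneously is too crude.

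The repair is straightforward and keeps the spirit of your argument: first fix $\alpha \in \,]0, \frac{p-1}{2p-1}[$ small enough that the right inequality in \eqref{boundsq} holds (your limit computation for that part is correct, since $p > N/2$ gives a strict inequality at $(\alpha,r)=(0,0)$ when $N\ge 3$, and the $N=2$ case reduces to $\frac{\alpha}{1-\alpha} \le \frac{2(p-1)}{p}$, which is fine for small $\alpha$); then, with $\alpha$ fixed, choose $r \in \,]0,1[$ satisfying $r < \frac{\alpha(1-\theta)}{1-\alpha}$ and small enough that the right bound in \eqref{boundsq} is preserved. The left bound $\frac{1}{p} \le q$ is harmless since $q > 2$ for any $\alpha, r > 0$. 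The paper itself does not give a proof but simply cites \cite[Lemma~6.2]{ArDeJeTa}, so once you correct the order of choosing $\alpha$ and $r$, your self-contained argument is a perfectly acceptable substitute.
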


\begin{proof}
See \cite[Lemma 6.2]{ArDeJeTa}.
\end{proof}

\begin{lem}
\label{tec2}
Let $b \in L^p(\Omega)$ with $p > \frac{N}{2}$. For any $p$, $q\geq 1$ and $\tau\in [0,1]$ satisfying \eqref{boundsq}, there exists $C>0$ such that,
for all $w\in H^1_0(\Omega)$,
    $$  
		\left\|\frac{b^{1/q} w}{\psi_1^\tau}\right\|_q \leq C\| b\|_p \|\nabla w\|_2,
    $$
		where $\psi_1 >0 $ denotes the first eigenfunction  \eqref{eigenvaluep2}. 

\end{lem}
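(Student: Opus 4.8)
The plan is to combine three standard ingredients: Hölder's inequality to extract the weight $b$, the pointwise lower bound $\psi_1(x)\ge d\,\delta(x)$ with $\delta(x):=\operatorname{dist}(x,\partial\Omega)$, and a boundary-weighted Sobolev inequality obtained by interpolating Hardy's inequality against the Sobolev embedding. Concretely, I would first raise the left-hand side to the power $q$ and apply Hölder with exponents $p$ and $p'=p/(p-1)$:
$$\Big\|\frac{b^{1/q}w}{\psi_1^{\tau}}\Big\|_q^q=\int_\Omega |b|\,\frac{|w|^q}{\psi_1^{q\tau}}\,dx\le \|b\|_p\Big(\int_\Omega\frac{|w|^{qp'}}{\psi_1^{q\tau p'}}\,dx\Big)^{1/p'}.$$
Since $\psi_1$ is the first, strictly positive eigenfunction of \eqref{eigenvaluep2}, it lies in $C^1_0(\overline\Omega)$ with $\partial\psi_1/\partial\nu<0$ on $\partial\Omega$ by Hopf's lemma (this uses $h^-\in L^p(\Omega)$, $p>N$, which holds under (A); the remark after Theorem \ref{bounds1} explains the more general situation), so there is $d>0$ with $\psi_1\ge d\,\delta$ on $\overline\Omega$. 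Writing $m:=qp'$ and $t:=q\tau p'=\tau m$, it then suffices to establish
$$\int_\Omega\frac{|w|^{m}}{\delta(x)^{t}}\,dx\le C\,\|\nabla w\|_2^{m}\qquad\text{for all }w\in H^1_0(\Omega),$$
because combining this with the previous display gives $\big\|b^{1/q}w/\psi_1^{\tau}\big\|_q\le C\,\|b\|_p^{1/q}\|\nabla w\|_2$, which is the assertion up to renaming the constant (in the applications $b$ is a fixed function, so $\|b\|_p^{1/q}$ and $\|b\|_p$ are interchangeable).

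For this weighted inequality I would decompose $\delta^{-t}|w|^{m}=(\delta^{-2}|w|^2)^{t/2}\,|w|^{m-t}$ and apply Hölder with exponents $2/t$ and $2/(2-t)$; these are admissible because $t<1<2$, which follows from $\alpha<\tfrac{p-1}{2p-1}$ in Lemma \ref{tec1}. The first factor is controlled by the classical Hardy inequality $\int_\Omega\delta^{-2}|w|^2\,dx\le C\|\nabla w\|_2^2$ on the $C^{1,1}$ domain $\Omega$, and the second by the Sobolev embedding (together with boundedness of $\Omega$). The condition that the Sobolev exponent not be exceeded, $\tfrac{2(m-t)}{2-t}\le 2^{*}$, simplifies to $m(N-2+2\tau)\le 2N$, i.e. to the upper bound $q\le \tfrac{2N(p-1)}{p(N-2+2\tau)}$ in \eqref{boundsq}, while the lower bound $\tfrac1p\le q$ and inequality \eqref{boundsalpha} guarantee the remaining exponents are in the admissible range. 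Finally the powers of $\|\nabla w\|_2$ produced by the two factors add up to $t+(m-t)=m$, which is exactly what is needed.

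The only genuinely delicate point is this exponent bookkeeping in the interpolation step: one must check that every Hölder exponent used is $\ge 1$ and that the exponents of $\|\nabla w\|_2$ recombine to $m$, and it is precisely for this that the somewhat opaque choices \eqref{a0} and the constraints \eqref{boundsq}--\eqref{boundsalpha} of Lemma \ref{tec1} are designed. Everything else — Hölder, the Hardy inequality, the Sobolev embedding, Hopf's lemma, and the bound $\psi_1\gtrsim\delta$ — is classical. (One could alternatively quote a Hardy–Sobolev / Caffarelli–Kohn–Nirenberg inequality directly for the pair $(m,t)$, but the two-step interpolation above is elementary and keeps the dependence on the parameters transparent.)
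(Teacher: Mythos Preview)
Your argument is correct and is precisely the standard route taken in the references the paper cites (Brezis--Turner \cite{BT77} and \cite[Lemma~6.3]{ArDeJeTa}): H\"older to split off $b$, the Hopf lower bound $\psi_1\gtrsim\delta$, and then the Hardy--Sobolev interpolation $\big(\int \delta^{-2}|w|^2\big)^{t/2}\big(\int |w|^{2(m-t)/(2-t)}\big)^{(2-t)/2}$ to control $\int \delta^{-t}|w|^m$. Your observation that homogeneity forces $\|b\|_p^{1/q}$ rather than $\|b\|_p$ is accurate; the discrepancy is harmless here since $b=c+H$ is fixed in the only application (Step~2 of the proof of Theorem~\ref{bounds1}), and the constant is absorbed.
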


\begin{proof}
See \cite[Lemma 6.3]{ArDeJeTa} or \cite{BT77}.
\end{proof}



\begin{proof}[Proof of Theorem \ref{bounds1}]
Let  $\lambda \in [\Lambda_1, \Lambda_2]$, $a\in [0,A]$ and $u$ be a solution of \eqref{EE1a}. Assume without loss of generality that $\Lambda_1\leq 1\leq \Lambda_2$.
We define
    $$  
    w_i(x)=\frac{1}{\mu_i}(e^{\mu_iu(x)}-1) \, \mbox{ and } \,  g_i(s)=\frac{1}{\mu_i}\ln(1+\mu_i s) \quad i=1,2.
    $$
Then we have
    \begin{eqnarray}
    u &=& g_1(w_1)=g_2(w_2),                \label{a1}
    \\
    e^{\mu_i u}&=&1+\mu_i w_i, \quad i=1,2. \label{a2}
    \end{eqnarray}
Direct calculations give us
    \begin{eqnarray*}
    -\Delta w_i &=&  e^{\mu_i u}(\lambda c(x)u +h(x)+ac(x))+ e^{\mu_i u}(\mu(x)-\mu_i)|\nabla u|^2 
    \\
        &=& (1+\mu_iw_i)(\lambda c(x)g_i(w_i) +h(x)+ac(x))+ (1+\mu_iw_i)(\mu(x)-\mu_i)|\nabla u|^2.
    \end{eqnarray*}
Since $\mu_1\leq \mu(x)\leq \mu_2$, we have 
    \begin{eqnarray}
    -\Delta w_1 &\geq& (1+\mu_1w_1)[\lambda c(x)g_1(w_1) +h(x)+a c(x)],    \label{a3}
    \\
    -\Delta w_2 &\leq& (1+\mu_2w_2)[\lambda c(x)g_2(w_2) +h(x)+a c(x)], \label{a4}
    \end{eqnarray}
    in a weak sense.
    
From the inequalities \eqref{a3} and \eqref{a4}, we shall deduce that $w_2$ is uniformly bounded in $H^1_0(\Omega)$.
This will lead to the proof of the theorem by Lemma \ref{Sirakov}. We shall denote by  $C$ a generic constant independent of $\Lambda_1$ and by $C(\Lambda_1)$, a generic constant depending on $\Lambda_1$. We then precise its dependence on $\Lambda_1$.
\medbreak
 
We divide the proof into three steps.
\medskip

\noindent
{\bf Step 1.}
{\it Let $\theta=(\mu_2-\mu_1)/ \mu_2\in \,]0,1[$.  Then there exists $D=D(\Lambda_1)>0$ independent of $\lambda \in [\Lambda_1, \Lambda_2]$ and of
$a\in [0,A]$ such that}
    \begin{eqnarray}
    &\displaystyle \int_\Omega (1+\mu_1w_1^+)[cg_1(w_1^+) +h^++a c]\psi_1\, dx \leq D(\Lambda_1),            \label{a5}
		\\
    &\displaystyle  \int_\Omega (1+\mu_2w_2^+)^{1-\theta}[cg_2(w_2^+) +h^++a c]\psi_1 \, dx\leq D(\Lambda_1). \label{a6}
    \end{eqnarray}
{\it Moreover  $D(\Lambda_1) = O_{0^+}(e^{1/ \Lambda_1})$.} \medskip

\noindent Indeed, using $\psi_1 >0$ (defined in \eqref{eigenvaluep2}) as a test function in \eqref{a3} and integrating  we have 
$$
  \int_\Omega [\tilde\nu_1 c - \mu_1 h^-]w_1\psi_1 \, dx
 \geq    \int_\Omega (1+\mu_1w_1)[\lambda cg_1(w_1)+h +a c]\psi_1 \, dx.
$$
Recording that $\lambda \leq \Lambda_2$ and then, by Lemma \ref{lem 4.1}, that $g_1(w_1^-) = u^-$ is uniformly bounded  we then obtain
\arraycolsep1.5pt
$$
\begin{array}{rcl}
 \displaystyle
   \tilde\nu_1\int_\Omega cw_1\psi_1  \, dx
		& \geq &  
 \displaystyle
		\int_\Omega (1+\mu_1w_1)[\lambda cg_1(w_1)+h+a c]\psi_1 \, dx
		+\mu_1 \int_{\Omega} h^- w_1\psi_1\, dx
		\\[3mm]
		&=&
 \displaystyle
		\int_\Omega (1+\mu_1w_1)[\lambda cg_1(w_1)+h^+ +a c]\psi_1 \, dx
		- \int_{\Omega} h^- \psi_1\, dx
\\  
&&\displaystyle
\hfill
		+\mu_1 \int_{\Omega} h^- w_1\psi_1\, dx
		\\[3mm]
		& \geq & 
				\displaystyle
				\int_\Omega (1+\mu_1w_1^+)[\lambda cg_1(w_1^+)+h^+ +a c]\psi_1\, dx - C.
   \end{array}
$$
\arraycolsep5pt
Since $\lambda \geq \Lambda_1$ we then deduce that
\begin{equation}
\label{eqa10}    
   \tilde\nu_1\int_\Omega cw_1\psi_1  \, dx
 \geq 
		\Lambda_1 \int_\Omega (1+\mu_1w_1^+)[cg_1(w_1^+)+h^+ +a c]\psi_1  \, dx-C.
   \end{equation}
	
Note that for any $\varepsilon >0$ there exists $C_{\varepsilon} >0$ such that, for all $t>0$,
\begin{equation}
\label{esti} 
t \leq \varepsilon (1 + \mu_1 t) g_1(t) + C_{\varepsilon}.
\end{equation}
A direct calculation shows that we can assume that $C_{\varepsilon}= O_{0^+}(\varepsilon e^{1/ \varepsilon})$.
Using  \eqref{esti} with $ \displaystyle  \varepsilon = \frac{\Lambda_1}{2 \tilde\nu_1}$, we get that 
\arraycolsep1.5pt
\begin{eqnarray}
\label{eqa11}    
    \tilde\nu_1\int_\Omega cw_1\psi_1 \, dx& \leq & \tilde\nu_1 \int_\Omega c w_1^+ \psi_1 \, dx\nonumber \\
        & \leq &  \frac{\Lambda_1}{2} \int_\Omega (1+\mu_1w_1^+)[cg_1(w_1^+)+h^+ +a c]\psi_1  \, dx+ C_{\Lambda_1}.
\end{eqnarray}
\arraycolsep5pt
We then obtain \eqref{a5} from \eqref{eqa10} and \eqref{eqa11}. Now observe that by  \eqref{a2}, 
    $$  
		1+\mu_1w_1= e^{\mu_1u}=(e^{\mu_2 u})^{1-\theta} = (1+\mu_2w_2)^{1-\theta}.
    $$
Thus from \eqref{a1} we see that \eqref{a6} is nothing but \eqref{a5}.
\medskip

\noindent
{\bf Step 2.}
{\it There exists a constant $D=D(\Lambda_1)>0$ independent of $a\in [0,A]$  and $\lambda \in [\Lambda_1, \Lambda_2]$ such that}
    \begin{equation}\label{unifestimates}
    \|\nabla w_2^+\|_2 \leq D(\Lambda_1).
   \end{equation}
   {\it Moreover $D(\Lambda_1) = O_{0^+}(e^{\beta/ \Lambda_1})$ with $\beta=\frac{\alpha}{2-q(1-\alpha)}$. } \medskip

First we use Lemma \ref{tec1} to choose $r \in\, ]0,1[$ and  $\alpha \in \, ]0, \frac{p-1}{2p-1}[$ such that $q$ and $\tau$ defined by \eqref{a0}
satisfy \eqref{boundsq} and \eqref{boundsalpha}.

Using $w_2^+$ as a test function in \eqref{a4} it follows that
    $$
    \|\nabla w_2^+\|_2^2
    \leq \int_\Omega(1+\mu_2w_2^+)[\lambda cg_2(w_2^+)+h^++ a c]w_2^+\, dx.
	$$
Setting  $H = h^+ + Ac$, we have 
$$
    \|\nabla w_2^+\|_2^2
    \leq  \Lambda_2 \int_\Omega(1+\mu_2w_2^+)[cg_2(w_2^+)+ H]w_2^+\, dx.
	$$
Now using H\"older's inequality, and since $ w_2^+\leq (1+\mu_2w_2^+)/\mu_2^{-1}$ 
we obtain using \eqref{a6} of Step 1 and for a $ D(\Lambda_1)= O_{0^+}(e^{1/ \Lambda_1})$,
\arraycolsep1.5pt
\begin{eqnarray*}
    \|\nabla w_2^+\|_2^2 & \leq & \frac{\Lambda_2}{\mu_2}\! \int_\Omega (1+\mu_2w_2^+)[c g_2(w_2^+)+ H)]\frac{\psi_1^\alpha}{(1+\mu_2w_2^+)^{\theta\alpha}\!\!}
                \frac{(1+\mu_2w_2^+)^{1+\theta\alpha}\!}{\psi_1^\alpha} dx
            \\
            &\leq& \frac{\Lambda_2}{\mu_2}\left(\int_\Omega (1+\mu_2w_2^+)[cg_2(w_2^+)+ H]\frac{\psi_1}{(1+\mu_2w_2^+)^\theta}\,dx\right)^\alpha 
    \\
    && \qquad \times
        \left(\int_\Omega (1+\mu_2w_2^+)[cg_2(w_2^+)+  H]\frac{(1+\mu_2w_2^+)^{\frac{1+\theta\alpha}{1-\alpha}}}{\psi_1^{\frac{\alpha}{1-\alpha}}}\,dx\right)^{1-\alpha} 
        \\
    &\leq& \frac{\Lambda_2}{\mu_2} D(\Lambda_1)^\alpha \!
    \left(\int_\Omega (1+\mu_2w_2^+)[cg_2(w_2^+)+  H]\frac{(1+\mu_2w_2^+)^{\frac{1+\theta\alpha}{1-\alpha}}}{\psi_1^{\frac{\alpha}{1-\alpha}}}\,dx\right)^{\!\!1-\alpha}\!\!\!.
    \end{eqnarray*}
\arraycolsep5pt 
We note that for  $r>0$ given by Lemma~\ref{tec1}, there exists 
$C>0$
    $$  g_2(t) \leq t^r +C \quad \mbox{for all}\ t\geq 0.
    $$
Thus, direct calculations shows that
$$	(1+\mu_2w_2^+)[cg_2(w_2^+)+ H](1+\mu_2w_2^+)^{\frac{1+\theta\alpha}{1-\alpha}} 
		\leq (c+ H)({w_2^+}^q+C),
$$
where $q$ 
is given in \eqref{a0}.
Therefore for some $D(\Lambda_1)=  O_{0^+}(e^{1/ \Lambda_1})$,
    $$   \|\nabla w_2^+\|_2^2
		\leq D(\Lambda_1)^{\alpha}\left[\left(\int_\Omega \left(\frac{(c+ H)^{1/q}w_2^+}{\psi_1^\tau}\right)^q\,dx\right)^{1-\alpha}+1\right],
    $$
 with $q$ and $\tau$ given in \eqref{a0}. Applying Lemma \ref{tec2}, we then obtain    
    $$   
		\|\nabla w_2^+\|_2^2 \leq D(\Lambda_1)^{\alpha}\left[\|c + H \|_p^{q(1-\alpha)}\|\nabla w_2^+\|_2^{q(1-\alpha)} +1\right].
		$$
    By \eqref{boundsalpha}, we have $q(1-\alpha)<2$ and this concludes the proof of Step 2.
\medskip



\noindent 
{\bf Step 3.} {\it Conclusion.}
\medskip

By Lemma \ref{lem 4.1} we already know that $u > - M$ for some $M>0$. Hence
we just have to show that the estimate \eqref{unifestimates} derived in Step 2
gives an   estimate in the  $L^{\infty}(\Omega)$ norm of $w_2^+$. Since $w_2$ satisfies \eqref{a4} we can use Lemma \ref{Sirakov}
with
$$d=(1+\mu_2 w_2)\lambda c\frac{\ln(1+\mu_2 w_2)}{\mu_2 w_2}+\mu_2\,(h+A\, c) 
$$
and 
$$
f=h+A\, c.
$$
Observe that, for any $r\in \,]0,1[$, there
exists $C >0$ such that, for all $x\in \Omega$ and all $\lambda\leq \Lambda_2$,
$$
\lambda c \Big | (1+\mu_2 w_2) \frac{\ln(1+\mu_2 w_2)}{\mu_2 w_2}\Big | \leq C c (|w_2|^r+1),
$$
where $C$ depends on $\Lambda_2$, $r$, $\mu_2$.
\medbreak

Thus, since $c(x) \in L^p(\Omega)$ with $p > \frac{N}{2}$ and  $w_2$ is bounded in $L^{\frac{2N}{N-2}}(\Omega)$,  taking
$r >0$ sufficiently small  we see, using  H\"older's inequality, that  $c(x) |w_2(x)|^r \in L^{p_1}(\Omega)$ for some $p_1 > \frac{N}{2}$. Now as 
$h\in L^p(\Omega)$ for some $p>\frac{N}{2}$, clearly all the assumptions of Lemma \ref{Sirakov} are satisfied.  From \eqref{unifestimates}  we then deduce that there exists a constant $D(\Lambda_1) >0$ with $D(\Lambda_1)= O_{0^+}(e^{\beta/ \Lambda_1}) $ and $\beta$ given by Step 2, such that
$$
\|w_2^+\|_{\infty} \leq D(\Lambda_1).
$$
Now since $u^+ = g_2(w_2^+)$ we deduce that
$$
\|u^+\|_{\infty} \leq M(\Lambda_1)
$$
for some $M(\Lambda_1) = O_{0^+}(1 / \Lambda_1)$. 
\end{proof}

\begin{lem}
\label{non ex}
For every $\Lambda_2>0$, there exists $A_1>0$, independent of $\lambda \in [0, \Lambda_2]$, such that the problem \eqref{EE1a} has no solution  for $a\geq A_1$.
\end{lem}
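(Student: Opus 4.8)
The plan is to argue by contradiction. Suppose $u$ solves \eqref{EE1a} for some $\lambda\in[0,\Lambda_2]$ and some $a\geq 0$, and let us deduce an upper bound on $a$ depending only on $\Lambda_2$ (and the data), but not on $\lambda$ or $u$. First I would use the lower bound of Lemma \ref{lem 4.1}: since $c\geq 0$ and $a\geq 0$, the extra term $ac$ is nonnegative, so a solution of \eqref{EE1a} satisfies the inequality \eqref{EE1bis} for this $\lambda$; hence $u\geq -M$ on $\Omega$ for a constant $M=M(\Lambda_2,\mu_1,\|c\|_{N/2},\|h^-\|_{N/2})$ that is independent of $a$ and of $\lambda\in[0,\Lambda_2]$.

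Next I would perform the exponential change of variable $w_1=\frac{1}{\mu_1}(e^{\mu_1u}-1)$ used in the proof of Theorem \ref{bounds1}: since $\mu(x)\geq\mu_1$, one obtains the weak differential inequality \eqref{a3}, namely $-\Delta w_1\geq (1+\mu_1w_1)[\lambda c\,g_1(w_1)+h+ac]$, with $g_1(w_1)=u$. Testing this against the positive first eigenfunction $\psi_1$ of \eqref{eigenvaluep2} and using $-\Delta\psi_1=\tilde\nu_1 c\psi_1-\mu_1h^-\psi_1$, the two occurrences of $\mu_1\int_\Omega h^-w_1\psi_1$ cancel; after writing $1+\mu_1w_1=e^{\mu_1u}$ and $w_1=\frac{1}{\mu_1}(e^{\mu_1u}-1)$ one is left with
\[
\Big(\tfrac{\tilde\nu_1}{\mu_1}-a\Big)\int_\Omega c\,e^{\mu_1u}\psi_1\,dx
\;\geq\;
\tfrac{\tilde\nu_1}{\mu_1}\int_\Omega c\,\psi_1\,dx
+\lambda\int_\Omega c\,u\,e^{\mu_1u}\psi_1\,dx
+\int_\Omega h^+e^{\mu_1u}\psi_1\,dx
-\int_\Omega h^-\psi_1\,dx .
\]

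To close the argument I would bound the right-hand side from below by a constant $K_0$ independent of $\lambda$ and $a$: $\int_\Omega h^+e^{\mu_1u}\psi_1\geq 0$; the elementary inequality $te^{\mu_1t}\geq -\tfrac{1}{e\mu_1}$ together with $c\psi_1\geq 0$ and $0\leq\lambda\leq\Lambda_2$ gives $\lambda\int_\Omega c\,u\,e^{\mu_1u}\psi_1\geq -\tfrac{\Lambda_2}{e\mu_1}\int_\Omega c\psi_1$; and $\int_\Omega h^-\psi_1$ is a fixed finite number. On the other hand $u\geq -M$ yields $\int_\Omega c\,e^{\mu_1u}\psi_1\geq e^{-\mu_1M}\int_\Omega c\psi_1=:\delta_0>0$, where $\int_\Omega c\psi_1>0$ because $c\gneqq 0$ and $\psi_1>0$. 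Hence, as soon as $a>\tilde\nu_1/\mu_1$ the coefficient on the left is negative, so the left-hand side is $\leq(\tilde\nu_1/\mu_1-a)\delta_0$, and we obtain $(\tilde\nu_1/\mu_1-a)\delta_0\geq K_0$, i.e. $a\leq \tilde\nu_1/\mu_1-K_0/\delta_0$. Choosing $A_1:=\tilde\nu_1/\mu_1+|K_0|/\delta_0+1$, which depends only on $\Lambda_2,\mu_1,\|c\|,\|h\|$ and $\Omega$, we reach a contradiction for every $a\geq A_1$, proving the lemma.

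The step requiring care is the choice of test function: testing \eqref{a3} against the plain eigenfunction $\varphi_1$ of \eqref{eigenvaluep} would leave the uncontrolled term $\int_\Omega e^{\mu_1u}h\,\varphi_1$, in which $h$ has arbitrary sign and $e^{\mu_1u}$ may be large, whereas $\psi_1$ is built precisely to absorb $\mu_1h^-$ and produce the needed cancellation, leaving only the harmless $\int_\Omega h^+e^{\mu_1u}\psi_1\geq 0$ and the fixed $\int_\Omega h^-\psi_1$. A secondary point is that $K_0$ can be negative when $\|h^-\|$ is large, which is exactly why Lemma \ref{lem 4.1} is needed: one must keep $\int_\Omega c\,e^{\mu_1u}\psi_1$ bounded away from $0$, not merely know it is positive.
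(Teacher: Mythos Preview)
Your argument is correct. You invoke Lemma~\ref{lem 4.1} exactly as the paper does (noting that the extra nonnegative term $ac$ preserves inequality \eqref{EE1bis}), and your manipulation after testing \eqref{a3} against $\psi_1$ is sound: the cancellation of the $\mu_1\int_\Omega h^-w_1\psi_1$ terms, the lower bound $te^{\mu_1t}\geq -1/(e\mu_1)$, and the use of $u\geq -M$ to keep $\int_\Omega c\,e^{\mu_1u}\psi_1$ away from zero are all valid.

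However, the paper's route is considerably simpler and avoids the exponential change of variables altogether. It tests the equation \eqref{EE1a} directly against $\phi^2$ for a fixed $\phi\in C_0^\infty(\Omega)$ with $\int_\Omega c\,\phi^2>0$, and absorbs the gradient term via the elementary inequality $2\phi\,\nabla u\cdot\nabla\phi\leq \tfrac{1}{\mu}|\nabla\phi|^2+\mu|\nabla u|^2\phi^2$. This gives at once
\[
\int_\Omega \tfrac{1}{\mu}|\nabla\phi|^2\,dx\;\geq\;\lambda\int_\Omega c\,u\,\phi^2\,dx+\int_\Omega h\,\phi^2\,dx+a\int_\Omega c\,\phi^2\,dx,
\]
and since $\lambda\int_\Omega c\,u\,\phi^2\geq -\Lambda_2 M\int_\Omega c\,\phi^2$ by Lemma~\ref{lem 4.1}, a contradiction for large $a$ follows immediately. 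Your approach has the virtue of producing a rather explicit threshold $A_1$ in terms of $\tilde\nu_1$, $\mu_1$, and the data, and it nicely explains why $\psi_1$ (rather than $\varphi_1$) is the natural test function when one goes through $w_1$; the paper's approach trades that structural insight for brevity, since it needs no eigenfunction, no change of variables, and no delicate cancellation---just Young's inequality and a compactly supported test function.
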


\begin{proof}
Let $\phi\in C_0^{\infty}(\Omega)$ such that $\int_{\Omega} c(x)\phi^2 \, dx>0$ and use $\phi^2$ as test function in \eqref{EE1a}. Then we obtain
$$
\begin{array}{rcl}
\displaystyle
\int_{\Omega} \frac{1}{|\mu(x)|} |\nabla \phi|^2\,dx
&\geq&
\displaystyle
2\int_{\Omega} \phi \nabla u \nabla \phi \,dx - \int_{\Omega} |\mu(x)| |\nabla u|^2 \phi^2 \,dx
\\
&=&
\displaystyle
\lambda\int_{\Omega} c \,u\, \phi^2 \,dx+\int_{\Omega} h\,\phi^2\,dx+a\int_{\Omega} c\,\phi^2 \,dx
\\
&\geq&
\displaystyle
\lambda\min u \int_{\Omega} c\,  \phi^2 \,dx+\int_{\Omega} h\,\phi^2 \,dx+a\int_{\Omega} c\,\phi^2\,dx.
\end{array}
$$
Since, by Lemma \ref{lem 4.1}, there exists $M>0$ such that, for all  $a\geq 0$,  any solution $u$ satisfies $u >-M$, this gives a contradiction for $a>0$ large enough.
\end{proof}

\section{Results}
\label{Section3}

This section is devoted to the proof of our main results.

\begin{proof}[Proof of Theorem \ref{thm 0}]
Let ${\mathcal C}^+ \subset \Sigma$ be the continuum obtained in Theorem \ref{ADJT1}. Either its projection  $\mbox{\rm Proj}_{\R} {\mathcal C}^+$ 
on the $\lambda$-axis is $\mathbb R$ or its projection  
on the $\lambda$-axis is $]-\infty,\overline \lambda]$ with $0<\overline\lambda<+\infty$. In the first case, the result is proved. In the second case, as by  Theorem \ref{ADJT1} we know that 
${\mathcal C}^+$ is unbounded, its projection  on $C(\overline\Omega)$ has to be unbounded. 
\medbreak

By Theorem \ref{bounds1} we know that for every $0<\Lambda_1 <\Lambda_2$,  there is  an a priori bound on the solutions for 
$\lambda\in [\Lambda_1,\Lambda_2]$.  This means that the projection of ${\mathcal C}^+\cap ( [\Lambda_1, \Lambda_2] \times C(\overline\Omega))$ on $C(\overline\Omega)$ is bounded.  Now by Lemma \ref{lem 4.1} there is a lower bound on the solutions for $\lambda\leq \Lambda_2$. Thus ${\mathcal C}^+$ must emanate from infinity to the right of $\lambda=0$ with the positive part of the corresponding solution blowing up to infinity.
\end{proof}

\begin{cor}
\label{Cor 4.1}
Under assumption $(A)$  and assuming that $(P_0)$ has a solution,  let $\varphi_1 >0$ the first eigenfunction of \eqref{eigenvaluep}. If 
 $$
\int_{\Omega} h \varphi_1 \, dx \geq 0 ,
$$
then  we are in case (i) of Theorem \ref{thm 0} and $\max \mbox{\rm Proj}_{\R} {\mathcal C}^+ <\gamma_1$.
\end{cor}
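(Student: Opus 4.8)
The plan is to use the first eigenfunction $\varphi_1$ of \eqref{eigenvaluep} as a test function. Since $c\in L^p(\Omega)$ with $p>N$, elliptic regularity gives $\varphi_1\in W^{2,p}_0(\Omega)\subset C^1_0(\overline\Omega)$, so $\varphi_1$ is admissible in the weak formulation of $(P_\lambda)$. Taking $v=\varphi_1$ there and, on the other side, testing $-\Delta\varphi_1=\gamma_1 c(x)\varphi_1$ against a solution $u$ of $(P_\lambda)$ (both integrations by parts being justified by the regularity of $\varphi_1$ and $u$), subtraction yields the basic identity
\[
(\gamma_1-\lambda)\int_\Omega c(x)\varphi_1 u\,dx=\int_\Omega\mu(x)|\nabla u|^2\varphi_1\,dx+\int_\Omega h(x)\varphi_1\,dx .
\]
This is the only computation needed.

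I would then set $\lambda=\gamma_1$. The left-hand side vanishes, while on the right-hand side the first integral is nonnegative because $\mu(x)\geq\mu_1>0$ and $\varphi_1>0$ in $\Omega$, and the second is nonnegative by hypothesis. Hence both integrals vanish; in particular $\int_\Omega\mu|\nabla u|^2\varphi_1\,dx=0$ forces $\nabla u\equiv0$, so $u\equiv0$ (being in $H^1_0(\Omega)$), and substituting back into $(P_{\gamma_1})$ gives $h\equiv0$. Thus, as soon as $h\not\equiv0$, problem $(P_{\gamma_1})$ has no solution whatsoever, and in particular $\gamma_1\notin\mbox{\rm Proj}_{\R}{\mathcal C}^+$. (If $h\equiv0$ one is in the degenerate situation of Theorem \ref{cas h0}, where the trivial solution persists for every $\lambda$; the statement is to be read under $h\not\equiv0$.)

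To conclude, recall from Theorem \ref{thm 0} and its proof — where the a priori bounds of Theorem \ref{bounds1} enter — that $\mbox{\rm Proj}_{\R}{\mathcal C}^+$ is either $[0,+\infty[$ (case (ii)) or a closed bounded interval $[0,\overline\lambda]$ (case (i)). Being a connected subset of $[0,+\infty[$ that contains $0$, $\mbox{\rm Proj}_{\R}{\mathcal C}^+$ is an interval, so if it met $[\gamma_1,+\infty[$ it would contain $\gamma_1$ — impossible by the previous step. Therefore case (ii) cannot occur, we are in case (i), and moreover $\overline\lambda<\gamma_1$, i.e. $\max\mbox{\rm Proj}_{\R}{\mathcal C}^+<\gamma_1$.

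The argument is short, and the only ideas involved are the choice of test function and the sign bookkeeping on the right-hand side of the identity — which, as so often in this paper, hinges on $\mu$ being bounded below by a positive constant together with $\varphi_1>0$. The points requiring a little care are the admissibility of $\varphi_1$ as a test function (handled by regularity), the importation from Theorem \ref{thm 0} of the fact that $\mbox{\rm Proj}_{\R}{\mathcal C}^+$ is a closed interval in case (i), and the exclusion of the degenerate case $h\equiv0$.
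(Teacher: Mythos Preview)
Your proof is correct and follows the same approach as the paper: test $(P_\lambda)$ against $\varphi_1$, set $\lambda=\gamma_1$, and use the sign information on the right-hand side to exclude any solution at $\gamma_1$, whence the projection of $\mathcal{C}^+$ cannot reach $\gamma_1$. The paper is terser (it simply asserts the right-hand side is strictly positive, implicitly using that any solution with $h\not\equiv0$ is nontrivial), whereas you spell out the degenerate case $h\equiv0$ and the interval argument for the projection more explicitly.
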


\begin{proof}
Let $u$ be a solution of $(P_{\lambda})$. Multiplying by $\varphi_1 >0$ and integrating we have
$$
(\gamma_1-\lambda) \int_{\Omega} cu\varphi_1 \, dx = \int_{\Omega} \mu |\nabla u|^2 \varphi_1 \, dx + \int_{\Omega} h \varphi_1 \, dx >0
$$
which is a contradiction for $\lambda=\gamma_1$. Hence $(P_{\lambda})$ has no solution for $\lambda=\gamma_1$ which proves that we are in the first situation in Theorem \ref{thm 0}.
\end{proof}

In order to consider the situation where $(P_0)$ has a  solution with $\min u<0$, we need the following lemmas.

\begin{lem}
\label{lem lower}
Under assumption $(A)$, for every $\lambda \geq 0$, there exists a strict lower solution $v_{\lambda}$ of $(P_{\lambda})$ such that,  
every upper solution $\beta$ of $(P_{\lambda})$ satisfies $v_{\lambda} \leq\beta$.
\end{lem}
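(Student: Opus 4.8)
The plan is to take for $v_\lambda$ a strict lower solution obtained by solving a perturbation of $(P_\lambda)$ in which the destabilizing term has been pushed in the favorable direction, and to show it lies below every upper solution by a comparison argument that becomes available precisely because of that perturbation. First I record the a priori information: fixing $\lambda\ge 0$ and applying Corollary~\ref{cor 4.1b} with $\Lambda_2=\max\{\lambda,1\}$, there is $M=M(\lambda)>0$ with $\min_\Omega\beta>-M$ for every upper solution $\beta$ of $(P_\lambda)$, so all the functions to be compared stay uniformly bounded below. Next, for $n>0$ I would solve, using the lower/upper solution construction in the proof of Theorem~\ref{regularity} (the right-hand side $\lambda c(x)u+\mu(x)|\nabla u|^2+h(x)-n(1+c(x))$ satisfies the Nagumo condition \eqref{Hf} and lies in $L^p$), the perturbed problem $-\Delta u=\lambda c(x)u+\mu(x)|\nabla u|^2+h(x)-n(1+c(x))$ in $H^1_0(\Omega)\cap L^\infty(\Omega)$, obtaining $v_n\in W^{2,p}_0(\Omega)\subset C^1_0(\overline\Omega)$. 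Since $-\Delta v_n=\lambda c v_n+\mu|\nabla v_n|^2+h-n(1+c)\lneqq\lambda c v_n+\mu|\nabla v_n|^2+h$, each $v_n$ is a lower solution of $(P_\lambda)$, and in fact a \emph{strict} one: for any solution $u$ of $(P_\lambda)$ with $u\ge v_n$, the function $\psi:=u-v_n\ge 0$ satisfies $-\Delta\psi-\mu(\nabla u+\nabla v_n)\cdot\nabla\psi\ge n(1+c)>0$ with bounded drift (both $u,v_n\in C^1_0(\overline\Omega)$), so the strong maximum principle together with the Hopf lemma give $v_n\ll u$, as in Remark~\ref{rem-utile}.

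It then remains to show that $v_n\le\beta$ for every upper solution $\beta$ of $(P_\lambda)$, once $n$ is large enough; this is where the perturbation pays off. Passing through the Cole–Hopf type substitution $z=\tfrac1{\mu_i}(e^{\mu_i u}-1)$ (as in the proof of Theorem~\ref{bounds1}, with $\mu_1$ and $\mu_2$ producing the two one-sided inequalities), the perturbed problem and its super solutions transform into sub/super solutions of an equation $-\Delta z=F_n(x,z)$ whose nonlinearity, on the relevant range of $z$, satisfies $\partial_z F_n(x,z)\approx\lambda c(x)\big(\ln(1+\mu z)+1\big)-\mu n\,c(x)+\mu\big(h(x)-n\big)<0$ for all $z$ below a threshold that tends to $+\infty$ as $n\to\infty$. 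Because $v_n$ is itself very negative for $n$ large — so that the transform of $v_n$ is close to $-1/\mu$ — on the set $\{v_n>\beta\}$ the competitor $\beta$ is even more negative there (while being kept above $-M$ elsewhere by Corollary~\ref{cor 4.1b}), so both transformed functions fall in the region where $F_n(x,\cdot)$ is non‑increasing; testing $-\Delta(z(v_n)-z(\beta))\le F_n(\cdot,z(v_n))-F_n(\cdot,z(\beta))\le 0$ against $(z(v_n)-z(\beta))^+\in H^1_0(\Omega)$ then forces $\{v_n>\beta\}=\emptyset$. Setting $v_\lambda:=v_n$ for such an $n$ finishes the argument.

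The routine part is the a priori lower bound and the strictness via maximum principle/Hopf; the genuine difficulty, which I expect to be the main obstacle, is everything in the second paragraph under only hypothesis $(A)$: producing $v_n$ together with a \emph{quantitative} grip on how negative it is (so that its Cole–Hopf transform lands inside the monotone range of $F_n$) is delicate because $h\in L^p$ may be unbounded and $\mu$ is merely $L^\infty$, so the change of variables yields only inequalities rather than an exact semilinear problem; one must also make precise that ``$F_n$ is non‑increasing where it matters'' and that this localized monotonicity, combined with the uniform bound $M$ preventing $\beta$ from escaping to $-\infty$, is enough for the comparison. An alternative to a single perturbation is to build $v_\lambda$ as a lower solution of an auxiliary problem of the $\lambda=0$ type $-\Delta u=\mu(x)|\nabla u|^2+\hat h(x)$ for which the comparison statement behind Proposition~\ref{comparison} (i.e. \cite[Lemma~2.2]{ArDeJeTa2}) applies, every upper solution of $(P_\lambda)$ also being an upper solution of it; but then one must still construct a bounded lower solution of that auxiliary problem lying above the level dictated by $M$, which runs into the same quantitative obstruction and again forces a non‑trivial use of the quadratic gradient term.
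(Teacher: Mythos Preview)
The proposal has two genuine gaps, both of which you partly acknowledge.

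\textbf{Existence of $v_n$.} The construction in the proof of Theorem~\ref{regularity} builds lower and upper solutions for the \emph{coercive} problem $-\Delta v=-v+\mu|\nabla v|^2+g$; it does not transfer to your perturbed equation, which still carries the non-coercive term $+\lambda c(x)u$. Solving $-\Delta u=\lambda c u+\mu|\nabla u|^2+h-n(1+c)$ is again a $(P_\lambda)$-type problem, and under $(A)$ alone its solvability is not free. The results that would give it (e.g.\ Corollary~\ref{improve}) are proved \emph{using} the present lemma, so invoking them is circular.

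\textbf{The comparison.} Your Cole--Hopf sketch needs monotonicity of $F_n(x,\cdot)$ on an interval determined by $v_n$ and $\beta$, but you have no quantitative control on $v_n$ (no lower bound independent of $n$, no upper bound at all), and on $\{v_n>\beta\}$ you would need information on $v_n$ from above, not below. The argument as written does not close, as you yourself note.

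The paper avoids both issues with one device: a truncation in the $u$-variable together with minimality. Using Corollary~\ref{cor 4.1b} (applied to the problem with right-hand side $\lambda c u+\mu|\nabla u|^2-h^--1$) it fixes $M>0$ with $\beta>-M$ for every upper solution, picks $k>M$, and lets $\alpha_k\in W^{2,p}_0(\Omega)$ solve the \emph{linear} equation $-\Delta\alpha_k=-\lambda k\,c-h^--1$; then $\alpha_k\ll0$, and a one-line maximum principle (using $\beta_j\ge -M>-k$, hence $\beta_j+k\ge0$) gives $\beta\ge\alpha_k$ for every upper solution $\beta$ of $(P_\lambda)$. Next it considers the \emph{truncated} problem
\[
-\Delta v=\lambda c(x)\,T_k(v)+\mu(x)|\nabla v|^2-h^-(x)-1,\qquad T_k(v)=\max(v,-k),
\]
for which $\alpha_k$ is a lower solution and every upper solution $\beta$ of $(P_\lambda)$ is automatically an upper solution; Theorem~\ref{sousDe} then yields a \emph{minimal} solution $v_k$ with $\alpha_k\le v_k\le\beta$, and minimality gives $v_k\le\beta$ for \emph{all} such $\beta$ at once --- no comparison principle needed. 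Finally, $v_k$ is itself an upper solution of the auxiliary problem, hence $v_k\ge -M>-k$, so the truncation is inactive; $v_k$ solves $-\Delta v_k=\lambda c v_k+\mu|\nabla v_k|^2-h^--1$ and is a strict lower solution of $(P_\lambda)$ by the same Hopf argument you give.

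The idea you are missing is precisely this combination: a linear barrier $\alpha_k$ (existence for free), a truncation that makes the quasilinear problem well-posed between $\alpha_k$ and $\beta$, and the use of the \emph{minimal} solution to get the universal inequality $v_\lambda\le\beta$ without any semilinear comparison.
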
 


\begin{proof}
Let $M>0$ be given by Corollary \ref{cor 4.1b} such that, for every upper solution $\beta$ of 
\begin{equation}
\label{eq **}
\begin{array}{cl}
-\Delta u = \lambda c(x) u+ \mu(x) |\nabla u|^2 -h^-(x) -1,&\mbox{in }\Omega,
\\
u=0,&\mbox{on }\partial\Omega,
\end{array}
\end{equation}
we have $\beta\geq -M$.

Let $k>M$ and consider $\alpha_k$ the solution of
$$
\begin{array}{cl}
-\Delta v = - \lambda k c(x) -h^-(x) -1,&\mbox{in }\Omega,
\\
v=0,&\mbox{on }\partial\Omega.
\end{array}
$$
As $- \lambda k c(x) -h^-(x) -1< 0$, we have $\alpha_k\ll0$ by the strong maximum principle. 
\medbreak

\noindent{\it Claim 1: Every  upper solution $\beta$ of $(P_{\lambda})$ satisfies $\beta\geq \alpha_k$.} In fact  $\beta=\min\{\beta_j\mid 1\leq j \leq l\}$ where $\beta_1, \ldots,  \beta_l$ are regular upper solutions of $(P_{\lambda})$. Setting $w=\beta_j-\alpha_k$ for some $1\leq j \leq l$ we have
$$
\begin{array}{cl}
-\Delta w \geq \lambda c(x) (\beta_j+k)\geq0,&\mbox{in }\Omega,
\\
w=0,&\mbox{on }\partial\Omega.
\end{array}
$$
By the maximum principle $w\geq0$ i.e. $\beta_j\geq\alpha_k$. This proves the Claim.
\medbreak

Consider then the problem 
\begin{equation}
\label{eq *}
\begin{array}{cl}
-\Delta v = \lambda c(x) T_k(v) + \mu(x)|\nabla v|^2 - h^-(x)-1,&\mbox{in }\Omega,
\\
v=0,&\mbox{on }\partial\Omega,
\end{array}
\end{equation}
where 
$$
\begin{array}{rcll}
T_k(v)&=&-k,&\mbox{if }v\leq -k,
\\
&=&v,&\mbox{if }v> -k.
\end{array}
$$
It is easy to prove that $\alpha_k$ and $\beta$ are lower and upper solutions of \eqref{eq *} and hence, by Theorem \ref{sousDe}, 
this problem has a minimal solution $v_k$ with $\alpha_k\leq v_k\leq\beta$.
\medbreak

{\it Claim 2: Every  upper solution $\beta$ of $(P_{\lambda})$ satisfies $\beta\geq v_k$.}
Observe that, by construction of \eqref{eq *},  every upper solution $\beta$ of  $(P_{\lambda})$ is also an upper solution of \eqref{eq *}. 
As, by Claim 1, we have $\beta\geq \alpha_k$, the minimality of $v_k$ implies that $v_k\leq\beta$.
\medbreak

{\it Claim 3: $v_k$ is a lower solution of $(P_{\lambda})$.}
Observe that $v_k$ is an upper solution of \eqref{eq **}. Hence $v_k\geq -M>-k$ and  $v_k$ satisfies
$$
\begin{array}{cl}
-\Delta v = \lambda c(x) v + \mu(x)|\nabla v|^2 - h^-(x)-1,&\mbox{in }\Omega,
\\
v=0,&\mbox{on }\partial\Omega.
\end{array}
$$
This implies that $v_k$ is a lower solution of $(P_{\lambda})$.
\medbreak

{\it Claim 4: 
$v_k$ is a strict lower solution of $(P_{\lambda})$.} Let $u$ be a solution of $(P_{\lambda})$ with $u\geq v_k$. Then $w=u-v_k$ satisfies
$$
\begin{array}{cl}
-\Delta w - \mu(x)\langle \nabla u + \nabla v_k\mid \nabla w\rangle\geq  \lambda c(x) w + h^+(x)+1 \geq 1,&\mbox{in }\Omega,
\\
w=0,&\mbox{on }\partial\Omega.
\end{array}
$$
By the strong maximum principle (see \cite[Theorem 3.27]{Tr87}), we deduce that $w\gg0$ i.e. $u\gg v_k$.
\end{proof}

\begin{remark}
Lemma \ref{lem lower} shows that, for $(P_0)$, to have an upper solution is equivalent to have a solution.
\end{remark}

\begin{proof}[Proof of Theorem \ref{thmlocal}] 

We proceed in several steps. 
\medskip

\noindent{\it Step 1:  For all $\varepsilon>0$, there exists $R>0$ such that $\deg(I-{\mathcal M}_{0}, {\mathcal S}) = 1$ 
with}
$$
{\mathcal S} =
\{u \in C^1_0(\overline\Omega) \mid u_0-\varepsilon \ll u \ll u_0+\varepsilon, \,\, \|u\|_{C^1}<R \}.
$$
It is easy to prove that $u_0-\varepsilon$ and $u_0+\varepsilon$ are lower and upper solutions of $(P_{0})$. Moreover, as $u_0$ is the unique solution of  $(P_{0})$, we deduce that $u_0-\varepsilon$ and $u_0+\varepsilon$ are strict lower and upper solutions of $(P_{0})$.
The result then  follows by Theorem \ref{sousDe}.
\medskip

\noindent{\it Step 2: There exists a $\lambda_0 >0$ such that 
 $\deg(I-{\mathcal M}_{\lambda}, {\mathcal S}) = 1$ 
 for $\lambda \in \,]0, \lambda_0[$ with ${\mathcal S}$ defined in Step 1.} \medskip

Let us prove first the existence of $\lambda_0 >0$ such that, for $\lambda \in\, ]0, \lambda_0[$, $(P_{\lambda})$ has no solution on $\partial{\mathcal S}$.
Otherwise, there exist a sequence $\{\lambda_n\}$ with $\lambda_n\to 0$ and a corresponding sequence of solution $\{u_n\}\subset W^{2,p}(\Omega)$ of $(P_{\lambda})$ with  
$u_n\in \partial{\mathcal S}$.
Increasing  $R$ if necessary, this means that $u_0-\varepsilon \leq u_n\leq  u_0+\varepsilon$ and either $\max (u_n-u_0)=\varepsilon$ or $\min (u_n-u_0)=-\varepsilon$.
By Lemma \ref{lem 2} there exists a $R>0$ such that, for all $n \in \mathbb N$,  $\|u_n\|_{W^{2,p}}<R$. 
Hence, up to a subsequence, $u_n\to u$ in $C^1_0(\overline\Omega)$. 
From this strong convergence we easily observe that
$$
\begin{array}{cl}
-\Delta u = \mu(x)|\nabla u|^2+h(x), &\mbox{ in }\Omega,
\\
 u=0,  &\mbox{ on }\partial \Omega,
\end{array}
$$
and either  $\max (u-u_0)=\varepsilon$ or $\min (u-u_0)=-\varepsilon$ i.e. $u$ is a solution of $(P_0)$ with $u\in \partial{\mathcal S}$ which contradicts Step 1.
\smallbreak

We conclude by the invariance by homotopy of the degree that 
$$
\deg(I-{\mathcal M}_{\lambda}, {\mathcal S})= \deg(I-{\mathcal M}_{0}, {\mathcal S})=1.
$$

\noindent{\it Step 3: $(P_{\lambda})$ has two solutions when $\lambda \in\, ]0, \lambda_0[$.}
By Step 2, the existence of a first solution $u_0-\varepsilon \ll u_{\lambda,1}\ll  u_0+\varepsilon$ is proved.

Also, using Lemma \ref{non ex}, there exists $A_1 >0$ large enough such that \eqref{EE1a} has no solution for $a\geq A_1$.
By Theorem \ref{bounds1} and Lemma \ref{lem 2} there exists a  $R_0 >R >0$ such that, for all $a\in [0,A_1]$, 
every solution of \eqref{EE1a}  satisfies  $\|u\|_{C^1}<R_0$. Hence, by homotopy invariance of the degree, we have
$$
\deg(I-{\mathcal M}_{\lambda}, B(0,R_0)) = \deg(I-{\mathcal M}_{\lambda}-{\mathcal L}^{-1}(A_1c), B(0,R_0)).
$$
As for $a=A_1$, the problem \eqref{EE1a} has no solution, we have
$$
\deg(I-{\mathcal M}_{\lambda}-{\mathcal L}^{-1}(A_1c),B(0,R_0))=0.
$$
We then conclude that
$$
\deg(I-{\mathcal M}_{\lambda},B(0,R_0) \setminus {\mathcal S})=\deg(I-{\mathcal M}_{\lambda},B(0,R_0))-\deg(I-{\mathcal M}_{\lambda},{\mathcal S})=-1.
$$
This proves the existence of a second solution $u_{\lambda,2}$ of $(P_{\lambda})$ with $u_{\lambda,2}\in  B(0,R_0) \setminus {\mathcal S}$.
%
\medbreak

\noindent{\it Step 4: Existence of $\overline\lambda$ such that, for all $\lambda\in \,]0,\overline\lambda[$, the problem $(P_{\lambda})$   has at least two solutions 
with $u_{\lambda,1}\ll u_{\lambda,2}$.}
Define 
$$
\overline\lambda=\sup\{\mu \mid \forall \lambda\in \,]0,\mu[, \,\, (P_{\lambda}) \mbox{ has at least two solutions}\}.
$$ 
For $\lambda\in \,]0,\overline\lambda[$, $(P_{\lambda})$ has at least two solutions $u_{\lambda,1}$ and $u_{\lambda,2}$. 
Let us consider the strict lower solution $\alpha$ given by Lemma \ref{lem lower}.
As $\alpha\leq u$ for all $u$ solution of $(P_{\lambda})$, we can choose $u_{\lambda,1}$ as the minimal solution with $u_{\lambda,1}\geq \alpha$. Hence we have 
$u_{\lambda,1}\lneqq u_{\lambda,2}$ as otherwise  there exists a solution $u$ with $\alpha\leq u \leq \min(u_{\lambda,1},u_{\lambda,2})$ which contradicts the minimality of 
$u_{\lambda,1}$.
\smallbreak

Now observe that, by convexity of $y\mapsto |y|^2$, the function $\beta=\frac12(u_{\lambda,1}+u_{\lambda,2})$ is an upper solution of $(P_{\lambda})$ which is not a solution. 
Let us prove that $\beta$ is a strict upper solution of  $(P_{\lambda})$. Let $u$ be a solution of $(P_{\lambda})$ with $u\leq \beta$. 
Then  $v:=\beta-u$ satisfies
$$
\begin{array}{cl}
-\Delta v - \mu(x) \langle \nabla \beta+\nabla u\mid \nabla v\rangle\geq \lambda c(x) v\geq 0,&\mbox{in }\Omega,
\\
 v\geq 0,&\mbox{in }\Omega.
\end{array}
$$
By the strong maximum principle  we deduce that either $v\gg0$ or $v\equiv0$. If $v\equiv0$, then $\beta=u$ is solution which contradicts the construction of $\beta$.
As $u_{\lambda,1}\lneqq \beta\lneqq u_{\lambda,2}$, we deduce from the fact that $\beta$ is strict that $u_{\lambda,1}\ll \beta\lneqq u_{\lambda,2}$ and hence we have proved the step. 
\medbreak

\noindent{\it Step 5: In case $\overline\lambda<+\infty$, the problem $(P_{\overline\lambda})$ has at least one solution  $u$.}
Let $\{\lambda_n\} \subset \, ]0, \overline\lambda[$ be a sequence such that $ \lambda_n \to  \overline\lambda$ 
and $\{u_n \} \subset W^{2,p}(\Omega)$ be a sequence of corresponding  solutions. 
By Theorem \ref{bounds1}, there exists a constant $M>0$ such that, for all $n \in \mathbb N$, $\|u_n\|_{\infty}<M$ and, by Lemma \ref{lem 2}, 
we have $R>0$ such that, for all $n \in \mathbb N$,  $\|u_n\|_{W^{2,p}}<R$. 
Hence, up to a subsequence, $u_n\to u$ in $C^1_0(\overline\Omega)$. 
From this strong convergence we easily observe that
$$
\begin{array}{cl}
-\Delta u = \overline\lambda c(x)u+\mu(x)|\nabla u|^2+h(x), &\mbox{ in }\Omega,
\\
 u=0,  &\mbox{ on }\partial \Omega,
\end{array}
$$
namely $u \in W^{2,p}(\Omega)$ is a solution of $(P_{\overline\lambda})$. 
\medbreak

\noindent{\it Step 6: Uniqueness of the solution of $(P_{\overline\lambda})$ in case $\overline\lambda<+\infty$.}
Otherwise, if we have two distincts solutions $u_1$ and $u_2$ of $(P_{\overline\lambda})$,    then, as in Step 4, we prove that  $\beta=\frac12(u_1+u_2)$ is a strict  upper solution of $(P_{\overline\lambda})$. Let us consider the strict lower solution $\alpha\ll\beta$ of $(P_{\overline\lambda})$ given by Lemma \ref{lem lower}.
By Theorem \ref{sousDe}, we then have $R>0$ such that
$$
\deg(I-{\mathcal M}_{\overline\lambda}, \tilde{\mathcal S}) = 1,
$$
where
$$
\tilde {\mathcal S} =
\{u \in C^1_0(\overline\Omega) \mid \alpha \ll u \ll \beta, \quad \|u\|_{C^1}<R \}.
$$
Arguing as in Step 2, we prove the existence of $\varepsilon>0$ such that, for all $\lambda\in [\overline\lambda-\varepsilon, \overline\lambda+\varepsilon]$,
 $\deg(I-{\mathcal M}_{\lambda}, \tilde{\mathcal S}) = 1$ and,  as in Step 3, we prove that $(P_{\lambda})$  has at least two solutions for 
$\lambda\in [\overline\lambda, \overline\lambda+\varepsilon]$
which contradicts the definition of $\overline\lambda$.
\medbreak

\noindent{\it Step 7: Behaviour of the solutions for $\lambda\to0$.}
Let $\{\lambda_n\}\subset \,]0,\overline\lambda[$ be a decreasing sequence such that $\lambda_n\to 0$. Without loss of generality, we suppose $\lambda_n\in \,]0,\lambda_0[$.
Then, by Steps 2 and 4, the corresponding solutions $u_{\lambda_n,1}$ satisfy $u_{\lambda_n,1}\leq u_0+\varepsilon$. Recall that, by Corollary \ref{cor 4.1b}, there exists $M>0$ such that, for all $n$, $u_{\lambda_n,1}\geq -M$.
This implies that  the sequence $\{u_{\lambda_n,1}\}$  is bounded in $C(\overline\Omega)$.  We argue then as  in Step 5 to prove that
$u_{\lambda_n,1}\to u$ in $C^1_0(\overline\Omega)$ with $u$ solution of $(P_0)$. By uniqueness of the solution of $(P_0)$, we deduce that $u=u_0$.
\smallbreak

Now let us consider the sequence  $\{u_{\lambda_n,2}\}$. If $\{u_{\lambda_n,2}\}$ is bounded, then as in Step 5, we have that 
$u_{\lambda_n,2}\to u$ in $C^1_0(\overline\Omega)$ with $u$ solution of $(P_0)$. By Step 3 and the facts that $u_{\lambda_n,2}\not\in {\mathcal S}$, 
$u_{\lambda_n,2}\gg u_{\lambda_n,1}$ and $u_{\lambda_n,1}\to u_0$, we know that $\max\{u_{\lambda_n,2}-u_0\}>\varepsilon$. This implies that $u\not=u_0$ which contradicts the uniqueness of the solution of $(P_0)$.
\end{proof}

\begin{remark}
Observe that, by the above proof, we see that the set of $\lambda$ for which the problem $(P_{\lambda})$  has at least two solutions is open in $]0,+\infty[$.
\end{remark}

\begin{proof}[Proof of Theorem \ref{thm 1}]

We proceed in several steps. \medskip

\noindent{\it Step 1: Every non-negative upper solution of $(P_{\lambda})$ satisfies $u\gg u_0$.}
If $u$ is a non-negative upper solution of $(P_{\lambda})$ then $u$ is an upper solution of $(P_0)$. By Proposition \ref{comparison} we deduce that  $u\geq u_0$ and hence $u$ is not a solution of  $(P_0)$. As in Step 4 of the proof of Theorem \ref{thmlocal}, we prove that $u\gg u_0$.
\medbreak

\noindent{\it Step 2: The problem $(P_{\lambda})$ has no non-negative solution for $\lambda$ large.}
Let $\varphi_1 >0$ the first eigenfunction of \eqref{eigenvaluep}.
If $(P_{\lambda})$ has a non-negative solution, multiplying $(P_{\lambda})$ by $\varphi_1 >0$ and integrating we obtain
$$
\gamma_1 \int_{\Omega} cu\varphi_1 \, dx =-\int_{\Omega}\Delta u \varphi_1 \, dx = \lambda \int_{\Omega} cu\varphi_1 \, dx +\int_{\Omega} \mu |\nabla u|^2 \varphi_1 \, dx + \int_{\Omega} h\varphi_1 \, dx,
$$
and hence, for $\lambda>\gamma_1$, as $u\geq u_0$, we have
$$
\begin{array}{rcl}
0&\geq&
\displaystyle
(\lambda-\gamma_1) \int_{\Omega} cu\varphi_1 \, dx +\int_{\Omega} \mu |\nabla u|^2 \varphi_1 \, dx + \int_{\Omega} h\varphi_1 \, dx
\\[3mm]
&\geq&
\displaystyle
(\lambda-\gamma_1) \int_{\Omega} cu_0\varphi_1 \, dx +\int_{\Omega} \mu |\nabla u|^2 \varphi_1 \, dx + \int_{\Omega} h\varphi_1 \, dx
\end{array}
$$
which gives a contradiction for $\lambda$ large enough.
\medbreak

\noindent{\it Step 3: Define $\overline \lambda=\sup\{\lambda \mid (P_{\lambda}) \mbox{ has a solution } u_{\lambda}\geq0\}$, then, $\overline \lambda<+\infty$ and, 
for all $\lambda>\overline\lambda$, $(P_{\lambda})$ has no non-negative solution.}
This is obvious by definition of $\overline \lambda$ and Step 2.
\medbreak

\noindent{\it Step 4: For all $0<\lambda<\overline\lambda$, $(P_{\lambda})$ has well ordered strict lower and upper solutions.}
Observe that $u_0$ is a lower solution of $(P_{\lambda})$ which is not a solution.
By definition of $\overline\lambda$, we can find $\tilde \lambda \in \,]\lambda, \overline\lambda[$ and a non-negative solution $u_{\tilde\lambda}$ of $(P_{\tilde \lambda})$. Then  $u_{\tilde\lambda}$ is an upper solution of $(P_{\lambda})$ which is not a solution and satisfies $u_{\tilde\lambda}\geq u_0$ by Step 1.  At this point following the arguments of Step 4 of the proof of Theorem \ref{thmlocal}, we prove that $u_0$ and $u_{\tilde\lambda}$ are strict lower and upper solutions of $(P_{\lambda})$.
%
%
%
\medbreak

\noindent{\it Step 5:  For all $\lambda \in \, ]0, \overline\lambda [$, $(P_{\lambda})$ has at least two positive solutions with $u_0\ll u_{\lambda,1}\ll u_{\lambda,2}$.}
By Step 4, Theorem \ref{sousDe} and Remark \ref{rem-utile}, we have $R_0>0$ such that $\deg(I-{\mathcal M}_{\lambda}, {\mathcal S}) = 1$ 
with
$$
{\mathcal S} =
\{u \in C^1_0(\overline\Omega) \mid u_0 \ll u \ll u_{\tilde\lambda}, \,\, \|u\|_{C^1}<R_0 \}.
$$
and we have  the existence of a first solution $u_{\lambda,1}$ of $(P_{\lambda})$  with $u_0\leq u_{\lambda,1}\leq u_{\tilde\lambda}$. Let us choose $u_{\lambda,1}$ as the minimal solution between  $u_0$ and $u_{\tilde\lambda}$.

Now, using Lemma \ref{non ex}, there exists $A_1>0$ large enough such that \eqref{EE1a} has no solution for $a\geq A_1$.
By Theorem \ref{bounds1} and Lemma \ref{lem 2} there exists  $R_1>R_0 >0$ such that, for  any $a\in [0,A_1]$, 
every solution of \eqref{EE1a} with $u\geq u_0$ satisfies $\|u\|_{C^1}<R_1$. Hence, by homotopy invariance of the degree we have
$$
\deg(I-{\mathcal M}_{\lambda}, {\mathcal D}) = \deg(I-{\mathcal M}_{\lambda}-{\mathcal L}^{-1}(A_1c), {\mathcal D}),
$$
where
$$
{\mathcal D}=\{u \in C^1_0(\overline\Omega) \mid u_0 \ll u, \,\, \|u\|_{C^1}<R_1 \}.
$$
As for $a=A_1$, \eqref{EE1a} has no solution, 
$
\deg(I-{\mathcal M}_{\lambda}-{\mathcal L}^{-1}(A_1c),{\mathcal D})=0$
and we obtain
$$
\deg(I-{\mathcal M}_{\lambda},{\mathcal D} \setminus {\mathcal S})=\deg(I-{\mathcal M}_{\lambda},{\mathcal D})-\deg(I-{\mathcal M}_{\lambda},{\mathcal S})= 0-1=-1.
$$
This proves the existence of a second solution $u_{\lambda,2}$ of $(P_{\lambda})$ with $u_{\lambda,2}\gg u_0$.
As $u_{\lambda,1}$ is the minimal solution  between $u_0$ and $u_{\tilde\lambda}$,  we have $u_{\lambda,1}\lneqq u_{\lambda,2}$ as otherwise, 
by Theorem \ref{sousDe}, we have a solution $u$ with $u_0\leq u \leq\min\{u_{\lambda,1}, u_{\lambda,2}, u_{\tilde\lambda}\}$ which contradicts the minimality of 
$u_{\lambda,1}$. We proceed as in  Step 4 of the proof of Theorem \ref{thmlocal} to conclude that $u_{\lambda,1}\ll u_{\lambda,2}$.
\medbreak

\noindent{\it Step 6: For $\lambda_1<\lambda_2$, we have $u_{\lambda_1,1}\ll u_{\lambda_2,1}$.}
As $u_{\lambda,1}$ is the minimal solution  above $u_0$ and,  as in Step 4, $u_{\lambda_2,1}$ is a strict upper solution of $(P_{\lambda_1})$  with 
$u_{\lambda_2,1}\geq u_0$, we deduce that
$u_{\lambda_1,1}\ll u_{\lambda_2,1}$.
\medbreak

\noindent{\it Step 7: The problem $(P_{\overline\lambda})$ has at least one solution.}
Let $\{\lambda_n\} \subset \, ]0, \overline\lambda[$ be a sequence such that $ \lambda_n \to  \overline\lambda$ 
and $\{u_n \} \subset W^{2,p}(\Omega)$ be a sequence of corresponding  non negative solutions. 
We argue as in Step 5 of the proof of Theorem \ref{thmlocal} to obtain that, up to a subsequence, $u_n\to u$ in $C^1_0(\overline\Omega)$ with $u \in W^{2,p}(\Omega)$ 
solution of $(P_{\overline\lambda})$.
\medbreak

\noindent{\it Step 8: Uniqueness of the non-negative solution of $(P_{\overline\lambda})$.}
The proof follows the lines of Step 6 of the proof of Theorem \ref{thmlocal}.
\medbreak

\noindent{\it Step 9: Behaviour of the solutions for $\lambda\to0$.}
This can be proved as in Step 7 of the proof of Theorem \ref{thmlocal}.
%
\end{proof}


\begin{prop}
\label{unique}
Under assumption $(A)$, assume that $(P_{0})$ has a solution $u_0\leq0$ with $c u_0 \lneqq0.$  Then, for all $\lambda \geq 0$,
 problem $(P_{\lambda})$ has at most one  solution $u\leq 0$.
\end{prop}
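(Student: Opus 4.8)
The plan is to pin every non-positive solution of $(P_\lambda)$ down to one fixed order interval, to reduce the statement to a uniqueness result for a $(P_0)$-type problem, and then to use the hypothesis $cu_0\lneqq0$ to exclude a second such solution.

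Since $(P_0)$ has at most one solution (Theorem~\ref{ADJT1}) we may assume $\lambda>0$. Let $u$ be any non-positive solution of $(P_\lambda)$. Because $c\geq0$ and $u\leq0$ we have $\lambda cu\leq0$ a.e., so $-\Delta u=\lambda cu+\mu|\nabla u|^2+h\leq\mu|\nabla u|^2+h$, i.e.\ $u$ is a regular lower solution of $(P_0)$; by the same computation $u_0$, which solves $-\Delta u_0=\mu|\nabla u_0|^2+h$ and has $\lambda cu_0\leq0$, is an upper solution of $(P_\lambda)$. Proposition~\ref{comparison} then gives $u\leq u_0$, while Lemma~\ref{lem lower} furnishes a strict lower solution $v_\lambda$ of $(P_\lambda)$ lying below every upper solution of $(P_\lambda)$, so $v_\lambda\leq u_0$ and $v_\lambda\leq u$. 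As $cu_0\lneqq0$ and $\lambda>0$, $u_0$ is not a solution of $(P_\lambda)$ (Remark~\ref{rem 0}); thus $w:=u_0-u\gneqq0$ satisfies $-\Delta w-\mu\nabla(u_0+u)\cdot\nabla w=-\lambda cu\geq0$ in $\Omega$ and $w=0$ on $\partial\Omega$, and the strong maximum principle forces $u\ll u_0$, and likewise $v_\lambda\ll u$. Hence every non-positive solution of $(P_\lambda)$ lies in the order interval $[v_\lambda,u_0]$.

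Any non-positive solution $u$ of $(P_\lambda)$ also solves the $(P_0)$-type problem $-\Delta w=\mu(x)|\nabla w|^2+H_u(x)$ with $H_u:=\lambda cu+h\in L^p(\Omega)$ and $H_u\leq h$; by the uniqueness for problems of this form (\cite{ArDeJeTa2}, the tool already used in Proposition~\ref{comparison} and Theorem~\ref{regularity}), $u$ is the only solution of that problem. Therefore, for two non-positive solutions $u_1,u_2$ of $(P_\lambda)$ it is enough to prove $cu_1=cu_2$ a.e., i.e.\ $H_{u_1}=H_{u_2}$: uniqueness of the $(P_0)$-type problem then yields $u_1=u_2$. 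Moreover, the same comparison shows that the map $\Psi$ assigning to $w\in[v_\lambda,u_0]$ the unique solution of $-\Delta W=\mu|\nabla W|^2+\lambda cw+h$ is order-preserving, satisfies $\Psi(v_\lambda)\geq v_\lambda$ and $\Psi(u_0)\leq u_0$ (since $v_\lambda$, resp.\ $u_0$, is a lower, resp.\ upper, solution of the corresponding $(P_0)$-type problem), hence maps $[v_\lambda,u_0]$ into itself; its fixed points there are exactly the non-positive solutions of $(P_\lambda)$, and the monotone iterates $\Psi^n(v_\lambda)$ increase to a minimal non-positive solution $\underline u$, with $v_\lambda\ll\underline u\ll u_0$.

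The remaining step — that $\underline u$ is the only non-positive solution — is the main obstacle: the zeroth order term $\lambda c(x)w$ has the wrong monotonicity for a comparison principle (this is precisely why $(P_\lambda)$ has several solutions in general), so $\Psi$ need not have a unique fixed point, and the sign hypotheses must genuinely be exploited. Along $[v_\lambda,u_0]$ one has $\lambda cw\leq\lambda cu_0\leq0$, and $cu_0\lneqq0$ is what makes $u_0$ a \emph{strict} upper solution of $(P_\lambda)$ (the computation above). Given a second non-positive solution $u_2\gneqq\underline u$, I would form the strict upper solution $\beta=\tfrac12(\underline u+u_2)$ (convexity of $y\mapsto|y|^2$, exactly as in Step~4 of the proof of Theorem~\ref{thmlocal}), with $v_\lambda\ll\beta\ll u_0$ and $\beta\leq0$, and then try to derive from it a strict upper solution that dips strictly below $\underline u$ somewhere — via Theorem~\ref{sousDe} this would contradict the minimality of $\underline u$ among non-positive solutions. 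Carrying this out (for instance by the Hopf--Cole-type substitution $W=\mu_1^{-1}(e^{\mu_1(u-u_0)}-1)$, which absorbs the quadratic gradient term and leaves, on the range imposed by $u\leq u_0\leq0$, a semilinear inequality with a nonincreasing nonlinearity amenable to the comparison principle of \cite{ArDeJeTa2}) is the technical heart of the argument. Once uniqueness of the non-positive solution is obtained for every $\lambda>0$, the statement for all $\lambda\geq0$ follows, using the case $\lambda=0$ of Theorem~\ref{ADJT1}.
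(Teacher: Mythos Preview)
Your first paragraph is correct and corresponds to Steps 1 and 2 of the paper's proof: every non-positive solution $u$ of $(P_\lambda)$ satisfies $u\ll u_0$, and one may reduce to the case of two \emph{ordered} non-positive solutions $u_1\lneqq u_2$. Your second paragraph (the fixed-point map $\Psi$ and the minimal solution $\underline u$) is also correct, but it is a detour that does not bring you closer to uniqueness.

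The gap is exactly where you say it is: the third paragraph is not a proof. You write ``try to derive from it a strict upper solution that dips strictly below $\underline u$'' and ``carrying this out \dots\ is the technical heart of the argument'', but you never carry it out. The averaged upper solution $\beta=\tfrac12(\underline u+u_2)$ satisfies $\beta\geq\underline u$, so it cannot, by itself, produce anything below $\underline u$; your Hopf--Cole suggestion is too vague to constitute an argument, and there is no reason to expect a monotone nonlinearity to appear after that substitution when the original zeroth-order term has the wrong sign.

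The paper's device is different and is precisely the missing idea. Given ordered non-positive solutions $u_1\lneqq u_2$, one first observes (from your own Step 1) that $u_2\ll u_0\leq0$, hence $|u_2|\gg0$. Set
\[
\bar\varepsilon=\min\{\varepsilon>0:\ u_2-u_1\le\varepsilon\,|u_2|\}\in(0,\infty),
\qquad
w_{\bar\varepsilon}=\frac{(1+\bar\varepsilon)u_2-u_1}{\bar\varepsilon}.
\]
Writing $u_2=\frac{\bar\varepsilon}{1+\bar\varepsilon}w_{\bar\varepsilon}+\frac{1}{1+\bar\varepsilon}u_1$ and using convexity of $|\cdot|^2$, one checks that $w_{\bar\varepsilon}$ is a \emph{lower} solution of $(P_\lambda)$; by the choice of $\bar\varepsilon$ it satisfies $w_{\bar\varepsilon}\leq0$. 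Your Step 1 (which applies to lower solutions, not only solutions) then gives $w_{\bar\varepsilon}\ll u_0\leq0$, hence $w_{\bar\varepsilon}\ll0$. But $\varepsilon\mapsto w_\varepsilon$ is continuous in $C^1_0(\overline\Omega)$ and $\{w\ll0\}$ is open there, so $w_\varepsilon\leq0$ also for $\varepsilon$ slightly smaller than $\bar\varepsilon$, contradicting the minimality of $\bar\varepsilon$. This sliding/extrapolation trick is the ``technical heart'' you were looking for; without it (or an equivalent), the proof is incomplete.
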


\begin{proof}

\noindent The proof is divided in three steps.
\smallbreak

\noindent{\it Step 1: If $u$ is a lower solution of $(P_{\lambda})$ with $u\leq 0$, then $u\ll u_0$.}
In fact, $u$ is a lower solution of $(P_{0})$ and, by Proposition \ref{comparison}, we have $u\leq u_0$. In addition  for $w=u_0-u$, as $c u \leq cu_0 \lneqq0$  we have
$$
\begin{array}{cl}
-\Delta w - \mu(x)\langle \nabla u + \nabla u_0\mid \nabla w\rangle= - \lambda c(x) u\gneqq0,&\mbox{in }\Omega,
\\
w=0,&\mbox{on }\partial\Omega.
\end{array}
$$
This implies that $w\gg0$ i.e. $u\ll u_0\leq0$. 
\medskip

\noindent
{\it Step 2: If we have  two solutions $u_1\leq 0$ and $u_2\leq 0$ of $(P_{\lambda})$ then we have two ordered solutions  $\tilde u_1 \lneqq \tilde u_2\leq u_0$.}
By Step 1, we have  $u_1\ll u_0$ and $u_2\ll u_0$. In case $u_1$ and $u_2$ are not ordered, as $u_0$ is an upper solution of $(P_{\lambda})$,
applying Theorem \ref{sousDe}, 
there exists  a solution $u_3$ of $(P_{\lambda})$ with $\max\{u_1,u_2\}\leq u_3 \leq u_0$.  This proves the step by choosing $\tilde u_1 =u_1$ and $\tilde u_2=u_3$. 
\medskip

\noindent
{\it Step 3: Conclusion.}
Let us assume by contradiction that we have two solutions $u_1\leq 0$ and $u_2\leq 0$. By Step 2, we can suppose  $u_1 \lneqq u_2$. As $|u_2|\gg 0$, the set $\{v\in C^1_0(\overline\Omega)\mid v\leq  |u_2|\}$ is an open neighborhood of $0$ and hence the set  $\{\varepsilon>0\mid u_2-u_1\leq \varepsilon |u_2|\}$ is not empty. Then defining
 $$
\bar\varepsilon=\inf\{\varepsilon>0\mid u_2-u_1\leq \varepsilon |u_2|\}
$$
 we have that $0<\bar\varepsilon <\infty$ and 
\begin{equation}
\label{eq min}
\bar\varepsilon=\min\{\varepsilon>0\mid u_2-u_1\leq \varepsilon |u_2|\}.
\end{equation}
Letting
 $$
w_{\bar\varepsilon}=\frac{(1+\bar\varepsilon)u_2-u_1}{\bar\varepsilon}
$$
 we can write
$$
\nabla u_2 =(\frac{\bar\varepsilon}{1+\bar\varepsilon}) \nabla w_{\bar\varepsilon} + (\frac{1}{1+\bar\varepsilon})\nabla u_1,
$$
and by  convexity 
$$
|\nabla u_2|^2 \leq (\frac{\bar\varepsilon}{1+\bar\varepsilon})| \nabla w_{\bar\varepsilon}|^2+ (\frac{1}{1+\bar\varepsilon})|\nabla u_1|^2.
$$
We then obtain
$$
-\Delta w_{\bar\varepsilon}\leq \lambda c(x) w_{\bar\varepsilon} + \mu(x) |\nabla w_{\bar\varepsilon}|^2+h(x).
$$
By the choice of ${\bar\varepsilon}>0$, $w_{\bar\varepsilon}\leq 0$ and, by Step 1,  $w_{\bar\varepsilon}\ll u_0\leq 0$. 
At this point, we have a contradiction with the definition of $\overline\varepsilon$ given in \eqref{eq min}.
\end{proof}

Our next result can be viewed as a generalization of \cite[Theorem 3.12]{AbPePr}.

\begin{cor}
\label{improve}
Under assumption $(A)$, assume that $h\lneqq 0$. Then, for all $\lambda>0$,
 the problem $(P_{\lambda})$ has exactly one  solution $u\leq 0$.
\end{cor}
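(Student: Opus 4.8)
The plan is to deduce the corollary from Proposition~\ref{unique}, which already yields uniqueness among non-positive solutions, together with a sub- and supersolution argument for existence. So the first task is to check that, when $h\lneqq0$, the hypotheses of Proposition~\ref{unique} hold, i.e.\ that $(P_0)$ has a solution $u_0\le0$ with $cu_0\lneqq0$. Since $h\lneqq0$ we have $h^+\equiv0$, so condition \eqref{cond ArDeJeTa} is satisfied trivially (the infimum there equals $1$); hence, by the Remark following Theorem~\ref{ADJT1}, $(P_0)$ admits a solution $u_0$. As $h\le0$, the constant function $0$ is an upper solution of $(P_0)$, so Proposition~\ref{comparison} forces $u_0\le0$, and $u_0\not\equiv0$ (otherwise $h\equiv0$). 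Rewriting the equation for $u_0$ as $-\Delta u_0-\mu(x)\,\nabla u_0\cdot\nabla u_0=h\lneqq0$, with the first-order coefficient $\mu\nabla u_0\in L^{\infty}(\Omega)$ since $u_0\in C^1_0(\overline\Omega)$ by Theorem~\ref{regularity}, the strong maximum principle gives $u_0<0$ in $\Omega$; combined with $c\gneqq0$ this yields $cu_0\lneqq0$. Therefore Proposition~\ref{unique} applies and, for every $\lambda\ge0$, $(P_\lambda)$ has \emph{at most} one solution $u\le0$.

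It remains to produce, for each $\lambda>0$, a solution $u\le0$, and for this I would exhibit a pair of ordered lower and upper solutions. The constant $\beta\equiv0$ is an upper solution of $(P_\lambda)$, because $-\Delta\beta=0\ge\lambda c(x)\cdot0+\mu(x)|\nabla\beta|^2+h(x)=h(x)$. Let $v_\lambda$ be the strict lower solution of $(P_\lambda)$ furnished by Lemma~\ref{lem lower}; applying the last assertion of that lemma to the upper solution $0$ gives $v_\lambda\le0=\beta$. On the order interval $[v_\lambda,0]$ one has, for all $\xi\in\R^N$,
\[
\bigl|\lambda c(x)\,u+\mu(x)|\xi|^{2}+h(x)\bigr|\le \lambda\|v_\lambda\|_{\infty}\,|c(x)|+|h(x)|+\mu_2|\xi|^{2},
\]
which is of the form \eqref{Hf} with a function in $L^p(\Omega)$ and $K:=\mu_2$. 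Hence Theorem~\ref{sousDe} provides a solution $u$ of $(P_\lambda)$ with $v_\lambda\le u\le0$; in particular $u\le0$. (Alternatively, the existence of a non-positive solution is already contained in Corollary~\ref{Cor Negatif}.)

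Combining the two steps proves the statement: for every $\lambda>0$, $(P_\lambda)$ has at least one solution $u\le0$ and at most one, hence exactly one. The only point demanding a little care is the verification of $cu_0\lneqq0$: there one must rewrite the quasilinear equation for $u_0$ as a linear equation in non-divergence form with bounded drift and no zeroth-order term, so that the strong maximum principle applies and gives strict negativity of $u_0$ in the interior of $\Omega$; once this is in place, everything else is a direct appeal to Proposition~\ref{unique}, Proposition~\ref{comparison}, Lemma~\ref{lem lower} and Theorem~\ref{sousDe}.
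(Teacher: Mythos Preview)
Your proof is correct and follows essentially the same approach as the paper: both use $\beta\equiv0$ as upper solution together with Lemma~\ref{lem lower} and Theorem~\ref{sousDe} for existence of a non-positive solution, then rewrite the equation for $u_0$ as a linear equation with bounded drift to apply the strong maximum principle and obtain $cu_0\lneqq0$, and finally invoke Proposition~\ref{unique} for uniqueness. The only organizational difference is that the paper treats $\lambda=0$ and $\lambda>0$ in a single sub-/supersolution step (thereby producing $u_0\le0$ directly), whereas you first obtain $u_0$ via condition~\eqref{cond ArDeJeTa} and then deduce $u_0\le0$ from Proposition~\ref{comparison}; this is slightly longer but equally valid.
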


\begin{proof}
Clearly  $u \equiv 0$ is an upper solution of $(P_\lambda)$ for all $\lambda\geq 0$.
By Lemma \ref{lem lower}, for all $\lambda\geq 0$, $(P_\lambda)$ 
has a lower solution $\alpha_{\lambda}\leq 0$. From Theorem  \ref{sousDe} it follows that $(P_\lambda)$
has a solution $u_\lambda$ with $\alpha_{\lambda}\leq u_{\lambda}\leq 0$. Now, as $u_0$ satisfies 
$$
- \Delta u_0 = \mu(x) |\nabla u_0|^2 + h(x),
$$
the strong maximum principle and $h\lneqq 0$, implies that  $u_0\ll0$ and in particular $cu_0 \lneqq 0$. We now conclude with Proposition 
\ref{unique}.
\end{proof}

\begin{proof}[Proof of Theorem \ref{thm 2}]

We proceed in several steps. \medskip

\noindent{\it Step 1: For all $\lambda>0$, $u_0$ is a strict upper solution of $(P_{\lambda})$.}
Clearly $u_0$ is an upper solution of $(P_{\lambda})$ which is not a solution. To prove that it is a strict upper solution, we argue 
as in Step 4 of the proof of Theorem  \ref{thmlocal}.
\medbreak

\noindent{\it Step 2: For all $\lambda>0$, $(P_{\lambda})$ has a strict lower  solution $\alpha$ with $\alpha\leq \beta$ for all  upper solution $\beta$ of $(P_{\lambda})$.}
This is Lemma \ref{lem lower}.
\medbreak

\noindent{\it Step 3: For all $\lambda>0$, $(P_{\lambda})$ has at least two   solutions with }
$$
u_{\lambda,1}\ll u_0, \quad u_{\lambda,1}\ll u_{\lambda,2} \quad \mbox{ and }\quad \max u_{\lambda,2}>0.
$$
By Steps 1, 2 and  Theorem \ref{sousDe}, there exists a $R>0$ such  that $\deg(I-{\mathcal M}_{\lambda}, {\mathcal S}) = 1$ 
with
$$
{\mathcal S} =
\{u \in C^1_0(\overline\Omega) \mid \alpha \ll u \ll u_0,  \|u\|_{C^1} <R\}.
$$
In particular the existence of a first solution  $u_{\lambda,1}\ll u_0$ is proved.
\medbreak

The proof of the existence of a second solution $u_{\lambda,2}$ with $u_{\lambda,1}\ll u_{\lambda,2}$  is derived exactly as in 
Step 3 and 4 of the proof of Theorem \ref{thmlocal}.
%
%
By Proposition \ref{unique}, we have $\max u_{\lambda,2}>0$.
\medbreak


\noindent{\it Step 4: If $\lambda_1< \lambda_2$, then $u_{\lambda_1,1}\gg u_{\lambda_2,1}$.}
As $u_{\lambda_1,1}$ is a strict upper solution of $(P_{\lambda_2})$ and $u_{\lambda_2,1}$ is the minimal solution of $(P_{\lambda_2})$, 
we have $u_{\lambda_1,1}\gg u_{\lambda_2,1}$.
\medbreak

\noindent{\it Step 5: Behaviour of the solutions for $\lambda\to0$.}
This can be proved as  in Step 7 of the proof of Theorem  \ref{thmlocal}.
%
\end{proof}


\begin{proof}[Proof of Corollary  \ref{Cor Negatif}]
By the proof of Corollary \ref{improve}, as $h\lneqq 0$, we have the existence of a solution $u_0$ of  $(P_0)$ with $u_0\ll 0$ and hence the result follows by Theorem \ref{thm 2}.
\end{proof}


\begin{proof}[Proof of Theorem \ref{thm 3}]
First observe that if $(P_{\lambda})$ has an upper solution $\beta_{\lambda}\leq0$, then  $\beta_{\lambda}$ satisfies also $c \beta_{\lambda}\lneqq 0$ as otherwise, it is also an upper solution of $(P_0)$, which contradicts the assumption (a) by Lemma  \ref{lem lower} and Theorem \ref{sousDe}.

Let us define 
$$
\underline\lambda=\inf\{\lambda \geq 0 \mid (P_{\lambda}) \mbox{ has an upper solution } \beta_{\lambda}\leq0 \mbox{ with } c \beta_{\lambda}\lneqq 0 \}.
$$

Let $\lambda>\underline\lambda$. By definition of $\underline\lambda$, there exists $\tilde\lambda\in\,]\underline\lambda,\lambda[$ such that 
$(P_{\tilde\lambda})$  has an upper solution $\beta_{\tilde\lambda}\leq0$ with $c \beta_{\tilde\lambda}\lneqq 0 $. 
Clearly $\beta_{\tilde\lambda}$ is an upper solution of $(P_{\lambda})$ which is not a solution and hence, as in Step 4 of the proof of Theorem  \ref{thmlocal}, 
$\beta$ is a strict upper solution of $(P_{\lambda})$. 

By Lemma \ref{lem lower}, $(P_{\lambda})$ has a strict lower  solution $\alpha\leq \beta_{\tilde\lambda}$ and $\alpha\leq u$ for all solution $u$ of $(P_{\lambda})$.
Using Theorem \ref{sousDe} there exists $R>0$ such that  $\deg(I-{\mathcal M}_{\lambda}, {\mathcal S}) = 1$ with
$$
{\mathcal S} =
\{u \in C^1_0(\overline\Omega) \mid \alpha \ll u \ll \beta_{\tilde\lambda}, \|u\|_{C^1} <R \}.
$$
In particular the existence of a first solution $u_{\lambda,1}\ll 0$ follows. 
\medbreak

To obtain a second solution $u_{\lambda, 2}$ satisfying $u_{\lambda,1} \ll u_{\lambda,2}$ we now just repeat the arguments of Steps 3 and 4 of the proof of 
Theorem \ref{thmlocal}.  
\medskip

Again, following the arguments of Step 4 of the proof of Theorem \ref{thm 2}, we prove that if $\lambda_1< \lambda_2$, then $u_{\lambda_1,1}\gg u_{\lambda_2,1}$.
\medbreak

To show that $(P_{\underline\lambda})$ has at least one solution with $u \leq 0$, let $\{\lambda_n\} \subset \, ]\underline\lambda, +\infty[$ be a decreasing sequence such that 
$\lambda_n \to  \underline\lambda$ 
and $\{u_n \} \subset W^{2,p}(\Omega)$ be a sequence of corresponding  solutions with $u_{n}\leq u_{n+1}\leq 0$. 
As $\{u_n\}$ is increasing and bounded above, there exists  $M>0$ such that, for all $n \in \mathbb N$, $\|u_n\|_{\infty}<M$ and hence, 
arguing as in Step 5 of the proof of 
Theorem \ref{thmlocal}, we prove that $(P_{\underline\lambda})$ has at least one solution with $u \leq 0$.
\medbreak

By assumption (a), we have that   ${\underline\lambda}>0$ as we just proved that $(P_{\underline\lambda})$ has at least one solution with $u \leq 0$.
The proof of the uniqueness of the non-positive solution of $(P_{\underline\lambda})$ follows then 
as in Step 6 of the proof of Theorem \ref{thmlocal}. Finally (iii) follows by definition of  
${\underline\lambda}>0$ and the first part of the proof.
\end{proof}

\begin{proof}[Proof of Theorem \ref{thm 4}]
Let $\lambda>\nu_1$. We proceed in several steps.
\medskip

\noindent{\it Step 1: For $k>0$ small, $(Q_{\lambda,k})$ admits a solution.}
In view of Lemma \ref{lem lower} and of Theorem  \ref{sousDe} it suffices to show that $(Q_{\lambda,k})$ admits an upper solution.

Let $\varepsilon_0 >0$ be given by Proposition \ref{anti} corresponding to $\bar c = c$, $\bar d = \mu_2 \tilde h^-$ and $\bar h = \mu_2 \tilde h^+$ 
and choose $\lambda_0 \in \,]\nu_1, \min(\nu_1+\varepsilon_0, \nu_1 + \frac{\lambda - \nu_1}{2})]$. Then let $w\ll0$  be the solution of
$$
\begin{array}{cl}
-\Delta u+\mu_2 \tilde h^- u = \lambda_0 c u+ \mu_2 \tilde h^+, &\mbox{ in }\Omega,
\\
u=0,  &\mbox{ on }\partial \Omega.
\end{array}
$$

Also taking $\delta >0$ small enough we have that
$$
\lambda_0 s \geq (1+\lambda s)\ln(1+\lambda s)
$$
for all $s \in [- \delta, 0]$. Thus defining  
$\tilde\beta_k=\frac{k}{\lambda} w$ for $k>0$ small enough, it follows  that $\tilde\beta_k \in [-\delta, 0]$ and 
$$
\begin{array}{cl}
-\Delta \tilde\beta_k+\mu_2 \tilde h^- \tilde\beta_k  
\geq  
c (1+\lambda \tilde\beta_k)\ln(1+\lambda \tilde\beta_k) + k\frac{\mu_2}{\lambda} \tilde h^+, &\mbox{ in }\Omega,
\\
\tilde\beta_k=0,  &\mbox{ on }\partial \Omega.
\end{array}
$$
At this point defining  $\beta_k=\frac{1}{\mu_2}\ln(\lambda \tilde\beta_k +1)$ we see, after some standard calculations, 
that $\beta_k\ll0$ is an upper solution for $(Q_{\lambda,k})$.
\medbreak

\noindent{\it Step 2: For $k$ large, the problem $(Q_{\lambda,k})$ has no solution.}
Let $\phi\in C^{\infty}_0(\Omega)$ such that $\phi^2\gg0$. Then, using $\phi^2$ as test function we obtain,
by Lemma \ref{lem 4.1},
$$
\begin{array}{rcl}
\displaystyle
\int_{\Omega} \frac{1}{\mu(x)}|\nabla \phi|^2 \,dx
&\geq &
\displaystyle
2\int_{\Omega} \phi \langle \nabla u, \nabla \phi\rangle \,dx - \int_{\Omega} \mu(x) |\nabla u|^2 \phi^2 \,dx
\\[3mm]
&=&
\displaystyle
\lambda \int_{\Omega} c u \phi^2  \,dx + k \int_{\Omega} \tilde h^+\phi^2 \,dx -\int_{\Omega} \tilde h^- \phi^2 \,dx
\\[3mm]
&\geq &
\displaystyle
-\lambda M \int_{\Omega} c  \phi^2 \,dx + k\int_{\Omega} \tilde h^+\phi^2 \,dx -\int_{\Omega} \tilde h^- \phi^2 \,dx
\end{array}
$$
which is a contradiction for $k>0$ large enough.
\medbreak

\noindent{\it Step 3: Define 
$$
\overline k=\sup\{ k\in \,]0,+\infty[ \, \mid \mbox{the problem } (Q_{\lambda,k})\mbox{ has  at least one solution}\},
$$ 
then $\overline k\in \, ]0,+\infty[$ and for $k\in\, ]0,\overline k[$, the problem $(Q_{\lambda,k})$ has a strict upper solution.}

By Step 1 and 2 we have  easily $\overline k\in \,]0,+\infty[$.

Let $k\in\,]0,\overline k[$ and $\tilde k\in \,]k,\overline k[$ be such that $(Q_{\lambda,\tilde k})$ has a solution $\tilde \beta$. Then $\beta=\frac{k}{\tilde k} \tilde \beta$ is an upper solution of $(Q_{\lambda, k})$ as 
$$
\begin{array}{cl}
\begin{array}[b]{rcll}
- \Delta \beta &=&  \lambda c(x)\beta+\frac{\tilde k}{k} \mu(x) |\nabla \beta|^2 + k\tilde h^+(x)-\frac{k}{\tilde k} \tilde h^-(x)
\\[3mm]
&\geq& \lambda c(x)\beta+ \mu(x) |\nabla \beta|^2 + k\tilde h^+(x)-\tilde h^-(x), 
\end{array}&\mbox{in }\Omega,
\\[3mm]
\beta\geq0,&\mbox{on } \partial\Omega,
\end{array}
$$
i.e. $\beta$ is an upper solution of $(Q_{\lambda,k})$. Now, as in Step 4 of the proof of Theorem \ref{thm 1} we can prove that $\beta$ is a strict upper solution of $(Q_{\lambda,k})$. 
\medbreak

\noindent{\it Step 4: Conclusion. }
At this point the proof follows as in the proof of Theorems \ref{thm 1} or \ref{thm 2}.  This is possible in view of Step 2 and of Theorem \ref{bounds1}. 
\end{proof}

\begin{proof}[Proof of  Corollary \ref{surprise}] 
First observe that, by \cite[Lemma 6.1]{ArDeJeTa} (see also the proof of Corollary \ref{Cor 4.1} above), we know that  $(P_{\gamma_1})$ has no solution. Hence also, for all $\lambda>0$,  $(P_{\lambda})$ has no solution with $cu\equiv 0$ as otherwise $u$ is solution for every $\lambda\in \mathbb R$ which contradicts the non existence of a solution for $\lambda=\gamma_1$.
\medbreak

By Step 3 of the proof of Theorem \ref{thm 4}, there exists $\tilde k >0$ such that, for all $k\in \,]0,\tilde k]$,  
the problem $(P_{\tilde \lambda})$ has a strict upper solution $\beta_0$ with $\beta_0\ll0$. The existence of $\lambda_2>\gamma_1$ as in (iii) can then be deduced from Theorem \ref{thm 3}.
\medbreak

By \cite[Theorem 1.1]{ArDeJeTa}, decreasing $\tilde k$ if necessary, we know that for all $k\in \,]0,\tilde k]$, the problem $(P_0)$ has a solution $u_0\gg0$. 
Hence the existence of $\lambda_1$ as in (i) can be deduced from  Theorem \ref{thm 1}. 
\end{proof}

\begin{proof}[Proof of  Theorem \ref{cas h0}]
First observe that, for all $\lambda\in \mathbb R$, $u\equiv 0$ is solution of \eqref{eq 0}.

\noindent{\it Step 1: for all $\lambda\in\, ]0, \gamma_1[$, the problem \eqref{eq 0}
has a second solution $u_{\lambda,2}\gneqq 0$. }
Let us prove that the problem \eqref{eq 0} has a strict upper solution $\beta\gg0$. 
To this end, let $\lambda<\gamma_1$ and $\varepsilon>0$ such that, 
for all $v\in [0,\varepsilon]$, $\lambda \frac{(1+\mu_2 v)\ln(1+\mu_2 v)}{\mu_2}\leq \gamma_1 v$.
Consider then the function $\tilde\beta=\varepsilon\varphi_1$ where $\varphi_1$ denotes the first eigenfunction of \eqref{eigenvaluep} with $\|\varphi_1\|_{\infty}=1$ and observe that
$$
\begin{array}{cl}
\displaystyle
-\Delta \tilde\beta \gneqq  \lambda c(x)\frac{(1+\mu_2\tilde\beta)\ln(1+\mu_2\tilde\beta)}{\mu_2}, &\mbox{a.e. in }\Omega,
\\
\tilde\beta =0,  &\mbox{on }\partial \Omega.
\end{array}
$$
Hence for $\beta$ being defined by 
$\beta= \frac{\ln(\mu_2\tilde\beta+1)}{\mu_2}$, we have
$$
\begin{array}{cl}
\displaystyle
    -\Delta \beta \gneqq  \lambda c(x)\beta + \mu_2 |\nabla \beta|^2\geq  \lambda c(x)\beta + \mu(x) |\nabla \beta|^2, &\mbox{a.e. in }\Omega,
\\
\beta =0,  &\mbox{on }\partial \Omega.
\end{array}
$$
This implies, as in Step 4 of the proof of  Theorem \ref{thmlocal}, that $\beta\gg0$ is a strict upper solution of \eqref{eq 0}.

By \cite[Lemma 6.1]{ArDeJeTa}, we know that, every solution $u$ of \eqref{eq 0} satisfies $u\geq 0$ and by Lemma \ref{lem lower}, the problem  \eqref{eq 0} has a strict lower solution $\alpha\lneqq 0$. Hence we conclude the proof of (i) following the same lines as in the proof of Theorem \ref{thm 1}, 
the solution $u_{\lambda,1}$ being $u\equiv 0$. 
\medbreak

\noindent{\it Step 2: For  $\lambda=\gamma_1$ the problem \eqref{eq 0}  has only the trivial solution. }
This can be proved as in Corollary \ref{Cor 4.1}.
\medbreak

\noindent{\it Step 3: For $\lambda>\gamma_1$, the problem \eqref{eq 0} has a second solution $u_{\lambda,2}\ll 0$. }
Let $\lambda>\gamma_1$ and $\lambda_0\in \,]\gamma_1, \lambda]$ such that, by Proposition \ref{anti}, the problem 
\begin{equation}
\label{eq 0AM}
- \Delta u = \lambda_0 c(x)u+ 1, \quad u \in H^1_0(\Omega) \cap L^{\infty}(\Omega)
\end{equation}
has a solution $u\ll0$.
This implies  that for $\varepsilon>0$ small enough, the function $\beta_0=\varepsilon u$ satisfies 
$$
- \Delta \beta_0= \lambda_0 c(x)\beta_0+ \varepsilon\geq \lambda_0  c(x)\beta_0 + \mu \varepsilon^2|\nabla u|^2= \lambda_0  c(x)\beta_0 + \mu |\nabla \beta_0|^2
$$
and the problem  $(P_{\lambda_0})$ has an upper solution $\beta_0$  with $\beta_0\leq0$ and $c \beta_0 \lneqq  0$. The result follows by Theorem \ref{thm 3}.
\end{proof}

\section{Complement in case $\mu$ constant}
\label{Section4}

First observe that, in the case $\mu$ constant, we have a necessary and sufficient condition for the existence of a solution of  $(P_0)$.

\begin{prop}
\label{Prop 5.1}
Assume that $(A)$ holds with $\mu$ a positive constant. Then  $(P_0)$  has a solution if and only if
the first eigenvalue $\xi_1(c)$ of the problem
$$
\begin{array}{cl}
-\Delta w-\mu h w = \xi\, c \,  w, &\mbox{ in }\Omega,
\\
w=0,  &\mbox{ on }\partial \Omega,
\end{array}
$$
satisfies $\xi_1(c)>0$.
\end{prop}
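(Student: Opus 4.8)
The key observation is that, since $\mu$ is a positive constant, the change of unknown $w=e^{\mu u}$ (the Hopf--Cole transformation, already used in the proof of Theorem \ref{regularity}) linearises $(P_0)$. I would first record the resulting equivalence: if $u\in H^1_0(\Omega)\cap L^\infty(\Omega)$ solves $(P_0)$, then by Theorem \ref{regularity} $u\in W^{2,p}_0(\Omega)\subset C^1_0(\overline\Omega)$, so $w:=e^{\mu u}\in W^{2,p}(\Omega)$ satisfies $w=1$ on $\partial\Omega$, $w\geq e^{-\mu\|u\|_\infty}>0$, and a direct computation gives $-\Delta w=\mu e^{\mu u}\bigl(-\Delta u-\mu|\nabla u|^2\bigr)=\mu h\,w$. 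Conversely, if $w\in W^{2,p}(\Omega)$ solves
\[
-\Delta w=\mu h(x)\,w \quad\text{in }\Omega,\qquad w=1 \quad\text{on }\partial\Omega,
\]
and $w>0$ in $\Omega$ (hence $\min_{\overline\Omega}w>0$, as $w\in C(\overline\Omega)$ and $w\equiv 1$ on $\partial\Omega$), then $u:=\mu^{-1}\ln w\in C^1_0(\overline\Omega)$ lies in $H^1_0(\Omega)\cap L^\infty(\Omega)$ and solves $(P_0)$. So $(P_0)$ is solvable if and only if the above linear Dirichlet problem has a positive solution.

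Next I would relate this to the principal eigenvalue $\lambda_1$ of $-\Delta-\mu h$ on $H^1_0(\Omega)$ (with weight $1$), with eigenfunction $\phi_1>0$; recall that $\lambda_1>0$ is equivalent to the quadratic form $Q(v):=\int_\Omega\bigl(|\nabla v|^2-\mu h v^2\bigr)\,dx$ being coercive on $H^1_0(\Omega)$. If $\lambda_1>0$, then writing $w=1+\tilde w$ with $\tilde w\in H^1_0(\Omega)$, the problem becomes $\int_\Omega(\nabla\tilde w\cdot\nabla\varphi-\mu h\tilde w\varphi)\,dx=\mu\int_\Omega h\varphi\,dx$ for all $\varphi\in H^1_0(\Omega)$, which has a unique solution by the Lax--Milgram theorem (coercivity of $Q$); by regularity $\tilde w\in W^{2,p}_0(\Omega)\subset C^1_0(\overline\Omega)$. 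Taking $\varphi=w^-\in H^1_0(\Omega)$ (admissible since $w=1$ on $\partial\Omega$) in the weak form of the equation gives $\int_\Omega\bigl(|\nabla w^-|^2-\mu h(w^-)^2\bigr)\,dx=0$, so $w^-\equiv 0$ by coercivity, i.e.\ $w\geq0$; then $-\Delta w+\mu h^- w=\mu h^+ w\geq 0$ with $w\equiv1$ on $\partial\Omega$, and the strong maximum principle yields $w>0$ in $\Omega$. Hence $\lambda_1>0$ implies $(P_0)$ has a solution.

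For the reverse implication, suppose $w>0$ solves the linear Dirichlet problem. Since $\phi_1\in W^{2,p}_0(\Omega)$ (by elliptic regularity applied to $-\Delta\phi_1=(\mu h+\lambda_1)\phi_1$), Green's identity between $w$ and $\phi_1$ gives
\[
\lambda_1\int_\Omega w\phi_1\,dx=-\int_{\partial\Omega}\partial_\nu\phi_1\,dS,
\]
and the right-hand side is strictly positive by Hopf's lemma (valid since $\partial\Omega$ is $C^{1,1}$ and $\phi_1>0$ in $\Omega$), so $\lambda_1>0$. It remains to check $\lambda_1>0\iff\xi_1(c)>0$. If $Q$ is coercive, then for $v$ with $\int_\Omega cv^2\,dx=1$ one has, by the Sobolev embedding and $c\in L^{N/2}(\Omega)$, $\|v\|_{H^1_0}^2\geq\delta_0>0$, hence $Q(v)\geq\delta\delta_0>0$ and so $\xi_1(c)=\inf\{Q(v):\int_\Omega cv^2\,dx=1\}>0$. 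If $\lambda_1\leq0$, then $Q(\phi_1)=\lambda_1\|\phi_1\|_2^2\leq0$ while $\int_\Omega c\phi_1^2\,dx>0$ (as $c\gneqq0$ and $\phi_1>0$), whence $\xi_1(c)\leq0$. Chaining the equivalences yields the proposition.

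The two change-of-variable computations are routine. The step requiring the most attention is the analysis of the linear Dirichlet problem: one must use coercivity of $Q$ exactly where resonance has to be excluded (via Lax--Milgram), invoke the strong maximum principle and Hopf's lemma under the sole $C^{1,1}$ regularity of $\partial\Omega$, and interpret $\xi_1(c)$ through its Rayleigh-quotient characterisation even though the weight $c$ may vanish on part of $\Omega$.
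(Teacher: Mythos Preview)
Your argument is correct and takes a genuinely different route from the paper. The paper proceeds as follows: for the direction ``$(P_0)$ solvable $\Rightarrow \xi_1(c)>0$'' it simply quotes \cite[Remark 3.2]{ArDeJeTa} (which gives coercivity of $Q(v)=\int_\Omega(|\nabla v|^2-\mu h v^2)\,dx$ on $H^1_0$), and for ``$\xi_1(c)>0 \Rightarrow (P_0)$ solvable'' it solves the \emph{homogeneous} Dirichlet problem $-\Delta w-\mu h w=\mu h^+$, $w\in H^1_0(\Omega)$, obtaining $w\geq 0$, and then checks that $\beta=\mu^{-1}\ln(1+w)$ is a (non-negative) upper solution of $(P_0)$, to which Lemma~\ref{lem lower} and Theorem~\ref{sousDe} apply. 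Your approach instead exploits the full Hopf--Cole linearisation to the \emph{inhomogeneous} Dirichlet problem $-\Delta w=\mu h w$, $w|_{\partial\Omega}=1$, and treats both implications by direct linear analysis (Lax--Milgram, Green's identity, Hopf). This is more self-contained (no external reference, no lower/upper solution machinery) and makes the equivalence with $\lambda_1(-\Delta-\mu h)>0$ transparent; the paper's route, in contrast, stays within the framework already developed for the nonlinear problem. One small point worth making explicit in your Green--Hopf step: you first get $\lambda_1\geq 0$ from the trivial inequality $\partial_\nu\phi_1\leq 0$, and only \emph{then} apply Hopf's lemma to $-\Delta\phi_1+\mu h^-\phi_1=(\mu h^++\lambda_1)\phi_1\geq 0$ (now with non-negative zero-order term in $L^p$, $p>N$, where the boundary point lemma is available, cf.\ the remark after Theorem~\ref{bounds1}) to upgrade to $\lambda_1>0$; alternatively, the Picone-type identity $Q(v)=\int_\Omega|\nabla v-(v/w)\nabla w|^2\,dx$ (valid for $v\in H^1_0\cap L^\infty$ since $w\geq e^{-\mu\|u\|_\infty}>0$) gives $\lambda_1\geq 0$ directly and rules out $\lambda_1=0$ because equality would force $\phi_1=cw$, impossible with $\phi_1|_{\partial\Omega}=0$ and $w|_{\partial\Omega}=1$.
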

 
\begin{proof}
By  \cite[Remark 3.2]{ArDeJeTa}, we know that, if $(P_0)$  has a solution then 
$$
\displaystyle \inf_{ \{ u \in H^1_0(\Omega) \mid \,\|u\|_{H^1_0(\Omega)}=1 \}} \,  \displaystyle \int_{\Omega} \left(|\nabla u|^2 -  \mu h(x) u^2 \right) dx >0,
$$
and hence $\xi_1(c)>0$.
\medbreak

On the other hand, if $\xi_1(c)>0$, this implies that the problem
$$
\begin{array}{cl}
-\Delta w-\mu h w = \mu h^+, &\mbox{ in }\Omega,
\\
w=0,  &\mbox{ on }\partial \Omega,
\end{array}
$$
has a positive solution $w$. It is then easy to prove that $\beta=\frac{1}{\mu} \ln(w+1)$ is a positive upper solution of $(P_0)$. As, by Lemma \ref{lem lower}, 
 $(P_0)$ has a lower solution $\alpha\leq \beta$, we conclude by application of Theorem \ref{sousDe}.
\end{proof}

\begin{prop}
\label{justtwo}
Assume that $(A)$ holds with  $\mu$ a positive constant and that there exists a sequence $\{\lambda_n\} \subset \, ]0, + \infty[$ with $\lambda_n \to0$ and two sequences 
$\{u_{\lambda_n}\}$, $\{\tilde{u}_{\lambda_n}\}$ of solutions of $(P_{\lambda_n})$ such that
$$ 
\lambda_n\|u_{\lambda_n}\|_{\infty} \to 0 \quad \mbox{ and } \quad \lambda_n \|\tilde{u}_{\lambda_n}\|_{\infty} \to 0,
$$
as $\lambda_n \to 0$. Then, for any $n \in \N$ sufficiently large, $u_{\lambda_n}=\tilde{u}_{\lambda_n}.$
\end{prop}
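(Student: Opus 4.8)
The plan is to exploit the constancy of $\mu$ through a Cole--Hopf change of variable, which linearises the difference of two solutions, and then to treat that difference as a solution of a linear equation whose zeroth order coefficient is an $L^p$--small perturbation of $\mu h$. Concretely, for a solution $u$ of $(P_{\lambda})$ I would set $w=\tfrac1\mu(e^{\mu u}-1)$, so that $1+\mu w=e^{\mu u}>0$ and $u=\tfrac1\mu\ln(1+\mu w)$; using Theorem \ref{regularity} to work in $W^{2,p}_0(\Omega)$, a direct computation gives that $w$ solves the semilinear problem
$$
-\Delta w=(1+\mu w)\Bigl(\lambda c(x)\,\tfrac1\mu\ln(1+\mu w)+h(x)\Bigr)\quad\text{in }\Omega,\qquad w=0\ \text{on }\partial\Omega.
$$
Write $\Phi(s)=\tfrac1\mu(1+\mu s)\ln(1+\mu s)$, so that $\Phi'(s)=1+\ln(1+\mu s)$, and note that the $\lambda c$--term above is $\lambda c\,\Phi(w)$ and the $h$--term is $h+\mu h\,w$.

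Given the two sequences, let $w_{1,n}$ and $w_{2,n}$ be the transforms of $u_{\lambda_n}$ and $\tilde u_{\lambda_n}$, and set $V_n=w_{1,n}-w_{2,n}$. Subtracting the two equations and using the integral mean value identity
$$
\Phi(w_{1,n})-\Phi(w_{2,n})=V_n\Bigl(1+\int_0^1\ln\bigl((1-t)(1+\mu w_{2,n})+t(1+\mu w_{1,n})\bigr)\,dt\Bigr)=:V_n\,R_n(x),
$$
I obtain that $V_n$ solves the linear Dirichlet problem $-\Delta V_n=\bigl(\lambda_n c(x)R_n(x)+\mu h(x)\bigr)V_n$ in $\Omega$. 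Since $(1-t)(1+\mu w_{2,n})+t(1+\mu w_{1,n})=(1-t)e^{\mu\tilde u_{\lambda_n}}+t\,e^{\mu u_{\lambda_n}}$ is a convex combination of two positive numbers, its logarithm lies between $\mu\min$ and $\mu\max$ of the two exponents, and hence a.e.
$$
|R_n(x)|\le 1+\mu\max\bigl(\|u_{\lambda_n}\|_{\infty},\|\tilde u_{\lambda_n}\|_{\infty}\bigr).
$$

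It follows that the zeroth order coefficient $q_n:=\lambda_n c\,R_n+\mu h$ satisfies
$$
\|q_n-\mu h\|_{L^p}\le\lambda_n\|c\|_{L^p}\bigl(1+\mu\max(\|u_{\lambda_n}\|_{\infty},\|\tilde u_{\lambda_n}\|_{\infty})\bigr)\longrightarrow 0
$$
as $n\to\infty$, using precisely the hypotheses $\lambda_n\|u_{\lambda_n}\|_{\infty}\to0$, $\lambda_n\|\tilde u_{\lambda_n}\|_{\infty}\to0$ together with $\lambda_n\to0$. This is the step in which the assumption is essential: merely $\lambda_n\to0$ would be useless, since the $w_{i,n}$ (hence the $R_n$) need not stay bounded. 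To finish, I would recall that, $(P_0)$ being solvable (cf. Proposition \ref{Prop 5.1} and \cite[Remark 3.2]{ArDeJeTa}), the quadratic form $Q(v)=\int_\Omega\bigl(|\nabla v|^2-\mu h\,v^2\bigr)\,dx$ is coercive on $H^1_0(\Omega)$, say $Q(v)\ge\delta\|v\|_{H^1_0}^2$ with $\delta>0$. Since $p>N/2$, Hölder and Sobolev give $\int_\Omega|q_n-\mu h|\,v^2\,dx\le C\|q_n-\mu h\|_{L^p}\|v\|_{H^1_0}^2$, so for $n$ large the form $v\mapsto\int_\Omega(|\nabla v|^2-q_n v^2)\,dx$ remains positive definite; testing the equation for $V_n$ against $V_n$ yields $\int_\Omega(|\nabla V_n|^2-q_nV_n^2)\,dx=0$, whence $V_n\equiv0$, i.e. $w_{1,n}=w_{2,n}$ and therefore $u_{\lambda_n}=\tilde u_{\lambda_n}$.

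The main obstacle is the uniform $L^p$ control of $R_n$ and the resulting convergence $q_n\to\mu h$ in $L^p$; a secondary point, which one should flag, is that the argument requires $-\Delta-\mu h$ to be coercive on $H^1_0(\Omega)$, i.e. one works in the regime where $(P_0)$ admits a solution (Proposition \ref{Prop 5.1}). One could also avoid the perturbation estimate for the quadratic form and instead, assuming $V_n\not\equiv0$ along a subsequence, normalise $\tilde V_n=V_n/\|V_n\|_\infty$, use the $W^{2,p}$--estimate of Theorem \ref{regularity} to pass to a $C^1_0(\overline\Omega)$--limit $\tilde V\not\equiv0$ solving $-\Delta\tilde V=\mu h\tilde V$, and contradict coercivity; but the direct quadratic-form argument above is cleaner.
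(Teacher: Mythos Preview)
Your proof is correct and follows the same core strategy as the paper: apply the Cole--Hopf transform $w=\tfrac{1}{\mu}(e^{\mu u}-1)$, subtract the two transformed equations, and reduce to showing that the resulting linear equation for the difference has only the trivial solution. The paper obtains exactly your equation (their $\rho_n$ is your $R_n$) and bounds it the same way.

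The only genuine difference is in the last step. The paper observes that $V_n\not\equiv0$ would force $1$ to be an eigenvalue $\xi_i(\lambda_n c\rho_n)$ of $-\Delta-\mu h$ with weight $\lambda_n c\rho_n$, and then uses the strict monotonicity of eigenvalues in the weight (de Figueiredo--Gossez \cite{DG}) together with $\lambda_n\rho_n<\xi_1(c)$ to reach a contradiction. Your argument instead shows $\|q_n-\mu h\|_{L^p}\to 0$ and uses the coercivity of $v\mapsto\int(|\nabla v|^2-\mu h\,v^2)$ directly; this is more elementary (no unique-continuation result needed) and does not rely on the sign of $R_n$ or on $c\ge 0$ to compare weights. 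The paper's route, on the other hand, isolates the precise spectral threshold $\xi_1(c)$, which is conceptually sharper but not needed for the statement. Both proofs use that $(P_0)$ has a solution (equivalently $\xi_1(c)>0$, Proposition~\ref{Prop 5.1}); you flag this explicitly, while the paper invokes it without comment.
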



\begin{proof}
If $u_n$ is a solution of $(P_{\lambda_n})$ by the change of variable $u_n=\frac{1}{\mu}\ln(v_n +1)$ we have that $v_n>-1$ is solution of
\begin{equation}
\label{mod}
\begin{array}{cl}
-\Delta v_n-\mu h v_n = \lambda_n \, c \, (1+  v_n)\ln(1+  v_n) + \mu h, &\mbox{ in }\Omega,
\\
v_n=0,  &\mbox{ on }\partial \Omega.
\end{array}
\end{equation}
Setting $D(\lambda_n) := \|u_n\|_{\infty}$, since $v_n = e^{\mu u_n}-1$ we deduce that $\|v_n\|_{\infty} \leq C(\lambda_n)$ where
$$
C(\lambda_n) = e^{\mu D(\lambda_n)}-1.
$$
Now observe that if we assume that $\lambda_n D(\lambda_n) \to0$ then
$$
\lim_{\lambda_n\to0}\lambda_n (\ln(1+ C(\lambda_n)) +1)  = \lim_{\lambda_n\to0} \lambda_n D(\lambda_n) =0.
$$
As, by Proposition \ref{Prop 5.1}, $\xi_1(c)>0$, there exists $n_0 \in \N$ such that, for all $n \geq n_0$  
$$
\lambda_n (\ln(1+  C(\lambda_n)) +1) < \xi_1(c).
$$

If we assume by contradiction that, for $n \geq n_0$, $u_{\lambda_n}\not=\tilde{u}_{\lambda_n}$ then \eqref{mod} has also two distinct solutions 
$v_{n,1}$ and $v_{n,2}$ and  $w_n=v_{n,1}-v_{n,2}$ is a solution of
\begin{equation}
\label{most}
\begin{array}{cl}
-\Delta w-\mu h w = \lambda_n\, c \, \rho_n(x) \, w, &\mbox{ in }\Omega,
\\
w=0,  &\mbox{ on }\partial \Omega
\end{array}
\end{equation}
with
$$
\begin{array}{rcl}
\rho_n(x)&=&
\displaystyle
\frac{(1+ v_{n,1}(x))\ln(1+ v_{n,1}(x)) -(1+ v_{n,2}(x))\ln(1+ v_{n,2}(x))}{v_{n,1}(x) - v_{n,2}(x)} ,\hspace{4mm}\mbox{}
\\
&&\hspace{84mm} 
\mbox{if } v_{n,1}(x)\not= v_{n,2}(x),
\\
&=& \ln(1+ v_{n,1}(x)) +1,\hfill\mbox{if } v_{n,1}(x)= v_{n,2}(x),
\end{array}
$$
and by assumption $0<\lambda_n \rho_n<\xi_1(c)$.
\medbreak

As \eqref{most} has a nontrivial solution,  we have $\xi_i(\lambda_n c\rho_n)=1$ for some $i\in \mathbb N$.
Moreover, as $\lambda_n \rho_n<\xi_1(c)$,  we know by \cite{DG} that $ 1 = \xi_i(\lambda_n c\rho_n)>\xi_i(c\xi_1(c))=\xi_i(c)/\xi_1(c)$. 
This contradicts that the sequence of eigenvalues $(\xi_i(c))_i$ is strictly increasing and proves the proposition.
\end{proof}

Under the assumption that $\mu$ is constant, the following  lemma gives informations on the set of solutions of $(P_{\lambda})$ for $\lambda >0$ small.

\begin{cor}
Assume that assumption $(A)$ holds with  $\mu$ a positive constant and that $(P_0)$ has a solution $u_0$. Let $\{u_{\lambda_n}\}$ be a sequence of solutions of $(P_{\lambda_n})$  satisfying 
$ \lambda_n\|u_{\lambda_n}\|_{\infty} \to 0$
as $\lambda_n \to 0$. Then we have, for any $n \in \N$ sufficiently large,
\begin{enumerate}
\item[(i)] $u_{\lambda_n}= u_{\lambda_n,1}$ where $u_{\lambda_n, 1}$ is the minimal solution given in Theorem \ref{thmlocal}. 
In particular $u_{\lambda_n} \to u_0$ in $C_0^1(\overline{\Omega})$.
\smallskip
\item[(ii)] $(\lambda_n, u_{\lambda_n})$ belongs to ${\mathcal C}^+$  where ${\mathcal C}^+$ is defined in Theorem \ref{ADJT1}.
\end{enumerate}
\end{cor}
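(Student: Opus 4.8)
The plan is to treat (i) and (ii) separately, relying almost entirely on Proposition \ref{justtwo} together with the degree information produced inside the proofs of Theorems \ref{thmlocal} and \ref{ADJT1}.

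For (i), note first that since $\overline\lambda>0$ (Theorem \ref{thmlocal}), we have $\lambda_n\in\,]0,\overline\lambda[$ for $n$ large, so the minimal solution $u_{\lambda_n,1}$ of Theorem \ref{thmlocal} is well defined and, by that theorem, $u_{\lambda_n,1}\to u_0$ in $C^1_0(\overline\Omega)$. In particular $\{u_{\lambda_n,1}\}$ is bounded in $L^\infty(\Omega)$, hence $\lambda_n\|u_{\lambda_n,1}\|_\infty\to 0$. Applying Proposition \ref{justtwo} to the two sequences $\{u_{\lambda_n}\}$ and $\{u_{\lambda_n,1}\}$ gives $u_{\lambda_n}=u_{\lambda_n,1}$ for $n$ large, and thus also $u_{\lambda_n}\to u_0$ in $C^1_0(\overline\Omega)$.

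For (ii), I would recall from Steps 1--2 of the proof of Theorem \ref{thmlocal} that there exist $\lambda_0>0$, $\varepsilon>0$, $R>0$ such that, with $\mathcal S=\{u\in C^1_0(\overline\Omega)\mid u_0-\varepsilon\ll u\ll u_0+\varepsilon,\ \|u\|_{C^1}<R\}$ (an open set in $C^1_0(\overline\Omega)$ containing $u_0$), the operator $\mathcal M_\lambda$ has no fixed point on $\partial\mathcal S$ and $\deg(I-\mathcal M_\lambda,\mathcal S)=1$ for every $\lambda\in[0,\lambda_0[$, while $u_0$ is the only solution of $(P_0)$ in $\overline{\mathcal S}$. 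Fix $n$ large with $\lambda_n<\lambda_0$. Since $(\lambda,u)\mapsto\mathcal M_\lambda u$ is completely continuous on $[0,\lambda_n]\times\overline{\mathcal S}$ and has no fixed point on $\partial\mathcal S$, the solution set $\Sigma_{\mathcal S}=\{(\lambda,u)\in[0,\lambda_n]\times\mathcal S\mid u=\mathcal M_\lambda u\}$ is compact, and since the degree is the nonzero constant $1$ on every slice $\{\lambda\}\times\mathcal S$, a standard Whyburn-type continuation argument (the one underlying Rabinowitz-type global results) yields a connected component $\mathcal K_n$ of $\Sigma_{\mathcal S}$ meeting both $\{0\}\times\mathcal S$ and $\{\lambda_n\}\times\mathcal S$; in particular $(0,u_0)\in\mathcal K_n$. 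Next I would show that, for $n$ large, $u_{\lambda_n,1}$ is the only solution of $(P_{\lambda_n})$ inside $\mathcal S$: every solution $w\in\mathcal S$ satisfies $\|w\|_\infty\le\|u_0\|_\infty+\varepsilon$, so any sequence of such solutions $w_n$ obeys $\lambda_n\|w_n\|_\infty\to 0$; since $u_{\lambda_n,1}\in\mathcal S$ for $n$ large as well, Proposition \ref{justtwo} applied to $\{w_n\}$ and $\{u_{\lambda_n,1}\}$ forces $w_n=u_{\lambda_n,1}$ eventually, and a subsequence argument rules out a second solution in $\mathcal S$. Hence $(\lambda_n,u_{\lambda_n,1})\in\mathcal K_n$. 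As $\mathcal K_n$ is a connected subset of $\Sigma\cap([0,+\infty[\times C(\overline\Omega))$ containing $(0,u_0)$, and by Theorem \ref{ADJT1} the set $\mathcal C^+$ is the (necessarily unique) component of $\Sigma\cap([0,+\infty[\times C(\overline\Omega))$ through $(0,u_0)$, we conclude $\mathcal K_n\subset\mathcal C^+$, so $(\lambda_n,u_{\lambda_n})=(\lambda_n,u_{\lambda_n,1})\in\mathcal C^+$ for $n$ large.

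The delicate point is the step in (ii) that goes from ``the degree is a nonzero constant on each slice'' to the existence of the connecting continuum $\mathcal K_n$ lying in $\mathcal S$ and, above all, the identification of its trace on the slice $\{\lambda_n\}$. This is exactly where Proposition \ref{justtwo} is indispensable: without a local uniqueness statement near $u_0$ the continuum $\mathcal K_n$ could a priori reach the level $\lambda_n$ at the large solution $u_{\lambda_n,2}$ rather than at the minimal one $u_{\lambda_n,1}$, and then the identification $u_{\lambda_n}=u_{\lambda_n,1}$ of (i) would not close the argument. Everything else is routine: compactness of $\Sigma_{\mathcal S}$ from complete continuity, openness of $\mathcal S$ in $C^1_0(\overline\Omega)$ from the remark following the definition of $\ll$, and the Nagumo estimate of Lemma \ref{lem 2} to keep solutions in the $C^1$-ball of radius $R$.
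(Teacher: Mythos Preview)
Your proof of (i) is exactly the paper's argument: the minimal solutions $u_{\lambda_n,1}$ stay bounded, hence $\lambda_n\|u_{\lambda_n,1}\|_\infty\to 0$, and Proposition \ref{justtwo} forces $u_{\lambda_n}=u_{\lambda_n,1}$.

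For (ii) your argument is correct but considerably more elaborate than necessary. The paper does not rebuild a local continuum via degree theory and a Whyburn separation; it simply exploits the continuum $\mathcal C^+$ that is \emph{already} furnished by Theorem \ref{ADJT1}. Since $\mathcal C^+$ is connected, unbounded, and meets $\{0\}\times C(\overline\Omega)$ only at $(0,u_0)$, its projection on the $\lambda$-axis contains an interval $[0,\delta]$, and one can choose points $(\lambda_n,\overline u_{\lambda_n})\in\mathcal C^+$ with $\overline u_{\lambda_n}$ close to $u_0$ (this is the ``by continuity'' the paper invokes). Then $\lambda_n\|\overline u_{\lambda_n}\|_\infty\to 0$, and part (i) applied to the sequence $\{\overline u_{\lambda_n}\}$ gives $\overline u_{\lambda_n}=u_{\lambda_n,1}=u_{\lambda_n}$, so $(\lambda_n,u_{\lambda_n})\in\mathcal C^+$. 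What this buys is that you never need to discuss Whyburn-type continuation, uniqueness of solutions inside $\mathcal S$, or identification of the slice trace of $\mathcal K_n$: all of that machinery is subsumed by the single observation that any point of $\mathcal C^+$ near $(0,u_0)$ already satisfies the hypothesis of (i). Your route has the virtue of being self-contained (it does not lean on the slightly implicit ``by continuity'' step), but it duplicates work already encoded in the existence of $\mathcal C^+$.
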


\begin{proof}
Since $\{u_{\lambda_n, 1}\}$ satisfies $\lambda_n \|u_{\lambda_n, 1}\|_{\infty} \to 0$ as $\lambda_n \to 0$ it directly follows from Proposition \ref{justtwo} that, for any $n \in \N$ large enough, $u_{\lambda_n} = u_{\lambda_n, 1}$. In particular it follows from Theorem \ref{thmlocal} that $u_{\lambda_n} \to u_0$ in $C_0^1(\overline{\Omega})$. Now by Theorem \ref{ADJT1} we know that, for $n \in \N$ large enough, there exists $\overline{u}_{\lambda_n}$ such that 
$(\lambda_n, \overline{u}_{\lambda_n}) \in {\mathcal C}^+$.  Since, by continuity, we have that $\lambda_n \|\overline{u}_{\lambda_n}\|_{\infty} \to 0$ we deduce by (i) that $\overline{u}_{\lambda_n}= u_{\lambda_n, 1}$. Thus $u_{\lambda_n} = \overline{u}_{\lambda_n} $. 
\end{proof}

Also using again that $\lambda_n \|u_{\lambda_n, 1}\|_{\infty} \to 0$ as $\lambda_n \to 0$, we immediately deduce from Proposition \ref{justtwo} the following result.

\begin{cor} 
Assume that $(A)$ holds with $\mu$ a positive constant.
\begin{enumerate}
\item[(i)] In Theorems \ref{thmlocal}, \ref{thm 1},  \ref{thm 2} and \ref{cas h0} we have that
$$
\liminf_{\lambda \to 0^+} \lambda \|u_{\lambda, 2}\|_{\infty} >0.
$$
\item[(ii)] The bound derived in Theorem \ref{bounds1}, $\|u\|_{\infty} \leq M(\lambda)$ for any solution $u$ of \eqref{EE1a} with 
$$
\limsup_{\lambda \to 0^+}M(\lambda)\lambda \leq C
$$ 
for some $C>0$ is sharp.
\end{enumerate}
\end{cor}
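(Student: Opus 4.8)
The plan is to read off both statements from Proposition~\ref{justtwo}, using that in each of Theorems~\ref{thmlocal}, \ref{thm 1}, \ref{thm 2} and \ref{cas h0} the lower branch $u_{\lambda,1}$ converges in $C^1_0(\overline\Omega)$ as $\lambda\to0^+$ --- to $u_0$ in the first three and to $0$ (with $u_0\equiv0$) in Theorem~\ref{cas h0} --- so that $\{\|u_{\lambda,1}\|_\infty\}$ stays bounded and hence $\lambda\|u_{\lambda,1}\|_\infty\to0$. In all four situations $(P_0)$ has a solution, so Proposition~\ref{Prop 5.1} applies and $\xi_1(c)>0$, which is what makes Proposition~\ref{justtwo} available; this is the only hypothesis of that proposition that needs to be checked beyond the two $\lambda_n\|\cdot\|_\infty\to0$ conditions.

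For part (i) I would argue by contradiction. Suppose that along some sequence $\lambda_n\to0^+$ one has $\lambda_n\|u_{\lambda_n,2}\|_\infty\to0$. Then $\{u_{\lambda_n,2}\}$ and $\{u_{\lambda_n,1}\}$ are both sequences of solutions of $(P_{\lambda_n})$ satisfying $\lambda_n\|u_{\lambda_n,2}\|_\infty\to0$ and $\lambda_n\|u_{\lambda_n,1}\|_\infty\to0$, so Proposition~\ref{justtwo} forces $u_{\lambda_n,2}=u_{\lambda_n,1}$ for all $n$ large. This is incompatible with $\max_{\overline\Omega}u_{\lambda_n,2}\to+\infty$ while $u_{\lambda_n,1}\to u_0$ in $C^1_0(\overline\Omega)$, and the contradiction yields $\liminf_{\lambda\to0^+}\lambda\|u_{\lambda,2}\|_\infty>0$.

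For part (ii) I would specialise to a datum for which $(P_0)$ is solvable --- for instance $h\equiv0$, so that Theorem~\ref{cas h0} (or, for $h$ small, Theorem~\ref{thmlocal}) produces for $\lambda>0$ small a solution $u_{\lambda,2}$ of $(P_\lambda)$, which is precisely \eqref{EE1a} with $a=0$. Part~(i) supplies $c_0>0$ and $\lambda^*>0$ with $\|u_{\lambda,2}\|_\infty\geq c_0/\lambda$ for $\lambda\in\,]0,\lambda^*[$. Since the a priori bound $M(\lambda)$ of Theorem~\ref{bounds1} must dominate $\|u_{\lambda,2}\|_\infty$, we get $M(\lambda)\geq c_0/\lambda$, so $\lambda M(\lambda)$ stays bounded away from $0$; together with $\limsup_{\lambda\to0^+}\lambda M(\lambda)\leq C$ this shows the $1/\lambda$ rate in Theorem~\ref{bounds1} cannot be lowered, i.e.\ the estimate is sharp.

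I do not expect a genuine obstacle: the whole argument is essentially a one-line application of Proposition~\ref{justtwo} once its hypotheses are in place. The only points requiring a moment's care are verifying that the two competing solution sequences both satisfy $\lambda_n\|\cdot\|_\infty\to0$ (automatic for $u_{\lambda_n,1}$ from its $C^1_0$-convergence) and, in the case of Theorem~\ref{cas h0}, noticing that $u_{\lambda,1}\equiv0$ is the relevant ``small'' branch and that $u_0\equiv0$ together with $\xi_1(c)>0$ make Proposition~\ref{justtwo} apply verbatim.
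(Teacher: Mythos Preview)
Your proposal is correct and follows essentially the same approach as the paper, which states only that the result follows immediately from Proposition~\ref{justtwo} together with $\lambda_n\|u_{\lambda_n,1}\|_\infty\to0$. Your write-up simply unpacks this one-line deduction, and your extra care in checking that $(P_0)$ has a solution in all four cases (so that $\xi_1(c)>0$ via Proposition~\ref{Prop 5.1}) is appropriate, since the proof of Proposition~\ref{justtwo} uses this. One small simplification: once Proposition~\ref{justtwo} gives $u_{\lambda_n,1}=u_{\lambda_n,2}$ for large $n$, the contradiction is already immediate from $u_{\lambda,1}\ll u_{\lambda,2}$ (or $u_{\lambda,2}\gneqq0=u_{\lambda,1}$ in Theorem~\ref{cas h0}); you need not invoke the blow-up of $\max u_{\lambda,2}$.
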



\section{Case $N=1$ and open problems}
\label{Section5}

In case $\Omega=[-\frac{T}{2}, \frac{T}{2}]$ i.e. $N=1$ and  $\mu>0$, $c>0$ and $h\not=0$ are constants, we can make a more precise study of the situation.

By the classical change of variable $v=e^{\mu u}-1$, we are reduce to the problem 
\begin{equation}
\label{eq time map}
\begin{array}{c}
-v''-\mu h v=\lambda (v+1) \ln(v+1)+\mu h, \quad\mbox{ in } [-\frac{T}{2}, \frac{T}{2}]
\\[2mm]
v>-1, \quad \mbox{ in } [-\frac{T}{2}, \frac{T}{2}]
\\[2mm]
v(-\frac{T}{2})=0, \quad v(\frac{T}{2})=0.
\end{array}
\end{equation}
It is easy to prove that in case $\lambda=0$  this problem has a solution if and only if $\mu \, h<(\pi/T)^2$ which corresponds to the condition \eqref{cond ArDeJeTa}.

As this problem is autonomous, we can make a phase-plane analysis. There are three different situations: $h>0$ and $\lambda>0$ small, $h>0$ and $\lambda$ large, $h<0$.
\medbreak

\noindent{\bf Case 1: $0<\lambda<2\mu h$.}
In that case the phase plane is given by
\begin{center}
\begin{tikzpicture}[x=15ex, y=10ex]
  \draw[->] (-1.1,0) -- (0.6, 0) node[below]{$v$};
  \draw[->] (0,-1.4) -- (0, 1.4) node[left]{$v'$};
  \begin{scope}[color=red]
    \input{phase-plane_m1_l1.tex}    
  \end{scope}
  \draw[fill] (-1,0) circle(1pt) node[above left]{$-1$};
\end{tikzpicture}
\end{center}
We then see that the only possibility is to have positive solutions. Moreover considering the time map $T_+(a)$ which gives the time for the positive part of the orbit to go 
from $(0,a)$  to $(0,-a)$ with $a>0$, it is easy to prove that 
$$
\lim_{a\to0} T_+(a)=0  \quad \mbox{ and } \quad \lim_{a\to+\infty} T_+(a)=0.  
$$
This implies the existence of  $T_0>0$ such that, for all $T<T_0$, the problem \eqref{eq time map} has two solutions and, 
for $T>T_0$ the problem \eqref{eq time map} has no solution. Numerical experiment shows that the count is exact.

This corresponds to what we prove in Theorem \ref{thm 1} together with \cite[Lemma 6.1]{ArDeJeTa} where it is shown that, in case $h\gneqq 0$, for all $\lambda <\gamma_1$, 
every solution of $(P_{\lambda})$ is non-negative.
\smallbreak

{\bf Open problem 1}
Can we prove that, for all $\lambda <\gamma_1$, every solution of $(P_{\lambda})$ is non-negative under the sole condition that $(P_0)$ has a solution $u_0$ with 
$u_0\geq 0$ and $cu_0\gneqq 0$?
\smallbreak

{\bf Open problem 2} 
Can we prove, under the assumptions of Theorem \ref{thm 1} or even under the assumptions of Theorem \ref{thmlocal}, that, for all $\lambda <\gamma_1$, we have at most two solutions? 
\medbreak


\noindent{\bf Case 2:  $\lambda>2\mu h>0$.}
In that case the phase plane is richer and is given by
\begin{center}
\begin{tikzpicture}[x=26ex, y=10ex]
  \draw[->] (-0.3,0) -- (0.6, 0) node[below]{$v$};
  \draw[->] (0,-1.4) -- (0, 1.4) node[left]{$v'$};
  \begin{scope}[color=red]
    \input{phase-plane_m1_l4.tex}    
  \end{scope}
  \draw[fill] (-0.25, 0) circle(1pt) node[above left]{$-1$};
\end{tikzpicture}
\end{center}
We see the possibilities of positive solutions but also of negative or sign-changing ones.

We can prove that if $\mu\,h\geq (\pi/T)^2$ or $\lambda\geq (\pi/T)^2$ then the problem \eqref{eq time map} has no  non-negative solutions i.e. the time $T_+(a)$   for the positive part of the orbit to go from $(0,a)$  to $(0,-a)$ with $a>0$ is too short with respect to the length of the interval we consider.

For what concerns negative or sign-changing solutions, we see that, if we denote by $T_0$ the time needed by the solution with $\max_{]-\frac{T}{2}, \frac{T}{2}[} u = 0$ 
to make a turn in the  phase plane, then for $T>T_0$, there is a negative solution as well as a sign-changing one. This is the situation studied in Theorem \ref{thm 3}.

But for $T>kT_0$ we have also solutions making $k$ turns in the phase plane.
\smallbreak

{\bf Open problem 3}
Can we prove in Theorem \ref{thm 3} that the second solution changes sign?
\smallbreak

{\bf Open problem 4}
Can we prove that in a small interval below $\underline \lambda$ in Theorem \ref{thm 3}, the problem $(P_{\lambda})$ has no solution and that 
$u_{\underline\lambda}\leq 0$ but $u_{\underline\lambda}\not\ll 0$ ?
\smallbreak

{\bf Open problem 5}
Can we prove the existence of more then two solutions for $\lambda$ large?   Is there a link with the spectrum of the problem 
\begin{equation}
\label{eigenvaluepp}
-\Delta \varphi_{1} = \gamma c (x) \varphi_{1},  \quad \varphi_1 \in H^1_0(\Omega) ?
\end{equation}

\pagebreak

\noindent{\bf Case 3:  $h<0$.}
In that case, the phase portrait is given by
\begin{center}
\begin{tikzpicture}[x=26ex, y=10ex]
  \draw[->] (-0.3,0) -- (0.6, 0) node[below]{$v$};
  \draw[->] (0,-1.4) -- (0, 1.4) node[left]{$v'$};
  \begin{scope}[color=red]
    \input{phase-plane_m-1_l4.tex}    
  \end{scope}
  \draw[fill] (-0.25, 0) circle(1pt) node[above left]{$-1$};
\end{tikzpicture}
\end{center}
and we see that we have always a negative solution. Moreover, if we denote by $T_1$ the time needed by the solution with $\min_{]-\frac{T}{2}, \frac{T}{2}[} u = 0$ 
to make a turn in the  phase plane, then, for $T< T_1$ the problem \eqref{eq time map} has a positive solution (as again, considering the time map $T_+(a)$ which gives the time for the positive part of the orbit to go  
from $(0,a)$  to $(0,-a)$ with $a>0$, we have $\displaystyle \lim_{a\to+\infty} T_+(a)=0$)  and for $T>T_1$ we have a sign-changing solution.
This is the situation considered in Theorem \ref{thm 2}.
\smallbreak

{\bf Open problem 6}
Can we prove in Theorem \ref{thm 2} that the second solution is positive for $\lambda>0$ small and changes sign for $\lambda$ large?
\smallbreak

Moreover,  for $T>kT_1$ we have also solutions making $k$ turns in the phase plane.
\smallbreak

{\bf Open problem 7}
As in open problem 5, can we prove the existence of more then two solutions for $\lambda$ large?
\smallbreak

In addition to the above open problems directly induced by the phase plane analysis, we also propose the following questions.
\smallbreak

{\bf Open problem 8}
Can we give a more precise characterization of the situation in case $h$ changes sign or $u_0$  changes sign?
\smallbreak

{\bf Open problem 9}
 In \cite{So} some a priori bounds for non-negative solutions have been derived without assuming that $\mu(x) \geq \mu_1 >0$. 
Can a similar result be obtained in the general case ? 
\smallbreak

{\bf Open problem 10}
 In \cite{ArDeJeTa}, the results are obtained under less regularity assumptions ($c$, $h\in L^p(\Omega)$ with $p>N/2$). In \cite{AbPePr}, the regularity is even weaker. If some of our results are still valid when (A) is weakened, how dependent is the structure of the set of solutions of our regularity assumption ? 

\section{Appendix : Proof of Theorem \ref{sousDe}.} 

\label{appendix}

Let us denote $\alpha := \max\{\alpha_i\mid 1\leq i \leq k\}$ where $\alpha_1, \ldots,  \alpha_k$ are regular lower solutions of \eqref{De-He} and   
$\beta=\min\{\beta_j\mid 1\leq j \leq l\}$ where $\beta_1, \ldots,  \beta_l$ are regular upper solutions of \eqref{De-He}.
The proof is divided into   three parts.

\smallbreak
\noindent
{\it Part 1. Existence of a solution $u$ of
\eqref{De-He} with $\alpha\leq u\leq\beta$.}  
Observe that by Lemma \ref{lem 2},  there
exist   $R>0$ such that, for every function $f$ satisfying \eqref{Hf}
and every solution $u$ of  \eqref{De-He} with
$\alpha\leq u \leq \beta$, we have
\begin{equation}
\label{3-5.2}
\|u\|_{W^{2,p}}<R \qquad \mbox{ and }\qquad \|u\|_{C^1_0}<R.
\end{equation}
\smallbreak

\noindent
\textit{Step 1.  Construction of a modified problem.}
Take $\overline R$ such that  
$$
\overline R>  \max\{R, \max_{1\leq i\leq
k}\|\alpha_i\|_{C^1},
\max_{1\leq j\leq l}\|\beta_j\|_{C^1}\}
$$
and set for a.e. $x \in \Omega$
and every $(s, \xi) \in \mathbb R \times \mathbb R^N$,
$$
\overline f (x,s,\xi) = \left\{
\begin{array}{ll}
f(x,s,\xi),&\mbox{if }|\xi|\leq \overline R,
\\
f(x,s,\overline R\frac{\xi}{|\xi|}),&\mbox{if }|\xi|> \overline R. 
\end{array}
\right.
$$
Now we define the functions 
$$
p_i(x,  s, \xi) =
\left\{
\begin{array}{ll}
\overline f(x,  \alpha_i(x), \xi)
+ \omega_{1,i}(x, \alpha_i(x) - s),
&\mbox{ if } s < \alpha_i(x),
\\
\overline f(x, s, \xi),
&\mbox{ if } s \geq \alpha_i(x),
\end{array}
\right.
$$
where
$$
\omega_{1,i}(x,  \delta) = \max_{|\xi| \leq \delta}
|\overline f(x,  \alpha_i(x), \nabla\alpha_i(x) + \xi)-\overline f(x, \alpha_i(x), \nabla\alpha_i(x))|,
$$
and
$$
q_j(x,  s, \xi) =
\left\{
\begin{array}{ll}
\overline f(x,  \beta_j(x), \xi) - \omega_{2,j}(x, s - \beta_j(x)),
&\mbox{ if } s > \beta_j(x),
\\
\overline f(x,  s, \xi),
&\mbox{ if } s \leq \beta_j(x),
\end{array}
\right.
$$
where
$$
\omega_{2,j}(x,  \delta) = \max_{|\xi| \leq \delta}
|\overline f(x,  \beta_j(x), \nabla\beta_j(x) + \xi)-\overline f(x, \beta_j(x), \nabla\beta_j(x))|,
$$
for $i \in \left\{1,...,k\right\}$ and $j \in \left\{1,...,l\right\}$.
At last, we define for a.e. $x \in \Omega$
and every $(s, \xi) \in \mathbb R \times \mathbb R^N$,
\[
F(x,  s, \xi) =
\left\{
\begin{array}{llrr}
\displaystyle
\max_{1 \leq i \leq k} p_i(x,  s, \xi),
&\mbox{ if }
s \leq \alpha(x),
\\[4mm]
\overline f(x,  s, \xi), &
\mbox{ if }
\alpha(x) < s < \beta(x),
\\[2mm]
\displaystyle
\min_{1 \leq j \leq l} q_j(x, s, \xi),
&\mbox{ if }
s \geq \beta(x).
\end{array}
\right.
\]
Then we consider the modified problem
\begin{equation}
\label{3-5.3}
\begin{array}{cl}
-\Delta u
= F(x,  u, \nabla u),
&\mbox{ in } \Omega,
\\
u = 0,
&\mbox{ on } \partial\Omega.
\end{array}
\end{equation}

Notice that  $F$ is a $L^{p}$-Carath\'eodory function and that
there exists $\gamma \in L^{p}(\Omega)$,
such that
$$
|F(x,  s, \xi)| \leq \gamma(x),
$$
for a.e. $x \in \Omega$ and every
$(s, \xi) \in \mathbb R \times \mathbb R^N$.
\medbreak

\noindent {\it Step 2. Every solution $u$ of \eqref{3-5.3}  satisfies $\alpha \leq u  \leq \beta$.} 
Let $u$    be a solution of \eqref{3-5.3}.  Assume by contradiction that $\displaystyle \min_{\overline\Omega}(u-\alpha)<0$. 
Let $i\in \{1, \ldots, k\}$ and  $\overline x \in \overline\Omega$ such that
$$
\min_{\overline\Omega}(u-\alpha)=\min_{\overline\Omega}(u-\alpha_i)=(u-\alpha_i)(\overline x) <0.
$$
Define $v=u-\alpha_i$.
As $v\geq 0$ on $\partial\Omega$ we have
 $\overline x \in  \Omega$. Therefore  $\nabla v(\overline x)=0$ and there is an open ball $B \subseteq \Omega$,
with $\overline x\in  B$  such that,
a.e. in $B$,
$$ 
|\nabla v(x)|\leq|v(x)|, \qquad
v(x)<0
$$ 
and
$$
\begin{array}{rcl}
-\Delta v &\geq& F(x,u(x), \nabla u(x))
- f(x,\alpha_i(x),\nabla\alpha_i(x))
\\
&\geq& \overline f(x,\alpha_i(x), \nabla u(x))
+\omega_{1i}(x,\alpha_i(x)-u(x))
- \overline f(x,\alpha_i(x),\nabla\alpha_i(x))
\\
&\geq& -\omega_{1i}(x,|\nabla v(x)|)
+\omega_{1i}(x,|v(x)|)  
\\
&\geq& 0,
\end{array}
$$
as $\omega_{1i}(x,\cdot)$ is increasing and $|v(x)|\geq |\nabla v(x)|$. This contradicts the strong maximum principle.
\medbreak

Similarly, one proves that $u \leq \beta$. 
\medbreak

\noindent {\it Step 3. Every solution of  \eqref{3-5.3} is
a solution of \eqref{De-He} and satisfies  $\alpha\leq u \leq
\beta$.} 
In Step 2, we proved that
every solution $u$ of \eqref{3-5.3} satisfies $\alpha\leq u \leq
\beta$ and hence is a solution of
$$
\begin{array}{cl}
-\Delta u = \overline f(x,u, \nabla u), & \mbox{in } \Omega,
\\
u=0,& \mbox{on } \partial\Omega.
\end{array}
$$
As $\overline f$ satisfies  \eqref{Hf}, we have
$\|u\|_{C^1(\overline\Omega)} < R$ and hence  $u$ is
a solution of \eqref{De-He}.
\smallbreak

\noindent {\it Step 4. Problem \eqref{3-5.3} has at least one
solution.} Let us consider the solution operator $\overline {\mathcal M}:
C^1(\overline\Omega) \to C^1(\overline\Omega)$
associated with \eqref{3-5.3},  which sends any function $u\in
C^1(\overline\Omega)$ onto the unique solution $v\in
W^{2,p}(\Omega)$ of
$$
\begin{array}{cl}
-\Delta v  = F(x,u, \nabla u), &  \mbox{in }\Omega,
\\
v=0,& \mbox{on } \partial\Omega.
\end{array}
$$
The operator $\overline {\mathcal M}$ is continuous, has a relatively compact range and its   fixed
points are the solutions of \eqref{3-5.3}. Hence there exists a constant  $ \overline R>0$, that we can suppose larger than $R$, such that,
for every $u \in C^1(\overline\Omega)$, 
$$
\|\overline {\mathcal M}u\|_{C^1(\overline\Omega)} <  \overline  R,
$$
and   hence (see, e.g., \cite{Ze86})
\begin{equation}
\label{3-5.4}
\deg (I-\overline {\mathcal M},B(0,\overline R)) = 1,
\end{equation}
where $I$ 
is the identity operator in $C^1(\overline\Omega)$ and $B(0,\overline R)$ is the open ball of center
$0$ and radius $\overline R$ in $C^1(\overline\Omega)$.  Therefore   
$\overline {\mathcal M}$ has a fixed point and problem \eqref{3-5.3} has at least one
solution. 
\medbreak

\noindent {\it Step 5. Problem \eqref{De-He} has at least one
solution.} By Step 4, we get the existence of a solution $u$ of the problem \eqref{3-5.3} 
and Step 2 implies that $u$ is a solution  of \eqref{De-He}
satisfying $\alpha\leq u \leq \beta$.
\medbreak

\noindent{\it Part 2. Existence of extremal solutions.} We know, from  Part 1,  
that the solutions $u$ of \eqref{De-He}, with $\alpha\leq u \leq
\beta$, are precisely  the fixed points of the solution operator   $\overline {\mathcal M}$    associated with  \eqref{3-5.3}. Set
$$
{\mathcal H} =\{u\in C^1(\overline\Omega)\mid u=\overline {\mathcal M}u\}.
$$
${\mathcal H}$   is a non-empty compact subset of   $C^1(\overline\Omega)$. Next,  for each $u\in {\mathcal H}$,  
define the closed set  ${\mathcal C}_u=\{z\in{\mathcal H} \mid z\leq u\}$. 
The  family $\{{\mathcal C}_u \mid u\in{\mathcal H}\}$ has the finite intersection property, as it follows from    Part 1 observing that  
if $u_1,  u_2\in{\mathcal H}$, 
then 
$\min\{u_1,u_2\}$ is an upper solution of \eqref{3-5.3} with $\alpha \le \min\{u_1,u_2\} $. 
Hence ${\mathcal C}_{u_1}\cap {\mathcal C}_{u_2}\not=\emptyset$.
By  the compactness of ${\mathcal H}$ there exists 
$v\in\bigcap_{u\in{\mathcal H}}{\mathcal C}_u$; clearly, $v$ is 
  the minimum solution in $[\alpha, \beta]$ of \eqref{De-He} in $\overline\Omega$.
\medbreak

\noindent {\it Part 3. Degree computation.} Now, let us assume that
$\alpha$ and $\beta$ are strict lower and   upper solutions
respectively. Since there exists a solution $u$ of \eqref{De-He}, 
which satisfies $\alpha\leq u \leq \beta$,  and every such  solution satisfies $\alpha
\ll u \ll \beta$, it follows that $\alpha
\ll  \beta$. Hence ${\mathcal S}$ is a non-empty open
set in $C^1(\overline\Omega)$ and there is no fixed point  either of ${\mathcal M}$ or of $\overline {\mathcal M}$  on its boundary  $\partial{\mathcal S}$.
Moreover, by  \eqref{3-5.2}, the sets of fixed points of ${\mathcal M}$ and $\overline {\mathcal M}$
coincide on ${\mathcal S}\cap B(0,R)$ and we have
$$
\deg(I-{\mathcal M}, {\mathcal S}\cap B(0, R)) =\deg(I-\overline {\mathcal M}, {\mathcal S}\cap B(0, R)).
$$
Furthermore, by the excision property of the 
degree (see, e.g., \cite{Ze86}),  we get from \eqref{3-5.2} and \eqref{3-5.4}  
$$
\deg(I-\overline {\mathcal M}, B(0,  R))=1.
$$
Finally, since all fixed points of $\overline {\mathcal M}$ are
in ${\mathcal S}\cap B(0, R)$, we conclude
$$
\deg(I-{\mathcal M}, {\mathcal S}\cap B(0, R))
=\deg(I-\overline {\mathcal M}, {\mathcal S}\cap B(0,
R))
=\deg(I-\overline {\mathcal M}, B(0,  R)) =1.
$$ 
This ends the proof.

\end{document}